\def\cite{\citet}
\numberwithin{equation}{section}
\def\@noindentfalse{\global\let\if@noindent\iffalse}
\def\@noindenttrue {\global\let\if@noindent\iftrue}
\def\@aftertheorem{%
  \@noindenttrue
  \everypar{%
    \if@noindent%
      \@noindentfalse\clubpenalty\@M\setbox\z@\lastbox%
    \else%
      \clubpenalty \@clubpenalty\everypar{}%
    \fi}}
\theoremstyle{plain}
\newtheorem{theorem}{Theorem}[section]
\newtheorem{lemma}[theorem]{Lemma}
\newtheorem{corollary}[theorem]{Corollary}
\newtheorem{proposition}[theorem]{Proposition}
\theoremstyle{definition}
\newtheorem{remark}[theorem]{Remark}
\bf\mathversion{bold}}{\thesubsection\kern1em}{0pt}{}
\bf\mathversion{bold}}{}{0pt}{}
\def\note#1{\par\smallskip%
\noindent\kern-0.01\hsize%
\setlength\fboxrule{0pt}\fbox{\setlength\fboxrule{0.5pt}\fbox{%
\llap{$\boldsymbol\Longrightarrow$ }%
\vtop{\hsize=0.98\hsize\parindent=0cm\small\rm #1}%
\rlap{$\enskip\,\boldsymbol\Longleftarrow$}
}%
}%
}
\def\given{\mskip 0.5mu plus 0.25mu\vert\mskip 0.5mu plus 0.15mu}
\newcounter{bracketlevel}%
\def\@bracketfactory#1#2#3#4#5#6{%
\expandafter\def\csname#1\endcsname##1{%
\global\advance\c@bracketlevel 1\relax%
\global\expandafter\let\csname @middummy\alph{bracketlevel}\endcsname\given%
\global\def\given{\mskip#5\csname#4\endcsname\vert\mskip#6}\csname#4l\endcsname#2##1\csname#4r\endcsname#3%
\global\expandafter\let\expandafter\given\csname @middummy\alph{bracketlevel}\endcsname%
\global\advance\c@bracketlevel -1\relax%
}%
}
\def\bracketfactory#1#2#3{%
\@bracketfactory{#1}{#2}{#3}{relax}{0.5mu plus 0.25mu}{0.5mu plus 0.15mu}
\@bracketfactory{b#1}{#2}{#3}{big}{1mu plus 0.25mu minus 0.25mu}{0.6mu plus 0.15mu minus 0.15mu}
\@bracketfactory{bb#1}{#2}{#3}{Big}{2.4mu plus 0.8mu minus 0.8mu}{1.8mu plus 0.6mu minus 0.6mu}
\@bracketfactory{bbb#1}{#2}{#3}{bigg}{3.2mu plus 1mu minus 1mu}{2.4mu plus 0.75mu minus 0.75mu}
\@bracketfactory{bbbb#1}{#2}{#3}{Bigg}{4mu plus 1mu minus 1mu}{3mu plus 0.75mu minus 0.75mu}
}
\let\original@left\left
\let\original@right\right
\renewcommand{\left}{\mathopen{}\mathclose\bgroup\original@left}
\renewcommand{\right}{\aftergroup\egroup\original@right}
\newcounter{ctr}\loop\stepcounter{ctr}\edef\X{\@Alph\c@ctr}%
\edef\csname s\X\endcsname{\noexpand\mathscr{\X}}
\edef\csname c\X\endcsname{\noexpand\mathcal{\X}}
\edef\csname b\X\endcsname{\noexpand\boldsymbol{\X}}
\edef\csname I\X\endcsname{\noexpand\mathbb{\X}}
\let\@IE\IE\let\IE\undefined
\newcommand{\IE}{\mathop{{}\@IE}\mathopen{}}
\newcommand{\E}{\mathop{{}\@IE}\mathopen{}}
\let\@IP\IP\let\IP\undefined
\newcommand{\IP}{\mathop{{}\@IP}\mathopen{}}
\renewcommand{\P}{\mathop{{}\@IP}\mathopen{}}
\newcommand{\Var}{\mathop{\mathrm{Var}}}
\newcommand{\Cov}{\mathop{\mathrm{Cov}}}
\newcommand{\R}{\mathbb{R}}
\def\^#1{\relax\ifmmode {\mathaccent"705E #1} \else {\accent94 #1} \fi}
\def\~#1{\relax\ifmmode {\mathaccent"707E #1} \else {\accent"7E #1} \fi}
\def\*#1{\relax#1^\ast}
\edef\-#1{\relax\noexpand\ifmmode {\noexpand\bar{#1}} \noexpand\else \-#1\noexpand\fi}
\def\>#1{\vec{#1}}
\def\.#1{\dot{#1}}
\def\atop{\@@atop}
\renewcommand{\leq}{\leqslant}
\renewcommand{\geq}{\geqslant}
\renewcommand{\phi}{\varphi}
\newcommand\indep{\protect\mathpalette{\protect\@indep}{\perp}}
\def\@indep#1#2{\mathrel{\rlap{$#1#2$}\mkern2mu{#1#2}}}
\def\parsetime#1#2#3#4#5#6{#1#2:#3#4}
\def\parsedate#1:20#2#3#4#5#6#7#8+#9\empty{20#2#3-#4#5-#6#7 \parsetime #8}
\def\moddate{\expandafter\parsedate\pdffilemoddate{\jobname.tex}\empty}
\crefname{equation}{}{}
\crefname{page}{p.}{pp.}
\newenvironment{equ}
{\begin{equation} \begin{aligned}}
{\end{aligned} \end{equation}}
\newlist{condition}{enumerate}{10}
\setlist[condition]{label*=({A}\arabic*)}
\crefname{conditioni}{condition}{conditions}
\Crefname{conditioni}{Condition}{Conditions}
\def\nin{\not\in}
\DeclareMathOperator{\inj}{inj}
\DeclareMathOperator{\ind}{ind}
\DeclareMathOperator{\Aut}{Aut}
\renewcommand{\leq}{\leqslant}
\renewcommand{\geq}{\geqslant}
\def\func{f}
\def\fung{g}
\newcommand{\mvert}{\,\vert\,}
\renewcommand{\emptyset}{\varnothing}
\renewcommand{\tilde}{\widetilde}
\newcommand{\dd}{\mathop{}\!{d}}
\newcommand{\conep}[2]{\E\left\{ #1 \,\middle\vert\, #2  \right\}}
\tikzset{gon/.style={name=tmp,regular polygon,regular polygon sides=#1,minimum
size=4pt,inner sep=0pt},
polygon side/.style args={#1--#2}{
insert path={(tmp.corner #1)-- (tmp.corner #2)}}}
\newcommand{\FlagGraph}[3][]{\ifnum#2=2%
\tikz[baseline=(tmp1)]{\node[circle,inner sep=0.7pt,fill] (tmp1) at (0,0){};
\node[#1,circle,inner sep=0.7pt,fill] (tmp2) at (5pt,5pt){};
\ifx#3\empty%
\else
\draw[#1] (tmp1) -- (tmp2);
\fi}
\else%
\tikz[baseline=(tmp.south)]{\node[#1,gon=#2]{};
\foreach \X in {1,...,#2}{\fill (tmp.corner \X) circle (1pt);}
\draw[#1,polygon side/.list={#3}]}
\fi}
\def\Gnode{1}
\begin{document}

\title{\sc\bf\large\MakeUppercase{Berry--Esseen bounds for generalized $U$ statistics}}
\author{\sc Zhuo-Song Zhang}

\date{\itshape National University of Singapore}

\maketitle
\providecommand{\keywords}[1]{\textbf{\textit{Keywords:  }}  #1}
\begin{abstract}	
	In this paper, we establish optimal Berry--Esseen bounds for the generalized $U$-statistics. 
	The proof is based on a new Berry--Esseen theorem for exchangeable pair approach by Stein's method under a general linearity condition setting. 
     As applications, an optimal convergence rate of the normal approximation for subgraph counts in Erd\"os--R\'enyi graphs and graphon-random graph is obtained.  \\ 
{\textbf{MSC: } Primnary 60F05; secondary 60K35.}
\\
\keywords{
	{Generalized $U$-statistics},
	{Stein's method},
	{Exchangeable pair approach},
	{Berry-Esseen bound},
	{graphon-generated random graph},
	{Erd\"os-R\'enyi model}.
}
\end{abstract}

\section{Introduction}

Let $X = (X_1, \dots, X_n) \in \mathcal{X}^n$ and $Y = ( Y_{i,j}, 1 \leq i < j \leq n ) \in \mathcal{Y}^{ n(n-1)/2 }$ be two families of i.i.d.\  random variables; moreover, $X$ and $Y$ are also mutually independent and we set $Y_{j,i} =
Y_{i,j}$ for $j > i$. 
For $k \geq 1$, 
let $\func : \mathcal{X}^k \times \mathcal{Y}^{k(k-1)/2} \to \R$ be a function and we say $\func$ is symmetric if the value of the function $ \func(X_{i_1}, \dots, X_{i_k}; Y_{i_1, i_2}, \dots, Y_{i_{k - 1}, i_k})$ remains unchanged for
any permutation of indices $1 \leq i_1 \neq i_2 \neq \dots \neq i_k \leq n$. 
In this paper, we consider the generalized $U$-statistic defined by
\begin{equ}
	S_{n, k}(\func) = \sum_{\alpha \in \mathcal{I}_{n, k}} \func (X_{\alpha(1)}, \dots, X_{\alpha(k)}; Y_{\alpha(1), \alpha(2)}, \dots, Y_{\alpha(k - 1), \alpha(k)}), 
    \label{eq-Snk}
\end{equ}
where for every $\ell \geq 1$ and $n \geq \ell$,  
\begin{align}
	\label{eq-Inl}
	\mathcal{I}_{n, \ell} = \{ \alpha = (\alpha(1), \dots, \alpha(\ell)) : 1 \leq \alpha(1) < \dots < \alpha(\ell) \leq n \}. 
\end{align}
We note that every $\alpha \in \mathcal{I}_{n,\ell}$ is an $\ell$-fold ordered index.  

As a generalization of the classical $U$-statistic,  
generalized $U$-statistics have been widely applied  in the random graph theory as a count random variable.  
\citet{janson1991asymptotic} studied the limiting behavior of $S_{n,k}(f)$ via a projection method. Specifically, the function $\func$ can be represented as an orthogonal sum of terms indexed by subgraphs of the complete graph with $k$ vertices.
\citet{janson1991asymptotic} showed that the limiting behavior of $S_{n,k}(\func)$ depends on topology of the principle support  graphs (see more details in Subsection 2.1) of $\func$. In particular, the random variable $S_{n,k}(\func)$ is asymptotically normally distributed if
the principle support graphs are all connected. However,
the convergence rate is still unknown. 

The main purpose of this paper is to establish a Berry--Esseen bound for $S_n$ by using Stein's method. 
Stein's method is a powerful tool to estimating convergence rates for distributional approximation. Since introduced by \citet{St72} in \citeyear{St72}, Stein's method has shown to be a powerful tool to evaluate distributional distances for dependent random variables. One of the most important techniques in Stein's method is the exchangeable pair approach, which is commonly taken
in computing the Berry--Esseen bound for both normal and nonnormal approximations. We refer to \citet{St86,rinott1997coupling,ChSh2011} and \citet{Sh16I} for more details on Berry--Esseen bound for bounded exchangeable pairs. It is worth mentioning that \citet{Sh19B}
obtained a Berry--Esseen bound for unbounded exchangeable pairs.

Let $W$ be the random variable of interest, and we say $(W, W')$ is an exchangeable pair if $(W,W') \stackrel{d.}{=} (W',W)$. For normal approximation, it is often to assume the following condition holds: 
\begin{align}
	\mathbb{E} \{ W - W' \vert W \} = \lambda (W + R), 
	\label{eq:linear}
\end{align}
where $\lambda  > 0$ and $R$ is a random variable with a small $ \E | R| $. The condition \cref{eq:linear} can be understood as a linear regression condition. 
Although an exchangeable pair can be easily constructed, it may be not easy to verify the linearity condition
\cref{eq:linear} in some applications. 

In this paper, we aim to establish an optimal Berry--Esseen bound for the generalized $U$-statistics by developing a new Berry--Esseen theorem for exchangeable pair approach by assuming a more general condition than \cref{eq:linear}. More
specifically, we replace $W - W'$ in \cref{eq:linear} by a random variable $D$ that is an antisymmetric function of $(X, X')$. 
The new result is given in Section 4.
There are several advantages of our result. Firstly, we propose a new condition more general than \cref{eq:linear} that may be easy to verify. For instance, the condition can be verified by constructing an antisymmetric random variable by the Gibbs sampling method, embedding method,  generalized
perturbative approach and so on. 
Secondly, the Berry--Esseen bound often provides an optimal convergence rate for many practical applications.

The rest of this paper is organized as follows. In Section 2, we give the Berry--Esseen bounds for $S_{n,k}(\func)$. Applications to subgraph counts in $\kappa$-random graphs are given in Section 3. The new Berry--Esseen theorem for exchangeable pair approach under a new setting is established
in Section 4.   We
give the proofs of our main results in Section 5.  
The proofs of other results are postponed to Section 6.

\section{Main results}%
\label{sec:generalized_u_statistics_on_random_graphs}

Let $(X, Y)$, $f$ and $S_{n,k}(f)$ be defined in Section 1. 
For any $\ell \geq 1$, $[\ell] = \{ 1, \dots, \ell\}$ and $[\ell]_2 = \{ (i,j) : 1 \leq i,j \leq \ell\}$.  
Let $A \subset [\ell]$ and let $B \subset [\ell]_2$, and let $X_{A} = (X_i : i \in A)$ and $Y_B = (Y_{i,j} : (i,j) \in B)$. 
Specially, we can simply write $f(X_1, \dots, X_{k}; Y_{1,2}, \dots, Y_{k - 1,k})$ as $f(X_{[k]}; Y_{[k]_2})$.
Let $G_{A,B}$ be the graph with vertex set $A$ and edge set $B$, and let $v_{A,B}$ be the number of nodes in $G_{A,B}$.

By the Hoeffding decomposition, we have 
\begin{align*}
	f(X_{[k]}; Y_{[k]_2})
	= \sum_{A \subset [k] , B \subset [k]_2} f_{A,B}(X_{A}; Y_{B}), 
\end{align*}
where $f_{A,B}: \mathcal{X}^{|A|} \times \mathcal{Y}^{|B|} \to \R$ is defined as 
\begin{multline}
	f_{A,B}(x_{A}; y_{B}) = \sum_{ (A', B') : A' \subset A , B' \subset B } (-1)^{|A| + |B| - |A'| - |B'|} \\
	\times \E \bigl\{ f( X_1, \dots, X_k; Y_{1,2}, \dots, Y_{k - 1, k} ) \bigm\vert X_{A'} = x_{A'}, Y_{B'} = y_{B'} \bigr\}, 
	\label{eq-fAB}
\end{multline}
where $x_{A} = \{ x_i : i \in A \}$ and $y_{B} = \{ y_{i,j} : (i,j) \in B \}$ for $A \subset [k]$ and $B \subset [k]_2$.
We remark that if $A = \emptyset$ and $B = \emptyset$, then $f_{\emptyset,\emptyset}(X_{\emptyset}; Y_{\emptyset}) = \E \{ f(X_{[k]}; Y_{[k]_2}) \}$. 
For $\ell = 0, 1, \dots, k$, let 
\begin{equ}
	f_{(\ell)}(X_{[k]}; Y_{[k]_2}) = 
	\begin{dcases}
		\E \{f(X_{[k]}; Y_{[k]_2})\} & \text{ if $\ell = 0$, }\\
		\sum_{ v_{A,B}= \ell } f (X_A; Y_B) & \text{ if $\ell \geq 1$}, 
	\end{dcases}
    \label{eq-psiell}
\end{equ}
where $v_{A,B}$ is the number of nodes in $G_{A,B}$.
Let $d = \min\{ \ell > 0 : f_{(\ell)} \neq 0\}$, and we call $d$ the \emph{principal degree} of $f$. We say $f_{(d)}$ is the \emph{principal part} of $f$. Moreover, we say the subgraphs $G_{A,B}$ such that $v_{A,B} = d$ and $f_{A,B} \neq 0$ are the \emph{principal support graphs} of $f$.

The central limit theorems for $S_{n,k}(f)$ is proved by \citet{janson1991asymptotic}. 
Let $\sigma_{A,B} = \lVert f_{A, B} (X_A; Y_B) \rVert$, and let $\mathcal{G}_{f,d} = \{G_{A,B}:  \sigma_{A,B} \neq 0, v_{A,B} = d\}$ be the set of principal index graph. 
We remark that if $f$ has the principal degree $d$, then $\Var (S_{n,k}(f))$ is of order $n^{2k-d}$, see Lemmas 2 and 3 in \citet{janson1991asymptotic}.
\citet{janson1991asymptotic} proved that if all graphs in $\mathcal{G}_f$ are connected, then 
\begin{align*}
	\frac{ S_{n,k}(\func) - \E \{ S_{n,k}(\func) \} }{ (\Var ( S_{n,k}(\func) ))^{1/2} } \stackrel{d.}{\to} N(0, 1). 
\end{align*}
Note that if not all principal support graphs are connected, then the limiting distribution of the scaled version of $S_{n,k}$ is nonnormal (see Theorems 2 and 3 in \citet{janson1991asymptotic}), and we will consider this case in another paper.  


Now, assume that $\func$ is a symmetric function having principal degree $d$ ($1 \leq d \leq k$).
In this subsection, we give a Berry--Esseen bound for $S_{n,k}(\func)$. 
For $x \in \mathcal{X}$, let  
\begin{align*}
	f_{\Gnode}(x) \coloneqq f_{\{1\}, \emptyset}(x) =  \E \{\func(X_{[k]}; Y_{[k]_2}) \mvert X_1 = x\} - \E \{ \func ( X_{[k]}; Y_{[k]_2} ) \}. 
\end{align*}
If $\lVert f_{\Gnode}(X_1) \rVert_2 > 0$, then it follows that $d = 1$. 
Here and in the sequel, we denote by $\|Z\|_p \coloneqq (\E |Z|^p)^{1/p}$ for $p > 0$ and we denote by $\Phi(\cdot)$ the distribution function of $N(0,1)$.   
The following theorem provides the Berry--Esseen bound for $S_{n,k}(\func)$ in the case where $\lVert f_{\Gnode}(X_1) \rVert_2 > 0$.
\begin{theorem}
	\label{thm-3.1}
	If $\sigma_{\Gnode} \coloneqq \lVert f_{\Gnode}(X_1) \rVert_2 > 0$, then 
	\begin{equ}
		\sup_{z \in \R} \biggl\lvert \P \biggl[ \frac{S_{n,k}(f) - \E \{S_{n,k}(f)\}}{ \sqrt{\Var \{ S_{n,k}(f) \}} }  \leq z \biggr] - \Phi(z) \biggr\rvert  \leq  \frac{12k \lVert f(X_{[k]}; Y_{[k]_2}) \rVert_4^{2}}{\sqrt{n}  \sigma_{\Gnode}^2} .
		\label{t3.1-ab}
	\end{equ}
\end{theorem}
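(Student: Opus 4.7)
The plan is to apply the general Berry--Esseen theorem for exchangeable pairs (stated in Section~4) to an exchangeable pair obtained by resampling the variables attached to one random vertex. Because $\sigma_{\Gnode}>0$ forces the principal degree $d=1$, the linear projection $\binom{n-1}{k-1}\sum_{i=1}^{n} f_{\Gnode}(X_i)$ dominates $S_{n,k}(\func)$, producing the target $n^{-1/2}$ rate. Concretely, choose $I$ uniformly on $[n]$ independently of $(X,Y)$, let $X_I'$ be an independent copy of $X_I$ and $\{Y_{I,j}': j\neq I\}$ independent copies of $\{Y_{I,j}\}$, and define $(X^{*},Y^{*})$ by substituting these primed variables for $X_I$ and $Y_{I,\cdot}$. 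Write $S_{n,k}^{*}(\func)$ for the statistic on $(X^{*},Y^{*})$, $\sigma_n^{2}=\Var(S_{n,k}(\func))$, and set $W=(S_{n,k}(\func)-\E\{S_{n,k}(\func)\})/\sigma_n$, $W^{*}=(S_{n,k}^{*}(\func)-\E\{S_{n,k}(\func)\})/\sigma_n$, $D=W-W^{*}$. Then $(W,W^{*})$ is exchangeable and $D$ is antisymmetric.

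Next, decompose $S_{n,k}(\func)-\E\{S_{n,k}(\func)\}=\sum_{\ell=1}^{k} T_\ell$ where $T_\ell=\sum_{\alpha}\sum_{(A,B):v_{A,B}=\ell} f_{A,B}(X_{\alpha(A)};Y_{\alpha(B)})$ collects the Hoeffding components of ``graph size'' $\ell$. Only indices $\alpha\ni I$ contribute to $S_{n,k}(\func)-S_{n,k}^{*}(\func)$; moreover, the defining mean-zero property of each $f_{A,B}$ in each of its arguments makes any $f_{A,B}^{*}$-summand with $I\in\alpha(A)$ vanish conditionally. Averaging over the uniform $I$ yields $\E\{T_\ell-T_\ell^{*}\mvert X,Y\}=(\ell/n)T_\ell$ term by term, and summing produces the generalized linearity condition of Section~4,
\[
\E\{D\mvert X,Y\}=\frac{1}{n}\bigl(W+R\bigr),\qquad R=\frac{1}{\sigma_n}\sum_{\ell=2}^{k}(\ell-1)T_\ell.
\]
By Lemma~2 of \citet{janson1991asymptotic}, under principal degree~$1$ we have $\Var(T_\ell)=O(n^{2k-\ell})$ while $\sigma_n^{2}\asymp n^{2k-1}\sigma_{\Gnode}^{2}$, hence $\|R\|_2=O(n^{-1/2})$.

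For the moment bounds, the exchangeable-pair identity gives $\E D^{2}=2\lambda\Var W + o(1/n)=2/n+o(1/n)$, fixing $\|D\|_2$. Applying Minkowski's inequality to the $\binom{n-1}{k-1}$ summands of $\sigma_n D$ together with $\sigma_n\asymp n^{k-1/2}\sigma_{\Gnode}$ gives $\|D\|_4\lesssim\|\func\|_4/(\sqrt{n}\,\sigma_{\Gnode})$. Cauchy--Schwarz then yields
\[
\E|D|^{3}\leq \|D\|_2\,\|D\|_{4}^{2} \lesssim \frac{\|\func\|_{4}^{2}}{n^{3/2}\sigma_{\Gnode}^{2}},
\]
so $\lambda^{-1}\E|D|^{3}\lesssim \|\func\|_{4}^{2}/(\sqrt{n}\,\sigma_{\Gnode}^{2})$. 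Feeding $\lambda=1/n$, $\|R\|_1\leq\|R\|_2=O(n^{-1/2})$, and the bound on $\E|D|^{3}$ into the Berry--Esseen theorem of Section~4 produces \cref{t3.1-ab}, with the explicit constant $12k$ absorbing the universal constants of that theorem together with the combinatorial factors arising from $\sum_\ell \ell T_\ell$.

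The crux is the second step: verifying the generalized linearity identity with the sharp scaling $\lambda=1/n$ and isolating a remainder supported only on the non-principal Hoeffding components. This requires tracking how many $\alpha\ni I$ attach to each $(A,B)$ and using Hoeffding orthogonality to convert the per-$\ell$ replacement scaling into the clean factor $\ell/n$, then rewriting $\sum_\ell \ell T_\ell = \sigma_n W + \sigma_n R$. A secondary but essential difficulty is ensuring that $\|\func\|_4^{2}$ (rather than $\|\func\|_4^{3}$) appears in the final rate: this is exactly what the Cauchy--Schwarz split $\E|D|^3\leq\|D\|_2\|D\|_4^{2}$ achieves once the variance identity $\E D^{2}\asymp 2\lambda$ is in hand.
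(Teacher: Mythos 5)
Your reduction of the problem to the generalized linearity condition is fine: resampling a uniformly chosen vertex $I$ together with its incident edge variables does give, via the Hoeffding degeneracy of \cref{lem-psiG}, the identity $\E\{D\mid X,Y\}=\tfrac1n(W+R)$ with $R=\sigma_n^{-1}\sum_{\ell\ge2}(\ell-1)T_\ell$ and $\E|R|=O(n^{-1/2})$. The genuine gap is in the final step, where you feed $\lambda^{-1}\E|D|^3$ and $\E|R|$ into ``the Berry--Esseen theorem of Section~4''. No such form of the theorem exists: the bound in \cref{thm01} (and \cref{cor-nonlinear}) consists of $\E\bigl\lvert 1-\tfrac{1}{2\lambda}\E\{D\Delta\mid W\}\bigr\rvert$ and $\tfrac{1}{\lambda}\E\bigl\lvert\E\{D^*\Delta\mid W\}\bigr\rvert$, and neither term is controlled by $\lVert D\rVert_2$, $\lVert D\rVert_4$ or $\E|D|^3$. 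Your computation $\E D^2\approx 2\lambda$ only pins down the \emph{mean} of $\tfrac{1}{2\lambda}\E\{D\Delta\mid W\}$; the first term requires a bound on its fluctuation, i.e.\ on $\Var\bigl(\E\{D\Delta\mid X,Y\}\bigr)$, which with your pair is a nontrivial jackknife-type variance over the $n$ possible resampled vertices. Worse, for the second term the crude estimate with $D^*=|D|$, $\Delta=D$ gives only $\tfrac{1}{\lambda}\E D^2\approx 2$, a constant; its smallness comes entirely from the cancellation $\E\{|D|\Delta\}=0$ surviving the conditioning, which again demands a covariance computation of the type carried out in the paper's Lemmas~5.4--5.5 for the $d\ge2$ case. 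Your Cauchy--Schwarz step $\E|D|^3\le\lVert D\rVert_2\lVert D\rVert_4^2$ is correct arithmetic but bounds a quantity that does not appear in the theorem, so the proof does not close as written. (For the Kolmogorov metric one cannot in general trade these conditional-expectation terms for a third-moment term without an additional concentration argument.)

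It is worth contrasting this with the route the paper actually takes, which avoids these computations for the $d=1$ case. The paper splits $T=W+U$ where $W$ is only the H\'ajek projection $\sigma_n^{-1}\binom{n-1}{n-k}\sum_i f_{\Gnode}(X_i)$ and $U$ collects all higher-order components, constructs the exchangeable pair by resampling $X_I$ alone, and takes $D=\Delta=\sigma_n^{-1}\binom{n-1}{n-k}(f_{\Gnode}(X_I)-f_{\Gnode}(X_I'))$. Then \cref{eq-corcon} holds exactly with $\lambda=1/n$ and $R=0$, the two conditional-expectation terms reduce to variances of i.i.d.\ sums (hence trivially $O(n^{-1/2})$), and the nonlinear part is handled through the extra terms $\E|U|$ and $\lambda^{-1}\E|D(U-U^{(i)})|$ of \cref{cor-nonlinear}, bounded in \cref{lem3.3}. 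Your construction (full statistic, vertex-plus-edges resampling, nonzero $R$) could in principle be completed, but only by supplying the missing variance bounds on $\E\{D\Delta\mid X,Y\}$ and $\E\{|D|\Delta\mid X,Y\}$, which is precisely the hard part your proposal skips; the explicit constant $12k$ would also then have to be re-derived rather than ``absorbed''.
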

\begin{remark}
	We remark that $\Var ( S_{n,k}(f) ) = O(n^{2k-1})$ as $n \to \infty$. Typically, the right hand side of \cref{t3.1-ab} is of order $n^{-1/2}$. Specially, if $f(X_{[k]},Y_{[k]_2}) = h(X_{[k]})$ for some symmetric function $h : \mathcal{X}^k \to \R$, then $S_{n,k}$ is the classical $U$-statistic. In this case, \citet{Ch07N} obtained a
	Berry--Esseen bound of order $n^{-1/2}$ under the assumption that $\lVert h(X_{[k]}) \rVert_3 < \infty$.
\end{remark}

If $\sigma_{\Gnode} = 0$, then $d \geq 2$, that is, the principal degree of $f$ is at least $2$. 
We have the following theorem. 
\begin{theorem}
	\label{thm-3.0}
	Let $\tau \coloneqq  \lVert \func(X_{[k]}; Y_{[k]_2}) \rVert_4 < \infty$ and let $\sigma_{\min} \coloneqq \min ( \sigma_{A,B} : G_{A,B} \in \mathcal{G}_{f,d} )$. Assume that $\func$ is a symmetric function having principal degree $d$ for some $2 \leq d \leq k$, and assume further
	that for all  graphs in $\mathcal{G}_{f,d}$ are connected. Then, we have 
	\begin{align*}
		\sup_{z \in \R} \biggl\lvert \P \biggl[  \frac{(S_{n,k}(\func) - \E \{ S_{n,k}(\func) \} )}{\sqrt{\Var \{ S_{n,k}( \func ) \}}} \leq z  \biggr] - \Phi(z) \biggr\rvert \leq C n^{-1/2}, 
	\end{align*}
	where  $C > 0$ is a constant depending only on $k, d, \sigma_{\min}$, and $\tau$. 
\end{theorem}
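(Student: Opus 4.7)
The plan is to apply the generalized exchangeable-pair Berry--Esseen theorem of Section~4, with an antisymmetric auxiliary chosen to isolate the principal-degree part of the Hoeffding decomposition \cref{eq-psiell}. Set $\sigma^2 := \Var\{S_{n,k}(f)\}$ and $W := (S_{n,k}(f) - \E\{S_{n,k}(f)\})/\sigma$. Lemmas~2 and~3 of \citet{janson1991asymptotic}, combined with connectedness of every principal support graph, give $\sigma^2 \asymp n^{2k-d}$ with constants depending only on $k$, $d$, $\sigma_{\min}$. I construct an exchangeable pair by a one-coordinate Gibbs update: draw $I$ uniformly from $[n]$ and replace $X_I$ and $(Y_{I,j})_{j \ne I}$ by independent copies $X'_I$ and $(Y'_{I,j})_{j \ne I}$, keeping all other coordinates fixed; let $W'$ be the recomputed statistic. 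Using the mean-zero property of each Hoeffding component $f_{A,B}$, a direct calculation gives
\[
\E\{W - W' \mvert X, Y\} = \frac{1}{n\sigma} \sum_{\ell=d}^{k} \ell \, S_{n,k}(f_{(\ell)}),
\]
so each Hoeffding order $\ell$ enters with weight $\ell/n$, not the uniform $d/n$ that the standard linearity \cref{eq:linear} demands. This is the obstruction that the Section~4 framework is designed to bypass.

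I would set the antisymmetric proxy to capture only the principal-degree change,
\[
D := \frac{1}{\sigma} \sum_{G_{A,B} \in \mathcal{G}_{f,d}} \sum_{\substack{\alpha \in \mathcal{I}_{n,k} \\ I \in \alpha([k])}} \Bigl\{ f_{A,B}\bigl(X_{\alpha(A)}; Y_{\alpha(B)}\bigr) - f_{A,B}\bigl(X'_{\alpha(A)}; Y'_{\alpha(B)}\bigr) \Bigr\},
\]
where the primed variables denote the arguments after the Gibbs update (so they differ from the unprimed ones only at coordinates touching $I$). By construction $D$ is antisymmetric under $(X,Y) \leftrightarrow (X',Y')$, and the Hoeffding mean-zero property gives $\E\{D \mvert X, Y\} = (d/n)\wt W$, where $\wt W := S_{n,k}(f_{(d)})/\sigma$. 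Orthogonality of the Hoeffding components yields $\|\wt W - W\|_2 = O(n^{-1/2})$, and hence $\E\{D \mvert W\} = (d/n)(W + R_*)$ with $\|R_*\|_1 = O(n^{-1/2})$, verifying the generalized linearity condition of Section~4 with $\lambda = d/n$.

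To finish the application, I would verify the remaining moment hypotheses: estimates for $\E D^2$, $\E |D|^3$, $\Var\{\E\{D^2 \mvert W\}\}$, and $\E\{(W - W' - D)^2\}$. Each expands into a double sum over index tuples $(\alpha,\beta)$ both containing $I$, and Hoeffding orthogonality forces nonzero covariance contributions to arise only when $\alpha \cap \beta$ supports a copy of some $G_{A,B} \in \mathcal{G}_{f,d}$. The connectedness of every such principal support graph then pins down the combinatorial count so that the correct power of $n$ appears. Together with the assumption $\|f(X_{[k]}; Y_{[k]_2})\|_4 \leq \tau$ and $\sigma_{\min} > 0$, this yields all bounds with constants depending only on $(k, d, \sigma_{\min}, \tau)$, after which the Section~4 theorem delivers the claimed $Cn^{-1/2}$ rate.

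The main obstacle will be the $L^1$ control of the linearity remainder: while the orthogonality bound $\|\wt W - W\|_2 = O(n^{-1/2})$ is automatic, upgrading it to $\|\E\{\wt W - W \mvert W\}\|_1 = O(n^{-1/2})$ requires ruling out anomalous correlations between the higher-order Hoeffding components and the principal part introduced by the conditioning. The connectedness of every $G \in \mathcal{G}_{f,d}$ enters here through the higher-order variance estimates of \citet{janson1991asymptotic}, which force cross-covariances between orders $\ell > d$ and the principal order to decay by a full factor of $n$ and therefore cannot spoil the $n^{-1/2}$ rate.
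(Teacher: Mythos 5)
Your construction and the linearity check are fine: for the vertex (Gibbs) pair the identity $\E\{D \mvert X, Y\} = (d/n)\,\wt W$ with $\wt W = \sigma^{-1}S_{n,k}(f_{(d)})$ is correct, the observation that the naive pair produces weights $\ell/n$ across Hoeffding orders (so \cref{eq:linear} fails) is exactly the obstruction Section~4 is built to remove, and replacing $\wt W$ by $W + R_*$ with $\E|R_*| \leq \lVert U\rVert_2 = O(n^{-1/2})$ is admissible since \cref{thm01} only charges $\E|R|$. This is a genuinely different construction from the paper, which resamples a single edge variable $Y_{I,J}$, gets the \emph{exact} relation $\E\{D \mvert X,Y\} = \tfrac{2}{n(n-1)}W$ with $R = 0$ for $W = \sigma_n^{-1}S_{n,k}(f_{(d)})$, and then feeds $T = W + U$ into \cref{cor-nonlinear}.

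The gap is in what you propose to verify next. \cref{thm01} and \cref{cor-nonlinear} require bounds on $\E\bigl\lvert 1 - \tfrac{1}{2\lambda}\E\{D\Delta \mvert \cdot\}\bigr\rvert$ and, crucially, on $\tfrac{1}{\lambda}\E\bigl\lvert\E\{D^{*}\Delta \mvert \cdot\}\bigr\rvert$ for a symmetric majorant $D^{*} \geq |D|$; your checklist ($\E D^2$, $\E|D|^3$, $\Var\{\E\{D^2\mvert W\}\}$, $\E\{(W-W'-D)^2\}$) belongs to a different exchangeable-pair theorem and omits the $D^{*}\Delta$ term altogether, which is precisely where the difficulty sits. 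With $\lambda = d/n$ and $D_I, \Delta_I$ of typical size $n^{-1/2}$, the crude bound $\tfrac{1}{\lambda}\E|D\Delta| = O(1)$ is useless, and a third-moment quantity like $\tfrac{1}{\lambda}\E|D|^3$ is not something the theorem accepts; what is needed is $\E\bigl\lvert \sum_{i=1}^{n} |D_i|\Delta_i \bigr\rvert \lesssim n^{-1/2}$, i.e.\ near-orthogonality of the products $|D_i|\Delta_i$ across $i$ \emph{despite} the absolute value. That is the analogue of \cref{lem5.5} (and of \cref{lem5.3} for the $D\Delta$ term), and the paper's proofs of those lemmas lean on the fact that its update changes a single coordinate $Y_{i,j}$, so the relevant products are antisymmetric in $(Y_{i,j}, Y_{i,j}')$ and covariances for distinct edges vanish outright (see \cref{eqA23} and \cref{lem-cov1}). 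Under your vertex update the whole block $(X_I, Y_{I,\cdot})$ changes and the resampled edge $Y'_{i,i'}$ is shared between the updates at $i$ and at $i'$, so that vanishing argument does not transfer; you would need new covariance estimates of the form $\Cov\bigl(|D_i|\Delta_i,\, |D_{i'}|\Delta_{i'}\bigr) = O(n^{-3})$, and the place where connectedness of the principal support graphs actually does work (the covariance-killing conditions in \cref{lem-cov1}) is compressed into one sentence. As written, the proposal establishes the easy part (the generalized linearity condition) and leaves the technical core of the theorem unproved.
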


If we further assume that the function $\func$ does not depend on $X$, i.e., $\func(X; Y) = g(Y)$ for some symmetric $g : \mathcal{Y}^{k(k-1)/2} \to \R$,   we obtain a sharper convergence rate. 
To give the theorem, we first introduce some more notation. Let $G^{(r)}$ be the graph generated from $G$ by deleting the node $r$ and all the edges connecting to the node $r$. We say $G$ is \emph{strongly connected} if 
$G^{(r)}$ is connected or empty for all $r \in V(G)$. We note that all strongly connected graphs are also connected. 
The following theorem provides a sharper Berry--Esseen bound than that in \cref{thm-3.0}.

\begin{theorem}
	\label{thm-3.2}
	Assume that $f(X_{[k]}; Y_{[k]_2}) = g(Y_{[k]_2})$ almost surely for some symmetric $g:\mathcal{Y}^{k(k-1)/2} \to \R$. Let $\tau$ and $\sigma_{A,B}$ be defined in \cref{thm-3.0}. Assume that the conditions in \cref{thm-3.0} are satisfied and assume further that all graphs in $\mathcal{G}_{f,d}$ are strongly connected.  Then,
	\begin{align*}
		\sup_{z \in \R} \biggl\lvert \P \biggl[  \frac{(S_{n,k}(g) - \E \{ S_{n,k}(g) \} )}{\sqrt{\Var \{ S_{n,k}( g ) \}}} \leq z  \biggr] - \Phi(z) \biggr\rvert \leq C n^{-1}, 		 
	\end{align*}
	where $C > 0$ is a constant depending on $k, d, \sigma_{\min}$, and $\tau$. 
\end{theorem}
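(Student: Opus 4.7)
My plan is to apply the new Berry--Esseen theorem for the exchangeable pair approach from Section 4, with a single-edge resampling coupling that is available precisely because $\func(X;Y) = g(Y)$ does not depend on $X$. Set $W = (S_{n,k}(g) - \E\{S_{n,k}(g)\})/\sqrt{\Var\{S_{n,k}(g)\}}$; pick $(I,J)$ uniformly from $\mathcal{I}_{n,2}$; let $Y'_{I,J}$ be an independent copy of $Y_{I,J}$ and $Y'_{i,j} = Y_{i,j}$ for $(i,j)\neq(I,J)$; and let $W'$ be the normalized statistic built from $Y'$. Then $(W,W')$ is exchangeable. Compared with the vertex-swap coupling that underlies \cref{thm-3.0}, this edge-swap coupling is much finer: only $\binom{n-2}{k-2}$ summands of $S_{n,k}(g)$ are altered, not $\binom{n-1}{k-1}$, so $D := W - W'$ is smaller by an additional factor of order $n^{-1/2}$ in $L^2$.

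Next, I would exploit the Hoeffding decomposition $g(Y_{[k]_2}) = \sum_{A,B} f_{A,B}(X_A;Y_B)$ to split $D$ into pieces indexed by subgraphs $G_{A,B}$ containing the edge $\{I,J\}$. Using orthogonality of the Hoeffding components together with the principal-degree assumption, I would verify the generalized linearity condition $\E\{D \mvert \mathcal{F}\} = \lambda(W + R)$ with $\lambda$ of order $n^{-2}$: the principal layer $f_{(d)}$ produces $\lambda W$, while the higher layers $f_{(\ell)}$ with $\ell > d$ contribute only to $R$ and are smaller by an extra factor of order $n^{-1/2}$ in norm, using the variance scaling stated in Section 2.

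Strong connectedness of every $G_{A,B} \in \mathcal{G}_{f,d}$ then enters decisively when sharpening the conditional-moment estimates $\E\{D^2\mvert\mathcal{F}\}$ and $\E|D|^3$. Whenever two index sets $\alpha, \alpha'$ share the fixed edge $\{I,J\}$ and a further vertex $r$, the combinatorial contribution is controlled by the subgraph $G_{A,B}^{(r)}$, which remains connected by hypothesis; this is exactly what yields one extra $n^{-1/2}$ saving beyond the merely-connected setting of \cref{thm-3.0} and upgrades the rate from $n^{-1/2}$ to $n^{-1}$.

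The main obstacle will be establishing these conditional-moment estimates with the sharp order. Concretely, one must show
\begin{equation*}
\E\bigl|\E\{D^2\mvert\mathcal{F}\} - 2\lambda\bigr| + \E|D|^3 = O(\lambda/n),
\end{equation*}
which requires carefully pairing summands with matched index pairs $(\alpha,\alpha')$ simultaneously across all Hoeffding layers and invoking strong connectedness at each level to trim the resulting combinatorial sums by the additional $n^{-1/2}$ factor; the assumption $\tau = \lVert g(Y_{[k]_2})\rVert_4 < \infty$ then controls the remaining cross-terms via Cauchy--Schwarz. Once these estimates are in place, the Section~4 theorem applies directly and gives the claimed Berry--Esseen bound of order $n^{-1}$.
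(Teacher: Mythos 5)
Your coupling (resampling a single $Y_{I,J}$) and your use of strong connectedness to sharpen the variance estimates are indeed the paper's ingredients (note, incidentally, that the proof of \cref{thm-3.0} already uses this same edge-resampling pair, not a vertex swap). But there are two structural gaps that stop your plan short of the rate $n^{-1}$. First, you place every Hoeffding layer $\ell>d$ into the regression remainder $R$. In \cref{thm01} and \cref{cor-nonlinear} the quantity $\E|R|$ enters the bound additively, and the layer $d+1$ has standard deviation of order $n^{-1/2}\sigma_n$; hence your $\E|R|$ is of order $n^{-1/2}$ whenever $\mathcal{G}_{f,d+1}\neq\emptyset$, and the whole argument is capped at the rate $n^{-1/2}$ of \cref{thm-3.0}. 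The paper avoids this by taking $W=\sigma_n^{-1}\bigl(S_{n,k}(g_{(d)})+S_{n,k}(g_{(d+1)})\bigr)$, i.e.\ absorbing the $(d+1)$-layer into $W$ itself, and letting only the layers $\ell\geq d+2$ form the additive remainder $U$ of \cref{cor-nonlinear}, for which $\E|U|\leq Cn^{-1}$ and $\lambda^{-1}\E\lvert\Delta(U-U^{(I,J)})\rvert\leq Cn^{-1}$; the regression remainder is then $R=0$ exactly (this is also why \cref{lem5.4,lem5.5} are stated for pairs taken from $\tilde{\mathcal{G}}_{f,d}\cup\mathcal{G}_{f,d+1}$).

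Second, your choice $D=W-W'$ does not satisfy the linearity condition with a negligible $R$: resampling one edge reproduces each Hoeffding component $f_{A,B}$ with the factor $2|B|/(n(n-1))$, which depends on the number of edges of its support graph, and the graphs in $\mathcal{G}_{f,d}$ (or $\mathcal{G}_{f,d+1}$) need not have equal edge counts. So $\E\{W-W'\mvert X,Y\}$ is in general not proportional to $W$, and the mismatch occurs already inside the principal layer, i.e.\ at order one. This is precisely where the extra freedom of condition (A) is used in the paper: $D$ is taken to be the reweighted antisymmetric statistic with coefficients $\mu_{A,B}=\nu_{A,B}/|B|$, which gives $\E\{D\mvert X,Y\}=\tfrac{2}{n(n-1)}W$ with $R=0$. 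You invoke the generalized condition but never exploit the freedom $D\neq\Delta$, so this step is missing. Finally, the quantity you propose to control, $\E\lvert\E\{D^2\mvert\mathcal{F}\}-2\lambda\rvert+\E|D|^3$, is not what \cref{thm01} requires: its second term is $\lambda^{-1}\E\lvert\E\{D^*\Delta\mvert W\}\rvert$ with $D^*=|D|$, and in the paper this is made $O(n^{-1})$ by a cancellation-plus-variance argument (antisymmetry in the resampled edge together with \cref{lem5.5}), not by a third-moment bound.
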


\section{Applications}%
\label{sec:applications}

\subsection{Subgraphs counts in random graphs generated from graphons}%
\label{sub:subgraphs_counts_in_graphon_generated_random_graphs}

A symmetric Lebesgue measurable function $\kappa : [0,1]^2 \to [0,1]$ is called a \emph{graphon}, which was firstly introduced by \cite[]{lovasz2006} to represent the graph limit. 
Given a graphon $\kappa$ and $n \geq 2$, the $\kappa$-random graph $\mathbb{G}(n, \kappa)$ can be generated as follows: 
Let $n \geq 1$ and let $X = (X_1, \dots, X_n)$ be a vector of independent uniformly distributed
random variables on $[0,1]$. Given $X$, we generate the graph $\mathbb{G}(n, \kappa)$ by connecting the node pair $(i,j)$ independently with probability $\kappa(X_i, X_j)$. 
This construction was firstly introduced by \citet{Dia81}, which can be used to study large dense and sparse random graphs and random trees generated from graphons. We refer to \citet{lovasz2006,Bol07,lovasz2012large} for more details. 

\def\ER{\mathrm{ER}}
Subgraph counts are important statistics in estimating graphons. As a special case, when $\kappa \equiv p$ for some $p \in (0,1)$, the $\kappa$-random graph model becomes the classical Erd\"os--R\'enyi model $\ER(p)$. The study of asymptotic properties of subgraph counts in
$\ER(p)$ dates back to \citet{Now89,barbour1989central,janson1991asymptotic} for more details. Recently, \citet{Kr17D}, \citet{Rol17} and \citet{Pri18a}  applied Stein's method to obtain an optimal Berry--Esseen bound for triangle counts in $\ER(p)$. For subgraph counts in
$\kappa$-random graph, \citet{Kau20a} proved an upper bound of the Kolmogorov distance for multivariate normal approximations for centered subgraph counts with order $n^{-1/(p + 2)}$ for some $p > 0$.
However, the Berry--Esseen bounds for subgraph counts of $\kappa$-random graph is still unknown so far. 
In this subsection, we apply \cref{thm-3.0,thm-3.2} to prove sharp Berry--Esseen bounds for
subgraph counts statistics.

Let $\Xi = (\xi_{i,j})_{1 \leq i < j \leq n}$ be the adjacency matrix of $\mathbb{G}(n, \kappa)$, where for each $(i,j)$,
the binary random variable $\xi_{i,j}$ indicates the
connection of the graph. 
Formally, let $Y = (Y_{1,1}, \dots, Y_{n-1,n})$ be a vector of independent uniformly distributed random variables that is also independent of $X$, and then we can write $\xi_{i,j} = \IN{Y_{i,j} \leq \kappa(X_i, X_j)} $.
For any nonrandom simple $F$ with $ v(F)  = k$, the (injective) subgraph counts and induced subgraph counts in $ \mathbb{G}(n, \kappa) $ are defined by  
\begin{align*}
	T_F^{\inj} \coloneqq T_F^{\inj} ( \mathbb{G}(n, \kappa) ) & = \sum_{\alpha \in \mathcal{I}_{n,k}} \phi_F^{\inj} (\xi_{\alpha(1),\alpha(2)}, \dots, \xi_{\alpha({k-1}), \alpha({k})}) ,
	\\
	T_F^{\ind} \coloneqq T_F^{\ind}( \mathbb{G}(n, \kappa) ) & =  \sum_{\alpha \in \mathcal{I}_{n,k}} \phi_F^{\ind} (\xi_{\alpha(1), \alpha(2)}, \dots, \xi_{\alpha({k - 1}), \alpha (k)}), 
\end{align*}
respectively, where for $(x_{1,1}, \dots, x_{k - 1, k}) \in \mathbb{R}^{k(k - 1)/2}$, 
\begin{align*}
	\phi_F^{\inj} (x_{1,1}, \dots, x_{k-1,k})  & =  \sum_{H: H \cong F} \prod_{(i,j) \in E(H)} x_{i,j}
	, \\
	\phi_F^{\ind} ( x_{1, 2}, \dots, x_{{k - 1}, k}  )
												 & =  \sum_{H: H \cong F} \prod_{(i,j) \in E(H)} x_{i,j}\prod_{(i,j) \not\in E(H)} (1 - x_{i,j}).
\end{align*}
Here, the summation $\sum_{H: H \cong F}$ ranges over the subgraphs with $v(F)$ nodes that are isomorphic to $F$ and thus contains $v(F)!/|\Aut(F)|$ terms, where $\lvert \Aut(F) \rvert$ is the number of automorphisms of $F$.  Moreover, we note that both $\phi_F^{\inj}$ and $\phi_F^{\ind}$ are symmetric.
For example, if $F$ is the $2$-star, then $k = 3$, $|\Aut(F)| = 2$ and 
\begin{align*}
	\phi_F^{\inj}( \xi_{1,2}, \xi_{1,3}, \xi_{2,3} ) & =  \xi_{1,2} \xi_{1,3} + \xi_{1,2} \xi_{2,3}+ \xi_{1,3} \xi_{2,3} ,\\
	\phi_F^{\ind}( \xi_{1,2}, \xi_{1,3}, \xi_{2,3} ) & = \xi_{1,2} \xi_{1,3} (1 - \xi_{2,3}) + \xi_{1,2} \xi_{2,3} (1 - \xi_{1,3}) + \xi_{1,3} \xi_{2,3} (1 - \xi_{1,2}).
\end{align*}
If $F$ is a triangle, then $|\Aut(F)| = 6$ and 
\begin{align*}
	\phi_F^{\inj}(\xi_{1,2}, \xi_{1,3}, \xi_{2,3})= \phi_F^{\ind}(\xi_{1,2}, \xi_{1,3},\xi_{2,3}) = \xi_{1,2}\xi_{1,3}\xi_{2,3}.
\end{align*}

Let 
\begin{align*}
	t_F(\kappa) & =  \int_{ [0,1]^{k} } \prod_{(i,j) \in E(F)} \kappa(x_i, x_j)  \prod_{i \in V(F)} d x_i,\\  
	t_F^{\ind}(\kappa) & =  \int_{ [0,1]^{k} } \prod_{(i,j) \in E(F)} \kappa(x_i, x_j) \prod_{(i,j) \nin E(F)} (1 - \kappa(x_i, x_j)) \prod_{i \in V(F)} d x_i.  
\end{align*}
Then, we have 
\begin{align*}
	\E \{ \phi_F^{\inj} (\xi_{1,1}, \dots, \xi_{k-1,k})\}
	& = \frac{k!}{|\Aut(F)|} t_F(\kappa), \\
	\E \{ \phi_F^{\ind} (\xi_{1,1}, \dots, \xi_{k-1,k})\}
	& = \frac{k!}{|\Aut(F)|} t_F^{\ind}(\kappa). 
\end{align*}


As $\xi_{i,j} = \IN{ Y_{i,j} \leq \kappa(X_i, X_j) }$, let 
\begin{align*}
	\func_F^{\inj} (X_{[k]}; Y_{[k]_2}) & = \phi_F^{\inj} ( \xi_{1,1}, \dots, \xi_{k - 1, k} ), 
\end{align*}
Now, as random variables $(\xi_{i,j})_{1 \leq i < j \leq n}$ are conditionally independent given $X$, we have 
\begin{align*}
	\E \{ \func_F^{\inj} ( X_{[k]}; Y_{[k]_2} ) \mvert X \} 
	& = \sum_{H \cong F} \prod_{(i,j) \in E(H)} \kappa (X_i, X_j),  \\
	\E \{ \func_F^{\ind} ( X_{[k]}; Y_{[k]_2} ) \mvert X \} 
	& = \sum_{H \cong F} \prod_{(i,j) \in E(H)} \kappa (X_i, X_j) \prod_{(i,j) \nin E(H)} (1 - \kappa(X_i, X_j)).  
\end{align*}
Let 
\begin{align*}
	f_{\Gnode}^{\inj}(x) 
	& = \E \{ \func_F^{\inj}	( X_{[k]}; Y_{[k]_2} ) \mvert X_1 = x \} \\
	& = \sum_{H \cong F} \E \biggl\{ \prod_{(i,j) \in E(H)} \kappa(X_i, X_j) \biggm\vert X_1 = x\biggr\}	, 
\end{align*}
and similarly, let 
\begin{align*}
	f_{\Gnode}^{\ind}(x) 
	& = 
	\E \{ \func_{F}^{\ind} ( X_{[k]}; Y_{[k]_2} ) \mvert X_1 = x \}\\
	 & =  \sum_{H \cong F} \E \biggl\{ \prod_{(i,j) \in E(H)} \kappa(X_i, X_j) \prod_{(i,j) \nin E(H)} (1 - \kappa(X_i, X_j)) \biggm\vert X_1 = x \biggr\}. 
\end{align*}
We have the following theorem, which follows from \cref{thm-3.1} directly. 
\begin{theorem}
	Let $\sigma_{\Gnode}^{\inj} = \lVert f_{\Gnode}^{\inj}(X_1) - \E \{ f_{\Gnode}^{\inj}(X_1) \} \rVert_2$ and $\sigma_{\Gnode}^{\ind} = \lVert f_{\Gnode}^{\ind}(X_1) - \E \{g_F^{\ind}(X_1)\} \rVert_2$. Assume that $\sigma_{\Gnode}^{\inj}> 0$, then 
	\begin{align*}
		\sup_{z \in \R} \biggl\lvert \P \biggl[ \frac{\sqrt{n} }{k \sigma_{\Gnode}^{\inj}} \binom{n}{k}^{-1} (T_F^{\inj} - \E \{ T_F^{\inj} \}) \leq z \biggr] - \Phi(z) \biggr\rvert
		& \leq C n^{-1/2}. 
	\end{align*}
	Moreover, assume that $\sigma_{\Gnode}^{\ind} > 0$, then
	\begin{align*}
		\sup_{z \in \R} \biggl\lvert \P \biggl[ \frac{\sqrt{n} }{k \sigma_{\Gnode}^{\ind}} \binom{n}{k}^{-1} (T_F^{\ind} - \E \{T_F^{\ind}\}) \leq z \biggr] - \Phi(z) \biggr\rvert
		& \leq C n^{-1/2}.
	\end{align*}
\end{theorem}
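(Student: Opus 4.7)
The plan is to exhibit $T_F^{\inj}$ and $T_F^{\ind}$ as generalized $U$-statistics of the form $S_{n,k}(\func)$ of principal degree $d = 1$ and then invoke \cref{thm-3.1} directly. Setting $\func_F^{\inj}(X_{[k]}; Y_{[k]_2}) \coloneqq \phi_F^{\inj}(\xi_{1,2}, \dots, \xi_{k-1,k})$ with $\xi_{i,j} = \I_{\{Y_{i,j} \leq \kappa(X_i, X_j)\}}$, one sees immediately that $\func_F^{\inj}$ is a symmetric measurable function on $\mathcal{X}^{k} \times \mathcal{Y}^{k(k-1)/2}$ (symmetry is inherited from $\phi_F^{\inj}$) and that $T_F^{\inj} = S_{n,k}(\func_F^{\inj})$; the same identification works for $\func_F^{\ind}$ and $T_F^{\ind}$.

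To apply \cref{thm-3.1} I would first observe that both kernels are bounded: $\phi_F^{\inj}$ and $\phi_F^{\ind}$ are sums of $k!/|\Aut(F)|$ monomials in $\{0,1\}$-valued variables, so $\lVert \func_F^{\inj}(X_{[k]}; Y_{[k]_2}) \rVert_4 \leq k!/|\Aut(F)|$ and similarly for $\func_F^{\ind}$. A direct inspection of \cref{eq-fAB} at $(A,B) = (\{1\}, \emptyset)$ shows that the corresponding Hoeffding component equals $f_\Gnode^{\inj}(x) - \E\{f_\Gnode^{\inj}(X_1)\}$, whose $L^2$-norm is by definition $\sigma_\Gnode^{\inj}$. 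Hence the assumption $\sigma_\Gnode^{\inj} > 0$ is exactly the hypothesis of \cref{thm-3.1} and forces the principal degree of $\func_F^{\inj}$ to be $1$; plugging in yields
\begin{align*}
\sup_{z \in \R} \biggl\lvert \P \biggl[ \frac{T_F^{\inj} - \E\{T_F^{\inj}\}}{\sqrt{\Var(T_F^{\inj})}} \leq z \biggr] - \Phi(z) \biggr\rvert \leq \frac{12 k \,(k!/|\Aut(F)|)^{2}}{\sqrt{n}\, (\sigma_\Gnode^{\inj})^2},
\end{align*}
and an analogous bound for $T_F^{\ind}$.

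It then remains to pass from the $\sqrt{\Var(T_F^{\inj})}$ normalization to the explicit $k \sigma_\Gnode^{\inj} \binom{n}{k}/\sqrt{n}$ appearing in the statement. Using the Hoeffding decomposition together with the variance estimate in \citet[Lemma 3]{janson1991asymptotic}, the $v_{A,B} = 1$ component dominates and one has $\Var(T_F^{\inj}) = k^2 \binom{n}{k}^2 (\sigma_\Gnode^{\inj})^2/n + O(n^{2k-2})$, so the ratio of the two normalizations is $1 + O(n^{-1})$. Swapping one for the other perturbs the normalized CDF uniformly by $O(n^{-1})$ via $\sup_z \lvert \Phi(z) - \Phi(z(1+\eps)) \rvert \leq C \lvert \eps \rvert$, which is absorbed into the target $O(n^{-1/2})$ bound. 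The induced case is handled by repeating the argument verbatim with $\inj$ replaced by $\ind$.

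The main obstacle is essentially bookkeeping: identifying $f_\Gnode^{\inj} - \E\{f_\Gnode^{\inj}(X_1)\}$ with the Hoeffding component at $(A,B) = (\{1\}, \emptyset)$ through \cref{eq-fAB}, and quantifying the remainder in the variance expansion carefully enough to pass freely between $\sqrt{\Var(T_F^{\inj})}$ and $k \sigma_\Gnode^{\inj} \binom{n}{k}/\sqrt{n}$ without losing the $O(n^{-1/2})$ rate. The $L^4$-moment bound and the final invocation of \cref{thm-3.1} are routine once these calibrations are in place.
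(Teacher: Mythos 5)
Your proposal is correct and follows essentially the same route as the paper, which simply states that the result follows directly from Theorem 2.1 after identifying $T_F^{\inj}$ and $T_F^{\ind}$ as generalized $U$-statistics with a nondegenerate first-order Hoeffding component of norm $\sigma_{\Gnode}^{\inj}$ (resp.\ $\sigma_{\Gnode}^{\ind}$). The one step the paper leaves implicit---replacing the normalization $\sqrt{\Var(T_F^{\inj})}$ by $k\sigma_{\Gnode}^{\inj}\binom{n}{k}/\sqrt{n}$, which differ by a factor $1+O(n^{-1})$ and hence perturb the Kolmogorov distance by at most $O(n^{-1})$---is exactly the renormalization you carry out, and you do it correctly.
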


If $\kappa \equiv p$ for a fixed number $0 < p < 1$, then the random variables $( \xi_{i,j} )_{1 \leq i < j \leq n}$ are i.i.d.\  and the functions $\phi_F^{\inj}$ and $\phi_F^{\ind}$ do not depend on $X$. We have the following theorem: 
\begin{theorem}
	\label{thm3.2}
	Let $\kappa \equiv p$ for $0 < p < 1$. Then
	\begin{align*}
		\sup_{z \in \R} \biggl\lvert \P \biggl[ \frac{ T_F^{\inj} - \E \{ T_F^{\inj} \} }{ ( \Var \{ T_F^{\inj} \} )^{1/2}  } \leq z \biggr] - \Phi(z) \biggr\rvert
		& \leq C n^{-1}. 
	\end{align*}
\end{theorem}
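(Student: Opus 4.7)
The plan is to deduce Theorem~\ref{thm3.2} as a direct specialization of Theorem~\ref{thm-3.2}. When $\kappa \equiv p$, the edge indicators $\xi_{i,j} = \I_{Y_{i,j} \leq p}$ are mutually independent Bernoulli$(p)$ random variables that depend only on the $Y$-coordinates. Hence
\begin{align*}
\phi_F^{\inj}(\xi_{\alpha(1),\alpha(2)}, \dots, \xi_{\alpha(k-1),\alpha(k)}) = g(Y_{\alpha(1),\alpha(2)}, \dots, Y_{\alpha(k-1),\alpha(k)})
\end{align*}
for a symmetric $g : \mathcal{Y}^{k(k-1)/2} \to \R$ with $k = v(F)$, so that $T_F^{\inj} = S_{n,k}(g)$ falls into the setting of Theorem~\ref{thm-3.2} with $f(X;Y) = g(Y)$. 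The integrability requirement $\tau = \lVert g \rVert_4 < \infty$ is immediate since $g$ is bounded by $k!/\lvert \Aut(F) \rvert$. It remains to identify the principal degree $d$ and to check that every graph in $\mathcal{G}_{g,d}$ is strongly connected.

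Because $g$ has no $X$-dependence, applying the inclusion--exclusion formula \eqref{eq-fAB} shows that the inner alternating sum over $A' \subset A$ vanishes whenever $A \neq \emptyset$; consequently every nonzero Hoeffding component of $g$ has the form $f_{\emptyset,B}$ for some $B \subset [k]_2$. In particular $f_{\{1\},\emptyset} \equiv 0$, so $d \geq 2$. For the single edge $B = \{(1,2)\}$ I would expand $\xi_{i,j} = p + (\xi_{i,j} - p)$ inside $\phi_F^{\inj}$ and isolate the contributions conditioned on $Y_{1,2}$; this yields
\begin{align*}
f_{\emptyset,\{(1,2)\}}(Y_{1,2}) = \E\{g(Y_{[k]_2}) \mvert Y_{1,2}\} - \E\{g(Y_{[k]_2})\} = N_F \, p^{\lvert E(F) \rvert - 1}\,(\xi_{1,2} - p),
\end{align*}
where $N_F$ is the (strictly positive) number of labeled copies of $F$ on $[k]$ containing the edge $(1,2)$. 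Since $F$ carries at least one edge and $p \in (0,1)$, this component is nonzero, whence $d = 2$ and $\mathcal{G}_{g,2}$ consists of the single isomorphism class $\Gedge$.

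A single edge is trivially strongly connected, as removing either endpoint leaves an isolated vertex, which is connected; all hypotheses of Theorem~\ref{thm-3.2} are thus in place and the claimed $O(n^{-1})$ Kolmogorov bound follows at once. The main obstacle is the explicit identification of the principal component and the verification that no other pair $(A,B)$ with $v_{A,B} = 2$ contributes; this reduces to bookkeeping of \eqref{eq-fAB} that is made entirely routine by the $X$-independence of $g$, so the bulk of the technical work has already been absorbed into the proof of Theorem~\ref{thm-3.2}.
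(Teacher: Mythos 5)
Your argument is correct and essentially identical to the paper's proof: both reduce $T_F^{\inj}=S_{n,k}(g)$ to Theorem~\ref{thm-3.2} with $d=2$ after noting that the $X$-dependence disappears when $\kappa\equiv p$, that all Hoeffding components with $A\neq\emptyset$ vanish, and that the single-edge component is a nonzero multiple of $p^{e(F)-1}(\xi_{1,2}-p)$, the edge graph being strongly connected. The only cosmetic difference is that you compute this edge component directly (correctly, with coefficient $2e(F)(v(F)-2)!/\lvert\Aut(F)\rvert$ in the paper's normalization) whereas the paper simply cites the formula from \citet{janson1991asymptotic}.
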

\begin{remark}
	For the $L_1$ bound, \citet{barbour1989central} proved the same order of $O(n^{-1})$ in the case that $p$ is a constant. 
	For the Berry--Esseen bound, \citet{Pri18a} proved a general Berry--Esseen bound for subgraph counts for Erd\"os--R\'enyi random graph using a different method. Specially, if $p$ is a constant, then \cref{thm3.2} provides the same result as in \citet{Pri18a}. 
\end{remark}

For induced subgraph counts, we need to consider some separate cases. Let $s(F)$ and $t(F)$ denote the number of 2-stars and triangles in $F$, respectively. If any of the following conditions holds, then it has been proven by \cite{janson1991asymptotic} that $ ( T_F^{\ind} - \E \{ T_F^{\ind} \} )/(  \Var \{ T_F^{\ind} \}
)^{1/2} $
	converges to a standard normal distribution: 
	\begin{enumerate}[({G}1)]
	\item If $e( F ) \neq p \binom{v(F)}{2}$; 
	\item if $e( F ) = p \binom{v(F)}{2}$, $s(F) \neq 3 p^2 \binom{v(F)}{3}$; 
	\item if $ e( F ) = p \binom{v(F)}{2} $, $s(F) = 3 p^2 \binom{v(F)}{3}$ and  $t(F) \neq p^3 \binom{v(F)}{3}$. 
\end{enumerate}

The following theorem gives the Berry--Esseen bounds for induced subgraph counts. 
\begin{theorem}
	Let $\kappa \equiv p$ for $0 < p < 1$. If (G1) or(G3) holds, then 
	\begin{equ}
		\sup_{z \in \R} \biggl\lvert \P \biggl[ \frac{ T_F^{\ind} - \E \{ T_F^{\ind} \} }{ ( \Var \{ T_F^{\ind} \} )^{1/2}  } \leq z \biggr] - \Phi(z) \biggr\rvert
		& \leq C n^{-1}.
        \label{eq-3.3-1}
	\end{equ}
If (G2) holds, then 
\begin{equ}
		\sup_{z \in \R} \biggl\lvert \P \biggl[ \frac{ T_F^{\ind} - \E \{ T_F^{\ind} \} }{ ( \Var \{ T_F^{\ind} \} )^{1/2}  } \leq z \biggr] - \Phi(z) \biggr\rvert
		& \leq C n^{-1/2}.
        \label{eq-3.3-2}
	\end{equ}
\end{theorem}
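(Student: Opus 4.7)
The plan is to observe that under $\kappa \equiv p$ the statistic $T_F^{\ind}$ depends only on $Y$ (through the i.i.d.\ Bernoulli$(p)$ indicators $\xi_{i,j}$), putting us in the setting of \cref{thm-3.2,thm-3.0} with $g = \phi_F^{\ind}$. The proof then reduces to identifying the principal degree $d$ and the principal support graphs of $g$ under each of (G1)--(G3), and then applying \cref{thm-3.2} whenever every principal support is strongly connected, falling back on \cref{thm-3.0} otherwise.

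First, since $g$ does not depend on $X$, the inner sum $\sum_{A' \subset A}(-1)^{|A| - |A'|}$ in \cref{eq-fAB} vanishes for $A \neq \emptyset$, forcing $g_{A,B} \equiv 0$ whenever $A \neq \emptyset$. Hence the only candidate principal supports have the form $G_{\emptyset, B}$, and since $v_{\emptyset, B} = |V(B)|$ it suffices to enumerate edge sets $B$ with $|V(B)| \in \{2,3\}$: these are the single edge, the 2-star, and the triangle. A disconnected $B$ produces a zero projection by factoring \cref{eq-fAB} across its connected components, so disconnected candidates never enter the principal support.

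Second, I would read off the $L^2$ norms of these three Hoeffding components. Expanding $\phi_F^{\ind}$ as a polynomial in $(\xi_e)$ and collecting terms -- essentially the computation carried out in \citet{janson1991asymptotic} -- one finds that the edge, 2-star and triangle components have norms proportional to $\lvert e(F) - p \binom{v(F)}{2} \rvert$, $\lvert s(F) - 3p^2 \binom{v(F)}{3} \rvert$ and $\lvert t(F) - p^3 \binom{v(F)}{3} \rvert$ respectively. Thus (G1) yields $d = 2$ with the edge as the unique principal support; (G2) yields $d = 3$ with the 2-star included in the principal support (whether or not the triangle is); and (G3) yields $d = 3$ with the triangle as the only principal support.

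Finally, I would verify the connectedness properties: the edge and the triangle are strongly connected (deleting any vertex leaves a singleton or an edge), whereas the 2-star is connected but not strongly connected (deleting its center leaves two isolated vertices). Consequently \cref{thm-3.2} applies directly under (G1) and (G3), giving the $Cn^{-1}$ bound in \eqref{eq-3.3-1}; under (G2) the 2-star in the principal support forces us to use the weaker \cref{thm-3.0}, which produces the $Cn^{-1/2}$ bound in \eqref{eq-3.3-2}. The main obstacle is the coefficient identification in the second step, which links the analytic trichotomy (G1)--(G3) to the combinatorial notion of principal support; the connectedness checks and the final invocation of \cref{thm-3.2,thm-3.0} are then routine.
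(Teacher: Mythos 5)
Your proposal is correct and is essentially the paper's own proof: under $\kappa\equiv p$ reduce to a function $g=\phi_F^{\ind}$ of $Y$ alone, read off the edge, 2-star and triangle Hoeffding components (the explicit formulas are quoted from \citet{janson1991asymptotic}), and then apply \cref{thm-3.2} under (G1) and (G3), since the edge and the triangle are strongly connected, and \cref{thm-3.0} under (G2), since the 2-star is connected but not strongly connected. Two small inaccuracies are worth flagging, though neither affects the conclusion. First, the unconditional coefficients of the 2-star and triangle components are proportional to $\lvert \bar s(F) - 2p\,\bar e(F) + p^2 \rvert$ and $\lvert \bar t(F) - 3p\,\bar s(F) + 3p^2\bar e(F) - p^3 \rvert$, not to $\lvert \bar s(F) - p^2 \rvert$ and $\lvert \bar t(F) - p^3 \rvert$ as you state; your simpler expressions become correct only after imposing $\bar e(F)=p$ (resp.\ $\bar e(F)=p$ and $\bar s(F)=p^2$), which is exactly the regime in which you invoke them, so the trichotomy (G1)--(G3) is matched to the principal supports correctly. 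Second, your blanket claim that a disconnected $B$ always has vanishing Hoeffding projection is false in general --- disconnected principal supports are generically nonzero and are precisely what produces the non-normal limits in \citet{janson1991asymptotic} --- but the claim is vacuous here: once you have shown $A=\emptyset$, any nonempty $B$ spanning at most three vertices is an edge, a 2-star or a triangle, all connected, so no disconnected candidate can occur at the relevant degrees $d\in\{2,3\}$ and the hypotheses of \cref{thm-3.2,thm-3.0} are verified exactly as in the paper.
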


\section{A new Berry--Esseen bound for exchangeable pair approach}
\subsection{Berry--Esseen bound}%
\label{sub:berry_esseen_bound}
In this section, we establish a new Berry--Esseen theorem for exchangeable pair approach under a new setting. 
Let $X \in \mathcal{X}$ be a random variable valued on a measurable space and let $W = \phi(X)$ be the random variable of interest where $\phi: \mathcal{X} \to \R$. Assume that $\E \{ W \} = 0$ and $\E \{ W^2 \} = 1$. We propose the following condition: 
\begin{enumerate}
	\item [(A)] Let $(X, X')$ be an exchangeable pair and let $F : \mathcal{X} \times \mathcal{X} \to \R$ be an antisymmetric function. Assume that $D \coloneqq F(X, X')$ satisfies the following condition: 
		\begin{equ}
			\E \{ D \vert X \} = \lambda ( W + R ),	
            \label{eq00}
		\end{equ}
		where $\lambda > 0$ is a constant and $R$ is a random variable. 
\end{enumerate}
We remark that the operator of antisymmetric functions was firstly mentioned by \citet{Hol04}, and the condition (A) was considered by \citet{Cha07b}, who applied the exchangeable pair approach to prove concentration inequalities.

The following theorem provides a uniform Berry--Esseen bound for exchangeable pair approach under the assumption (A). 
\begin{theorem}
	\label{thm01}
	Let $(X, X')$ and $D$ satisfy the condition \textup{(A)}. Let $W' = \phi(X')$ and $\Delta = W - W'$. Then, 
	\begin{align}
		\sup_{z \in \R} \lvert \P [ W \leq z ] - \Phi(z) \rvert 
		& \leq \E \biggl\lvert 1 - \frac{1}{2\lambda} \E \{ D \Delta \mvert W \} \biggr\rvert + \frac{1}{\lambda} \E \bigl\lvert \E \{ D^* \Delta \mvert W \} \bigr\rvert + \E \lvert R \rvert, 
		\label{eq:thm1}
	\end{align}
	provided that $D^* \coloneqq F^* (X, X') \geq |D|$, where $F^*$ is a symmetric function.
\end{theorem}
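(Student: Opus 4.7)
The plan is to apply Stein's method. For each $z\in\R$, let $f_z$ be the bounded solution to the Stein equation
\[
  f_z'(w) - w f_z(w) = \I(w\leq z) - \Phi(z),
\]
with the standard estimates $\|f_z'\|_\infty\leq 1$ and $\|(wf_z(w))'\|_\infty\leq 1$. Then $\P(W\leq z)-\Phi(z)=\E\{f_z'(W)-Wf_z(W)\}$, so the task reduces to controlling $\E\{Wf_z(W)\}$.

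The algebraic core combines the antisymmetry of $F$ with the exchangeability of $(X,X')$: for any integrable $h$,
\[
  \E\{D\, h(X)\} = \E\{F(X',X)\, h(X')\} = -\E\{D\, h(X')\},
\]
so $\E\{Dh(X)\}=\tfrac12\E\{D(h(X)-h(X'))\}$. Taking $h(X)=f_z(\phi(X))=f_z(W)$ and invoking $\E\{D\mid X\}=\lambda(W+R)$ gives
\[
  \E\{Wf_z(W)\} = \frac{1}{2\lambda}\,\E\{D(f_z(W)-f_z(W'))\} - \E\{Rf_z(W)\}.
\]
I then Taylor-expand $f_z(W)-f_z(W')=f_z'(W)\Delta + \rho$, where $\rho = \int_0^1[f_z'(W'+t\Delta)-f_z'(W)]\,\Delta\,dt$. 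Substituting yields
\[
  \P(W\leq z)-\Phi(z) = \E\Bigl\{f_z'(W)\Bigl[1-\frac{1}{2\lambda}\E\{D\Delta\mid W\}\Bigr]\Bigr\} - \frac{1}{2\lambda}\E\{D\rho\} + \E\{Rf_z(W)\}.
\]
The first expectation is bounded by $\E|1-(1/(2\lambda))\E\{D\Delta\mid W\}|$ via $|f_z'|\leq 1$, and the last by $\E|R|$ (after absorbing $\|f_z\|_\infty$ via a standard smoothing/truncation refinement), delivering the first and third pieces of \cref{eq:thm1}.

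The main obstacle is bounding $(1/(2\lambda))|\E\{D\rho\}|$ by $(1/\lambda)\E|\E\{D^*\Delta\mid W\}|$. My plan is to use the Stein identity itself to decompose
\[
  f_z'(W'+t\Delta)-f_z'(W) = [(W'+t\Delta)f_z(W'+t\Delta)-Wf_z(W)] + [\I(W'+t\Delta\leq z)-\I(W\leq z)],
\]
separating a $1$-Lipschitz piece of size at most $(1-t)|\Delta|$ from an indicator jump of magnitude at most~$1$. After integrating in $t$ and invoking $|D|\leq D^*$, this produces a remainder estimate of the form $C\,\E\{D^*\cdot g(W,W',\Delta)\,\Delta\}$ with $g$ bounded. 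The subtle point is that a crude modulus yields only $\E\{D^*|\Delta|\}$, whereas the theorem demands the strictly sharper $\E|\E\{D^*\Delta\mid W\}|$. Recovering the latter exploits that $D^*$ is symmetric while $\Delta$ is antisymmetric under $(X,X')\leftrightarrow(X',X)$, so $D^*\Delta$ is antisymmetric in the pair; an exchangeability-based symmetrization together with conditioning on the $\sigma$-field generated by $W$ should collapse the remainder onto $|\E\{D^*\Delta\mid W\}|$ with pre-factor at most $2$, matching $(1/(2\lambda))\cdot 2 = 1/\lambda$. Executing this symmetrization cleanly, in particular arranging that the $W$-measurable factor $f_z'(W)$ and the dependence of $g$ on $(W',\Delta)$ do not spoil the cancellation, is the heart of the proof.
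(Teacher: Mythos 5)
Your setup is the same as the paper's: the Stein equation, the identity
\begin{equation*}
\E \{ W f_z(W) \} = \frac{1}{2\lambda} \E \Bigl\{ D \bigl( f_z(W) - f_z(W') \bigr) \Bigr\} - \E \{ R f_z(W) \}
\end{equation*}
obtained from antisymmetry of $D$ plus exchangeability, and the resulting three-term decomposition; your bounds for the first and third terms are fine (for the third, no ``smoothing/truncation refinement'' is needed, since $\lVert f_z \rVert_\infty \leq 1$ already gives $\E|R|$). The genuine gap is the second term. You yourself flag it: your route through the decomposition $f_z'(w) = w f_z(w) - \I\{w > z\} + (1-\Phi(z))$ uses only the \emph{magnitudes} of the two pieces (a $1$-Lipschitz bound $(1-t)|\Delta|$ and a jump of size $1$), which after integration can only produce quantities like $\E\{D^* \Delta^2\}$ or $\E\{D^* |\Delta|\}$ --- not the required $\E \lvert \E\{ D^* \Delta \mid W \} \rvert$. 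Your proposed fix, ``$D^*\Delta$ is antisymmetric, so symmetrize and condition on $W$,'' does not close this: in your remainder $\E\{D^* g(W,W',\Delta)\Delta\}$ the factor $g$ depends on $(W',\Delta)$, so conditioning on $W$ cannot collapse it to $\E\{D^*\Delta \mid W\}$, and antisymmetry of $D^*\Delta$ alone gives no cancellation against such a factor. You acknowledge this by deferring the ``symmetrization'' as the heart of the proof --- but that deferred step is exactly the missing content.

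What actually makes it work in the paper (its Lemma 4.2) is \emph{monotonicity}, not Lipschitz/jump size: both $w f_z(w)$ and $\I\{w > z\}$ are nondecreasing, and for any nondecreasing $\func$ one has the sandwich
\begin{equation*}
- \Delta \bigl( \func(W) - \func(W') \bigr) \;\leq\; \int_{-\Delta}^{0} \bigl( \func(W+u) - \func(W) \bigr) \dd u \;\leq\; 0 .
\end{equation*}
Multiplying by $D$, splitting on the events $\{D>0\}$ and $\{D<0\}$, and using exchangeability together with the antisymmetry of $D$, the symmetry of $D^*$ and $|D| \leq D^*$, the remainder is bounded by $\frac{1}{2\lambda} \E\{ D^* \Delta\, \func(W) \}$, in which the troublesome factor is now the $W$-measurable, $[0,1]$-valued $\func(W)$; conditioning on $W$ then yields $\frac{1}{2\lambda} \E \lvert \E\{ D^* \Delta \mid W \} \rvert$ per monotone piece, and summing the two pieces gives the factor $1/\lambda$ in the statement. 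Without this sign/monotonicity argument (or an equivalent substitute), your proof does not reach the claimed bound.
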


\begin{remark}
	Assume that \cref{eq:linear} is satisfied. Then, we can choose $D = \Delta = W - W'$, and the right hand side of \cref{eq:thm1} reduces to 
	\begin{align*}
		\E \biggl\lvert 1 - \frac{1}{2\lambda} \E \{ \Delta^2 \mvert W \} \biggr\rvert + 
		\frac{1}{\lambda} \E \bigl\lvert \E \{ \Delta^* \Delta  \} \bigr\rvert + \E |R|,
	\end{align*}
	where $\Delta^* \coloneqq \Delta^* (W, W')$ is a symmetric function for $W$ and $W'$ such that $\Delta^* \geq |\Delta|$. Thus, \cref{thm01} recovers to Theorem 2.1 in \citet{Sh19B}. 
\end{remark}


The following corollary is useful for random variables that can be decomposed as a sum of $W$ and a remainder term. Specifically, 
let $T \coloneqq T(X)$ be a random variable such that $T = W + U$, where $W = \phi(X)$ is as defined at the beginning of this section, and $U \coloneqq U(X)$ is a remainder term. The following corollary gives a Berry--Esseen bound for $T$. 
\begin{corollary}
	\label{cor-nonlinear}
	Let $(X, X') \in \mathcal{X} \times \mathcal{X}$ be an exchangeable pair and let $D \coloneqq F(X, X')$ where $F: \mathcal{X} \times \mathcal{X} \to \R$ is antisymmetric. Assume that  
	\begin{align}
		\E \{ D \mvert X \} = \lambda (W + R)
		\label{eq-corcon}
	\end{align}
	for some $\lambda > 0$ and some random variable $R$. Let $U' \coloneqq U(X')$ and $\Delta = \phi(X) - \phi(X')$. Then, we have 
	\begin{multline*}
		\sup_{z \in \R} \bigl\lvert \P [T \leq z] - \Phi(z) \bigr\rvert
		\leq \E \biggl\lvert 1 - \frac{1}{2\lambda} \E \{ D \Delta \mvert X \} \biggr\rvert \\
		+ \frac{1}{\lambda} \E \bigl\lvert \E \{ D^* \Delta \mvert X \} \bigr\rvert + \frac{3}{2\lambda} \E \lvert D(U - U') \rvert + \E |R| + \E |U|, 
	\end{multline*}
	provided that $D^* \coloneqq D^* (X, X')$ is any symmetric function of $X$ and $X'$ such that $D^* \geq |D|$.
\end{corollary}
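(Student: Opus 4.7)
The plan is to parallel the proof of \cref{thm01}, with two new contributions arising from the remainder $U$: a direct $\E|U|$ piece from $\E\{U f_z(T)\}$, and additional $(U-U')$-shift corrections when the exchangeable-pair identity is applied to $f_z(T)$ in place of $f_z(W)$.

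First I would take the bounded Stein solution $f = f_z$ of $f'(t) - tf(t) = \I_{\{t \leq z\}} - \Phi(z)$ and use the standard bounds $\lVert f \rVert_\infty \leq 1$, $\lVert f' \rVert_\infty \leq 1$, together with the $1$-Lipschitzness of $u \mapsto u f(u)$. The Stein identity gives $\P[T \leq z] - \Phi(z) = \E\{f'(T) - T f(T)\}$, and decomposing $T f(T) = W f(T) + U f(T)$ immediately produces the $\E|U|$ contribution via $\lVert f \rVert_\infty \leq 1$. Replacing $W$ in $\lambda \E\{W f(T)\}$ via \eqref{eq-corcon} yields $\E\{D f(T)\} - \lambda \E\{R f(T)\}$, and the antisymmetric exchangeable-pair identity (a consequence of $(X, X') \stackrel{d.}{=} (X', X)$ together with $F(X', X) = -F(X, X')$) gives $\E\{D f(T)\} = \tfrac{1}{2} \E\{D(f(T) - f(T'))\}$, where $T' = \phi(X') + U(X')$.

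Since $T - T' = \Delta + (U - U')$ with $\Delta = \phi(X) - \phi(X')$, I would split
\begin{equation*}
	f(T) - f(T') = \bigl[f(T) - f(T - \Delta)\bigr] + \bigl[f(T - \Delta) - f(T')\bigr],
\end{equation*}
isolating the pure $\Delta$-shift from the pure $(U - U')$-shift. The second bracket is bounded by $|U - U'|$ via $\lVert f' \rVert_\infty \leq 1$ and contributes $\tfrac{1}{2\lambda}\E|D(U - U')|$ to the final bound. For the first bracket I would write
\[ f(T) - f(T - \Delta) = \Delta f'(T) - \int_{T - \Delta}^T \bigl(f'(T) - f'(u)\bigr) \dd u \]
and recombine the $\Delta f'(T)$ piece with $\E\{f'(T)\}$. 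The decisive point is that $T = \phi(X) + U(X)$ is $\mathcal{F}_X$-measurable while $\Delta$ involves $X'$, so $\E\{D\Delta f'(T)\} = \E\{f'(T) \E\{D\Delta \mvert X\}\}$, producing the principal term bounded by $\E|1 - \tfrac{1}{2\lambda} \E\{D\Delta \mvert X\}|$.

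The main obstacle will be bounding the residual $\E\{D \int_{T - \Delta}^T (f'(T) - f'(u)) \dd u\}$. Substituting $f'(t) = t f(t) + \I_{\{t \leq z\}} - \Phi(z)$ splits this into a Lipschitz-controlled smooth piece and an indicator piece requiring a concentration estimate near $z$; introducing the symmetric majorant $D^* \geq |D|$ and carrying out the estimate under conditioning on $X$ (which is valid because $T$ is $\mathcal{F}_X$-measurable) casts the problem into exactly the form of the analogous step in \cref{thm01} and produces the bound $\tfrac{1}{\lambda} \E|\E\{D^* \Delta \mvert X\}|$. The coefficient $\tfrac{3}{2\lambda}$ in front of $\E|D(U - U')|$ arises as $\tfrac{1}{2\lambda}$ from the shift bracket above plus an additional $\tfrac{1}{\lambda}$ picked up when the residual integration is translated between $[T - \Delta, T]$ and $[T', T]$ in order to reuse the concentration inequality of \cref{thm01}. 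Granted \cref{thm01}, these modifications are the only genuinely new input required.
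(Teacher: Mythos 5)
Your overall plan (rerun the Stein/exchangeable-pair argument for $T=W+U$ and peel off the $U$-contributions) is in the right spirit, and several pieces are fine: the $\E|U|$ and $\E|R|$ terms, the identity $\E\{Df_z(T)\}=\tfrac12\E\{D(f_z(T)-f_z(T'))\}$, the main term obtained from the $X$-measurability of $T$, and the $\tfrac1{2\lambda}\E|D(U-U')|$ coming from the bracket $f_z(T-\Delta)-f_z(T')$. The genuine gap is your treatment of the residual $\tfrac1{2\lambda}\E\{D\int_{T-\Delta}^{T}(f_z'(T)-f_z'(u))\,du\}$. In the proof of \cref{thm01} the corresponding term $J_2$ is \emph{not} handled by a concentration inequality; it is handled by \cref{lem:psi}, whose proof hinges on the integration interval being $[W',W]$ with $W'=\phi(X')$, so that under the swap $X\leftrightarrow X'$ the quantity $\Delta(\func(W)-\func(W'))$ keeps its sign structure and the identity $\E\{D\I_{\{D>0\}}\Delta(\func(W)-\func(W'))\}=-\E\{D\I_{\{D<0\}}\Delta(\func(W)-\func(W'))\}$ is available. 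In your split the interval is $[T-\Delta,T]$, and $T-\Delta=\phi(X')+U(X)$ is \emph{not} the exchangeable partner $T'$ of $T$ (the swap sends it to $\phi(X)+U(X')$), so the antisymmetry driving \cref{lem:psi} is lost and the lemma cannot be reused for this interval; for the indicator part of $f_z'$ the discrepancy $\func(T')-\func(T-\Delta)$ is not $O(|U-U'|)$ pointwise, so you would need a genuine randomized concentration estimate, which \cref{thm01} does not supply and which you do not construct.

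Your proposed repair (translate the integral from $[T-\Delta,T]$ to $[T',T]$) also does not give the stated constants: after translation, \cref{lem:psi} applies but with $T-T'=\Delta+(U-U')$ in place of $\Delta$, so the bound is $\tfrac1\lambda\E|\E\{D^*(T-T')\mid X\}|\le\tfrac1\lambda\E|\E\{D^*\Delta\mid X\}|+\tfrac1\lambda\E\{D^*|U-U'|\}$, and the translation itself costs another $\tfrac1\lambda\E|D(U-U')|$ (integrand bounded by $2$ over an interval of length $|U-U'|$); together with your $\tfrac1{2\lambda}$ bracket term this exceeds $\tfrac3{2\lambda}\E|D(U-U')|$ and introduces $D^*(U-U')$ terms absent from the statement, so your claimed bookkeeping ``$\tfrac1{2\lambda}+\tfrac1\lambda$'' does not close. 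The paper's route avoids all of this and is much shorter: since $T=\psi(X)$ with $\psi=\phi+U$, the pair $(T,T')$ is itself of the form covered by \cref{thm01}, and \cref{eq-corcon} can be rewritten as $\E\{D\mid X\}=\lambda\bigl(T+(R-U)\bigr)$; one applies \cref{thm01} to $T$ wholesale, so the residual is controlled by \cref{lem:psi} with the correct interval $[T',T]$, and only afterwards splits $T-T'=\Delta+(U-U')$ inside the two conditional-expectation terms, which is exactly where the coefficients $\tfrac1{2\lambda}+\tfrac1\lambda=\tfrac3{2\lambda}$ arise. If you insist on isolating the pure $\Delta$-shift inside the integral, you must either restore the exchangeability symmetry or prove a concentration inequality from scratch; as written, the residual step does not go through.
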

\begin{remark}
	Assume that $X = (X_1, \dots, X_n) $ is a family of independent random variables. Let $W = \sum_{i = 1}^n \xi_i$ be a linear statistic, where $\xi_i = h_i(X_i)$ and $h_i$ is a nonrandom function, such that $\E \{ \xi_i \} = 0$ and $\sum_{i = 1}^n \E \{ \xi_i^2 \} = 1$, and let
	$U = U(X_1, \dots, X_n) \in \R$ be a random variable. Let $T = W + U$, $\beta_2 = \sum_{i = 1}^n \E \{ \lvert \xi_i \rvert^2 \IN { \lvert \xi_i \rvert > 1 } \}$ and $\beta_3 = \sum_{i = 1}^n \E \{  \lvert \xi_i\rvert^3 \IN { \lvert \xi_i \rvert \leq 1 }
	\}$.
	\citet{Ch07N} (see also \citet{Sha16}) proved the following result: 
	\begin{align}
		\sup_{z \in \R} \lvert \P [ T \leq z ] - \Phi(z) \rvert 
		& \leq 17 (\beta_2 + \beta_3) + 5 \E |U| + 2 \sum_{i = 1}^n \E \lvert \xi_i(U - U^{(i)}) \rvert, 
		\label{eq-shzh16}
	\end{align}
	where $U^{(i)}$ is any random variable independent of $\xi_i$. 

	The Berry--Esseen bound in \cref{cor-nonlinear} improves \citet{Ch07N}'s result in the sense that the random variable $W$ in our result is not necessarily a partial sum of independent random variables, and our result in \cref{cor-nonlinear} can be applied to a general class of random variables. 

\end{remark}
\subsection{Proof of Theorem 4.1}%
\label{sub:proof_of_theorem_31}
In this subsection, we prove Theorem 4.1 by Stein's method. The proof is similar to that of Theorem 2.1 in \citet{Sh19B}.  To begin with, we need to prove the following lemma, which is useful in the proof of Theorem 4.1. 
\begin{lemma}
    Let $\func$ be a nondecreasing function. Then, 
    \begin{align*}
        \frac{1}{2\lambda} \biggl\lvert \E \biggl\{ D \int_{-\Delta}^0 \bigl(  \func(W + u) - \func(W) \bigr) \dd u  \biggr\}\biggr\rvert \leq \frac{1}{2\lambda} \E \bigl\{ D^{*} \Delta \func(W) \bigr\}, 
    \end{align*}
    where $D^*$ is as defined in \cref{thm01}. 
    \label{lem:psi}
\end{lemma}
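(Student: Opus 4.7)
I plan to rewrite the integrand so that its leading term becomes antisymmetric in $(X,X')$, then use the antisymmetry of $D$ together with exchangeability to replace it by a symmetrized expression, leaving only a trapezoidal-type error to be bounded pointwise. First, substituting $t = W + u$ gives
\[
\int_{-\Delta}^0 f(W+u)\,du = \int_{W'}^{W} f(t)\,dt,
\]
so the quantity inside the expectation equals $K := \int_{W'}^W f(t)\,dt - \Delta f(W)$, and
\[
\E\{DK\} = \E\Bigl\{ D\int_{W'}^W f(t)\,dt\Bigr\} - \E\{D\Delta f(W)\}.
\]

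Since both $D$ and $\Delta$ are antisymmetric in $(X,X')$, their product $D\Delta$ is symmetric, and exchangeability gives $\E\{D\Delta f(W)\} = \E\{D\Delta f(W')\} = \frac{1}{2}\E\{D\Delta(f(W) + f(W'))\}$. Substituting back,
\[
\E\{DK\} = -\E\{D M\}, \qquad M := \frac{1}{2}\Delta(f(W) + f(W')) - \int_{W'}^W f(t)\,dt,
\]
which is precisely the trapezoidal-rule error for the integral of $f$ over the interval between $W$ and $W'$. For $f$ nondecreasing and $t$ in this interval, one has $|f(t) - \frac{1}{2}(f(W)+f(W'))| \leq \frac{1}{2}|f(W) - f(W')|$, and hence
\[
|M| \leq \frac{1}{2}|\Delta|\,|f(W) - f(W')| = \frac{1}{2}\Delta(f(W) - f(W')),
\]
where the last equality uses that $(W-W')(f(W)-f(W'))\geq 0$ whenever $f$ is nondecreasing.

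Combining these ingredients with $D^* \geq |D|$ yields $|\E\{DK\}| \leq \frac{1}{2} \E\{D^*\Delta(f(W) - f(W'))\}$. A final exchangeability step, now using that $D^*$ is symmetric while $\Delta$ is antisymmetric, gives $\E\{D^*\Delta f(W')\} = -\E\{D^*\Delta f(W)\}$, so $\E\{D^*\Delta(f(W) - f(W'))\} = 2\E\{D^*\Delta f(W)\}$; dividing by $2\lambda$ recovers the claim. The main subtlety is that the naive pointwise bound $|K| \leq |\Delta|\,|f(W)-f(W')|$ is off by a factor of $2$; the missing factor is recovered by splitting $\E\{D\Delta f(W)\}$ symmetrically between $W$ and $W'$ before taking absolute values, i.e., by the trapezoidal symmetrization in the second step, without which the proof goes through only up to an extra factor of $2$.
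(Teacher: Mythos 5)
Your proof is correct, and it takes a genuinely different route from the paper's. The paper bounds the inner integral $\int_{-\Delta}^0(\func(W+u)-\func(W))\,\dd u$ between $0$ and $-\Delta(\func(W)-\func(W'))$ using monotonicity, then splits the expectation according to the sign of $D$, uses exchangeability to identify the two one-sided bounds (so only "half" of $D$ enters each), replaces $-D\I_{D<0}$ by $D^*\I_{D<0}$, and finally needs a separate little argument to absorb the $\{D=0\}$ contribution. You instead symmetrize first: writing the integrand as $\int_{W'}^{W}\func(t)\,\dd t-\Delta \func(W)$ and using that $D\Delta$ is a symmetric function of the exchangeable pair to replace $\E\{D\Delta \func(W)\}$ by $\tfrac12\E\{D\Delta(\func(W)+\func(W'))\}$, you reduce everything to the trapezoidal-rule error $M$, which monotonicity bounds pointwise by $\tfrac12\Delta(\func(W)-\func(W'))$; then $|D|\leq D^*$ and one exchangeability flip ($D^*$ symmetric, $\Delta$ antisymmetric) give exactly $\E\{D^*\Delta\func(W)\}$. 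Both arguments rest on the same structural facts (antisymmetry of $D$ and $\Delta$, symmetry of $D^*$, exchangeability, monotonicity of $\func$), and both recover the same constant — the paper via the sign-of-$D$ splitting, you via the pointwise factor $\tfrac12$ from the trapezoid midpoint, as you correctly note at the end. Your version avoids the indicator bookkeeping and the separate treatment of $\{D=0\}$, at the mild cost of the change of variables and the explicit trapezoid identity; either proof is complete and yields the lemma as stated.
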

\begin{proof}
    [Proof of \cref{lem:psi}] 
    Since $\func(\cdot)$ is nondecreasing, it follows that
	$$\Delta \bigl( \func(W) - \func(W') \bigr) \geq 0$$ 
	and 
    \begin{align*}
        0 & \geq \int_{-\Delta}^0 \bigl( \func(W + u) - \func(W) \bigr) \dd u \\
          & \geq - \Delta \bigl( \func(W) - \func(W') \bigr), 
    \end{align*}
    which yields 
    \begin{align*}
        - \E \bigl\{ D \IN{ D > 0 } \Delta \bigl( \func(W) - \func(W') \bigr) \bigr\} 
        & \leq \E \biggl\{ D \int_{-\Delta}^0 \bigl( \func(W + u) - \func(W) \bigr) \dd u \biggr\}\\
        & \leq - \E \bigl\{ D \IN{ D < 0 } \Delta \bigl( \func(W) - \func(W') \bigr) \bigr\}.  
    \end{align*}
    Recalling that $ W= \phi({X})$, $D = F({X}, {X}')$ is antisymmetric and $D^* = F^*({X}, {X}')$ is symmetric, as $({X}, {X}')$ is exchangeable,  we have 
    \begin{gather*}
        \E \bigl\{ D \IN{ D > 0 } \Delta \bigl\{ \func(W) - \func(W') \bigr\} \bigr\} = - \E \bigl\{ D \IN{ D < 0 } \Delta \bigl( \func(W) - \func(W') \bigr) \bigr\}, 
        \intertext{and}
        \E \bigl\{ D^* \IN{ D > 0 } \Delta \func(W) \bigr\} = - \E \bigl\{ D^* \IN{ D < 0 } \Delta f (W') \bigr\}.
    \end{gather*}
    Moreover, 
    as  $\E \bigl\{D^* \Delta \IN{ D = 0} \bigl( \func(W) - \func(W') \bigr)\bigr\} \geq 0$ and $\E \bigl\{ D^* \IN{ D = 0 } \Delta \func(W) \bigr\} = - \E \bigl\{ D^* \IN{ D = 0 } \Delta \varphi(W') \bigr\}$, it follows that 
    \begin{align*}
        \E \{D^* \Delta \IN{ D = 0} \func(W)\} \geq 0. 
    \end{align*}
    Therefore, 
    \begin{align*}
        \MoveEqLeft \frac{1}{2\lambda} \biggl\lvert \E \biggl\{ D \int_{-\Delta}^0 \bigl\{ \func(W + u)  - \func(W) \bigr\} \dd u\biggr\}\biggr\rvert \\
        & \leq - \frac{1}{2\lambda} \E \bigl\{D \IN{ D < 0 } \Delta \bigl( \func(W) - \func(W') \bigr)\bigr\} \\
        & \leq \frac{1}{2\lambda} \E \bigl\{D^* \IN{ D < 0 } \Delta \bigl( \func(W) - \func(W') \bigr) \bigr\}\\
        & = \frac{1}{2\lambda} \E \bigl\{D^{*} \Delta \bigl( \IN{ D > 0 } + \IN{ D <0 } \bigr) \func(W)\bigr\} \\
        & \leq \frac{1}{2\lambda} \E \{D^{*} \Delta \func(W)\}.  \qedhere
    \end{align*}
\end{proof}
\begin{proof}
    [Proof of \cref{thm01}] 
	We apply some ideas of Theorem 2.1 in \citet{Sh19B} to prove the desired result. 
    Let $z \geq 0$ be a fixed real number,  
    and $f_z$ the solution to the Stein equation: 
    \begin{align}
        f'(w) - w f(w) = \IN{  w \leq z  } - \Phi(z), 
        \label{eq:stein}
    \end{align}
    where $\Phi(\cdot)$ is the distribution function of the standard normal distribution. 
    It is well known that (see, e.g., \citet{Ch11N0})
    \begin{equ}
        \label{eq:solu}
        f_z (w) = 
        \begin{dcases}
            \sqrt{2\pi} e^{w^2/2} {\Phi(w) \bigl\{ 1 - \Phi(z) \bigr\}} & \text{if } w \leq z, \\
            \sqrt{2\pi} e^{w^2/2}{ \Phi(z) \bigl\{ 1 - \Phi(w) \bigr\} } & \text{otherwise}, 
        \end{dcases}
    \end{equ}
     
	Since $\IE \{D \vert W\} = \lambda (W + R)$,  and $D = F(X, X') $ is antisymmetric,  it follows that, for any absolutely continuous function $f$, 
    \begin{align*}
        0 & =  \IE \bigl\{D \bigl( f(W) + f(W') \bigr)\bigr\} \\
          & = 2 \IE \{D f(W)\} - \E \bigl\{D \bigl( f(W) - f(W') \bigr)\bigr\} \\
          & = 2 \lambda \IE \{(W + R) f(W)\} - \IE \biggl\{D \int_{-\Delta}^0 f'(W + u) \dd u\biggr\} . 
    \end{align*}
    Rearranging the foregoing equality, we have 
    \begin{equ}
        \label{eq:ewfw}
        \E \{W f(W)\} =\frac{1}{2\lambda}  \E \biggl\{D \int_{-\Delta}^{0} f'(W + u) \dd u\biggr\} - \E \{R f(W)\}. 
    \end{equ}    
    By \cref{eq:ewfw}, 
    \begin{align*}
        \E \bigl\{ W f_z(W) \bigr\} = \frac{1}{2\lambda} \E \biggl\{ D \int_{-\Delta}^0 f_z'(W + t) \dd t \biggr\} - \E \bigl\{ R f_z(W) \bigr\}, 
    \end{align*}
    and thus, 
    \begin{equ}
        \label{eq:j123}
         \P (W > z) - \bigl\{ 1 - \Phi(z) \bigr\}  
        & = \E \bigl\{ f_z'(W) - W f_z(W) \bigr\} \\
        & ={}  J_1 - J_2 + J_3, 
    \end{equ}
    where 
    \begin{align*}
        J_1 & = \E \biggl\{ f_z'(W) \left( 1 - \frac{1}{2\lambda} \conep{ D \Delta}{W} \right) \biggr\}, \\
        J_2 & = \frac{1}{2\lambda} \E \biggl\{ D \int_{-\Delta}^0 \left( f_z'(W + u) - f_z'(W)  \right) \dd u  \biggr\} , \\
        J_3 & = \E \bigl\{ R f_z(W) \bigr\} .
    \end{align*}
    We now bound $J_{1}$, $J_{2}$ and $J_{3}$, separately. By \citet[Lemma 2.3]{Ch11N0}, we have 
    \begin{equ}
		\|f_z\| \leq 1, \quad \|f_z'\| \leq 1, \quad  
		\sup_{z \in \mathbb{R}} \bigl\lvert w f(w) \bigr\rvert \leq 1. 
		\label{eq:4.7}
    \end{equ}
    Therefore, 
    \begin{equ}
        \label{eq:j13} 
        |J_1| & \leq \E \biggl\lvert 1 - \frac{1}{2\lambda} \conep{D \Delta}{ W } \biggr\rvert, \\
        |J_3| & \leq \E|R|.
    \end{equ}

    For $J_2,$ observe that $f_z'(w) = w f(w) - \IN{ w > z } + \bigl\{ 1 - \Phi(z) \bigr\}$, and both $wf_z(w)$ and $\IN{ w > z }$ are increasing functions (see, e.g. \citet[Lemma 2.3]{Ch11N0}), by \cref{lem:psi}, 
    \begin{equ}
        \label{eq:j2122} 
        |J_2| &\leq \frac{1}{2\lambda} \biggl\lvert \E \biggl\{ D \int_{-\Delta}^0 \left\{ (W + u)f_z(W + u) - W f_z'(W)  \right\} \dd u  \biggr\}\biggr\rvert \\
              & \quad +  \frac{1}{2\lambda} \biggl\lvert \E \biggl\{ D \int_{-\Delta}^0 \left\{ \IN{ W + u > z } - \IN{ W > z } \right\} \dd u  \biggr\}\biggr\rvert \\
              & \leq \frac{1}{2\lambda} \E \bigl\{\bigl\lvert \conep{D^{*} \Delta}{W} \bigr\rvert \bigl( |W f_z(W)| + \IN{ W > z } \bigr) \bigr\}\\
              & \leq J_{21} + J_{22}, 
    \end{equ}
    where 
    \begin{align*}
        J_{21} & =  \frac{1}{2\lambda} \E \Bigl\{\bigl\lvert \conep{D^{*} \Delta}{W} \bigr\rvert \cdot \bigl\lvert W f_z(W) \bigr\rvert\Bigr\}, \\
        J_{22} & = \frac{1}{2\lambda} \E \Bigl\{\bigl\lvert \conep{D^{*} \Delta}{W} \bigr\rvert \IN{ W > z }\Bigr\} . 
    \end{align*}
    Then, by \cref{eq:4.7}, $|J_2| \leq \frac{1}{\lambda} \E \bigl\lvert \conep{D^{*} \Delta}{W} \bigr\rvert$. This proves 
    \cref{thm01} together with \cref{eq:j13}. 
\end{proof}
\subsection{Proof of \cref{cor-nonlinear}}%
\label{sub:proof_of_cor-nonlinear}
In this subsection, we apply \cref{thm01} to prove \cref{cor-nonlinear}. By \cref{eq-corcon}, we have 
\begin{align*}
	\E \{D \mvert X\} = \lambda(T + U - R). 
\end{align*}
Let $T' = \phi(X') + U(X')$, then we have $(T, T')$ is exchangeable. Then, by \cref{thm01}, we have 
\begin{align*}
	\MoveEqLeft \sup_{z \in \R} \bigl\lvert \P [ T \leq z ] - \Phi(z) \bigr\rvert\\
	& \leq \E \bigl\lvert 1 - \frac{1}{2\lambda} \E \{ D (T - T') \mvert X \} \bigr\rvert \\
	& \quad  + \frac{1}{\lambda} \E \bigl\lvert \E \{ D^* (T - T') \mvert X \} \bigr\rvert + \E |U| + \E |R| \\
	& \leq \E \bigl\lvert 1 - \frac{1}{2\lambda} \E \{ D (\phi(X) - \phi(X')) \mvert X \} \bigr\rvert \\
	& \quad  + \frac{1}{\lambda} \E \bigl\lvert \E \{ D^* (\phi(X) - \phi(X')) \mvert X \} \bigr\rvert + \E |U| + \E |R| + \frac{3}{2\lambda} \E | D (R - R')|. 
\end{align*}
This completes the proof by recalling that $\Delta = \phi(X) - \Phi(X')$.

\section{Proofs of Theorems 2.1, 2.3 and 2.4}%
\label{sec:proofs}
In this section, we give the proofs of \cref{thm-3.1,thm-3.0,thm-3.2}.

\subsection{Proof of \cref{thm-3.1}}%
\label{sub:proof_of_theorem_3_1}
Without loss of generality, we assume that $n \geq \max(2, k^2)$, otherwise the inequality is trivial. 
We use \cref{cor-nonlinear} to prove this theorem. For each $\alpha = (\alpha(1), \dots, \alpha(k)) \in \mathcal{I}_{n,k}$, let
\begin{equ}
	\MoveEqLeft r(X_{\alpha(1)}, \dots, X_{\alpha(k)}; Y_{\alpha(1), \alpha(2)}, \dots, Y_{\alpha(k - 1), \alpha(k)}) \\
	& = \func(X_{\alpha(1)}, \dots, X_{\alpha(k)}; Y_{\alpha(1), \alpha(2)}, \dots, Y_{\alpha(k - 1), \alpha(k)}) - \sum_{j = 1}^k f_1(X_{\alpha(j)}). 
    \label{t3.1-01}
\end{equ}
Let $\sigma_n = \sqrt{\Var \{ S_{n,k}(f) \}}$, and 
\begin{align*}
	T = \frac{1}{\sigma_n}( S_{n,k}(f) - \E \{ S_{n,k}(f) \} ) = W  + U, 
\end{align*}
where 
\begin{align*}
	W & =  \frac{1}{ \sigma_n } \binom{n-1}{n-k} \sum_{i = 1}^n f_1(X_i), \\
	U & =  \frac{1}{\sigma_n}\sum_{\alpha \in \mathcal{I}_{n,k}} r (X_{\alpha(1)}, \dots, X_{\alpha(k)}; Y_{\alpha(1), \alpha(2)}, \dots, Y_{\alpha(k - 1), \alpha(k)} ).
\end{align*}

By orthogonality we have $\Cov(W, U) = 0$, and thus
\begin{equ}
	\sigma_n^2 \geq \Var(W) =  \binom{n-1}{n-k}^2 \Var \biggl( \sum_{j =1}^n f_1(X_j) \biggr) = \binom{n}{k}^2 \frac{k^2 \sigma_1^2}{n}. 
    \label{t31-sig}
\end{equ}
 
Let $(X_1', \dots, X_n')$ be an independent copy of $(X_1, \dots, X_n)$.
For each $i = 1, \dots, n$, define $X^{(i)} = (X_1^{(i)}, \dots, X_n^{(i)})$ where 
\begin{align*}
	X_j^{(i)} = 
	\begin{cases}
		X_j & \text{if $j \neq i$,}\\
		X_i' & \text{if $j = i$,}
	\end{cases}
\end{align*}
and let 
\begin{align*}
	U^{(i)} = \frac{1}{\sigma_n}\sum_{\alpha \in \mathcal{I}_{n,k}} r ( X_{\alpha(1)}^{(i)}, \dots, X_{\alpha(k)}^{(i)}; Y_{\alpha(1), \alpha(2)}, \dots, Y_{\alpha(k - 1), \alpha(k)} ).
\end{align*}
The following lemma provides the upper bounds of $\E \{ R_1^2 \}$ and $\E \{ (R_1 - R_1^{(i)})^2 \}$.
\begin{lemma}
	\label{lem3.3}
	For $n \geq 2$ and $k \geq 2$,  
	\begin{align}
		\E \{ U^2 \} & \leq \frac{(k - 1)^2 \tau^{2}}{2 (n - 1) \sigma_1^2} \label{l3.3-aa} \\
		\E \{ ( U - U^{(i)} )^2 \} & \leq \frac{2 (k - 1)^2 \tau^{2}}{n(n - 1) \sigma_1^2}. \label{l3.3-bb}
	\end{align}
\end{lemma}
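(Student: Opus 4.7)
The plan is to exploit an orthogonality property inherited from the Hoeffding decomposition. Assuming without loss of generality that $\E\{f\} = 0$ (the centering $f \mapsto f - \E\{f\}$ leaves $T$, $\sigma_n$, and $f_1$ unchanged and subtracts exactly the right constant from $U$), the residual
\begin{align*}
r_\alpha := f(X_{\alpha(1)}, \dots, X_{\alpha(k)}; Y_{\alpha(1),\alpha(2)}, \dots, Y_{\alpha(k-1),\alpha(k)}) - \sum_{j=1}^k f_1(X_{\alpha(j)})
\end{align*}
satisfies $\E\{r_\alpha\}=0$ and, crucially, $\E\{r_\alpha \mvert X_i\} = 0$ for every $i$ appearing in $\alpha$, because $f_1(x) = \E\{f \mvert X_1 = x\}$ cancels exactly the first-order projection of $f$ onto $X_i$ while the other summands $f_1(X_{\alpha(j)})$ with $\alpha(j) \neq i$ are independent of $X_i$ with mean zero.

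For \cref{l3.3-aa}, I would expand $\E\{U^2\} = \sigma_n^{-2} \sum_{\alpha, \alpha' \in \mathcal{I}_{n,k}} \E\{r_\alpha r_{\alpha'}\}$ and argue that cross-terms with $|\alpha \cap \alpha'| \leq 1$ vanish: in the empty-intersection case the two residuals depend on disjoint sets of $X$- and $Y$-variables; when $\alpha \cap \alpha' = \{i\}$, conditioning on $X_i$ renders them independent (sharing a $Y_{j,l}$ would require sharing both $j$ and $l$) and each factor has conditional mean zero. For pairs with $m := |\alpha \cap \alpha'| \geq 2$, Cauchy--Schwarz together with $\|r\|_2 \leq \|f\|_2 \leq \tau$ bounds each $|\E\{r_\alpha r_{\alpha'}\}|$ by $\tau^2$. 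Counting such pairs as $\binom{n}{k}\binom{k}{m}\binom{n-k}{k-m}$ and invoking the variance lower bound $\sigma_n^2 \geq \binom{n}{k}^2 k^2 \sigma_1^2 / n$ from \cref{t31-sig}, the dominant $m=2$ term contributes at most $n\binom{k}{2}\binom{n-k}{k-2}\tau^2/(\binom{n}{k}k^2\sigma_1^2)$; combined with the elementary ratio bound $\binom{n-k}{k-2}/\binom{n}{k} \leq k(k-1)/(n(n-1))$, this simplifies to precisely $(k-1)^2\tau^2/(2(n-1)\sigma_1^2)$.

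For \cref{l3.3-bb} the argument runs in parallel but restricted to $\alpha, \alpha' \ni i$, since $r_\alpha = r_\alpha^{(i)}$ when $i \notin \alpha$. Pairs with $\alpha \cap \alpha' = \{i\}$ again contribute zero: conditioning on $(X_i, X_i')$ makes the differences $r_\alpha - r_\alpha^{(i)}$ and $r_{\alpha'} - r_{\alpha'}^{(i)}$ conditionally independent, and each has conditional mean zero by the projection property applied to $r_\alpha$ and $r_\alpha^{(i)}$ separately. For overlaps of size $\geq 2$, exchangeability gives $\|r_\alpha - r_\alpha^{(i)}\|_2^2 \leq 2\|r\|_2^2 \leq 2\tau^2$, and the count of overlapping pairs both containing $i$ is $\binom{n-1}{k-1}\binom{k-1}{m-1}\binom{n-k}{k-m}$. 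The extra ratio $\binom{n-1}{k-1}/\binom{n}{k} = k/n$ supplies the additional factor that sharpens the bound to the order $1/(n(n-1))$ claimed in \cref{l3.3-bb}.

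The main obstacle is combinatorial bookkeeping: extracting the explicit constants $(k-1)^2/(2(n-1))$ and $2(k-1)^2/(n(n-1))$ requires sharp control of ratios of binomial coefficients, and one must carefully verify that the subleading $m \geq 3$ overlap contributions can be absorbed into the leading $m=2$ term under the hypothesis $n \geq k^2$. The conceptual content (orthogonality via Hoeffding projection plus a conditioning argument) is short; the arithmetic is what demands care.
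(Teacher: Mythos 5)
Your route is the same as the paper's: the Hoeffding orthogonality $\E\{r_\alpha \mid X_i\}=0$ for $i\in\{\alpha\}$, the vanishing of covariances for overlaps of size at most one (conditioning on the shared $X_i$, respectively on $(X_i,X_i')$ for \cref{l3.3-bb}), the Cauchy--Schwarz bound $\tau^2$ (respectively $2\tau^2$) for overlaps of size at least two, and the lower bound \cref{t31-sig} for $\sigma_n^2$ are exactly the ingredients of the paper's proof of \cref{lem3.3}. The only place you stop short is the combinatorial bookkeeping you flag at the end, and there your proposed remedy is not quite right: the hypothesis $n\ge k^2$ is not available (the lemma is stated for all $n\ge2$, $k\ge2$), and in your accounting the $m=2$ term alone already exhausts the entire constant $(k-1)^2/(2(n-1))$, so there is nothing left into which the $m\ge3$ terms could be ``absorbed''.

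The clean way to close this, valid for all $n\ge2$ and $k\ge2$, is to bound all overlaps $m\ge2$ simultaneously by overcounting: for a fixed $\alpha$, every $\alpha'$ sharing at least two indices with $\alpha$ is obtained (at least once) by choosing two shared indices in $\binom{k}{2}$ ways and then the remaining $k-2$ indices of $\alpha'$ freely among the other $n-2$, whence
\[
\sum_{m=2}^{k}\binom{k}{m}\binom{n-k}{k-m}\;\le\;\binom{k}{2}\binom{n-2}{k-2},
\qquad
\frac{\binom{n-2}{k-2}}{\binom{n}{k}}=\frac{k(k-1)}{n(n-1)} .
\]
Combined with \cref{t31-sig} this gives exactly $(k-1)^2\tau^2/(2(n-1)\sigma_1^2)$, i.e. \cref{l3.3-aa}; note that your own evaluation of the $m=2$ term already replaced $\binom{n-k}{k-2}$ by $\binom{n-2}{k-2}$, and the slack in that replacement is precisely what the overcount redistributes to the higher overlaps. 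The same device proves \cref{l3.3-bb}: for $\alpha,\alpha'\ni i$ with overlap at least two, choose the second shared index among the $k-1$ entries of $\{\alpha\}\setminus\{i\}$ and the remaining $k-2$ indices of $\alpha'$ freely, giving at most $(k-1)\binom{n-2}{k-2}$ choices of $\alpha'$ per $\alpha$, and then $\binom{n-1}{k-1}/\binom{n}{k}=k/n$ together with the per-pair bound $2\tau^2$ yields $2(k-1)^2\tau^2/(n(n-1)\sigma_1^2)$. With this substitution your argument is complete and coincides with the paper's proof.
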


The proof of \cref{lem3.3} is put in the appendix. 

Now, we apply \cref{cor-nonlinear} to prove the Berry--Esseen bound for $T$. To this end, let $\xi_i = \sigma_n^{-1}f_1(X_i)$ for each $1 \leq i \leq n$. 
Let $I$ be a random index uniformly distributed over $\{1, \dots, n\}$, which is independent of all others.
Let 
\begin{align*}
	D = \Delta = \frac{1}{\sigma_n} \binom{n - 1}{n - k} \bigl( f_1(X_I) - f_1(X_I') \bigr), 
\end{align*}
then it follows that 
\begin{align*}
	\E \{ D \mvert W \} = \frac{1}{n} W.  
\end{align*}
Thus, \cref{eq-corcon} is satisfied with $\lambda = 1/n$ and $R = 0$. Moreover, we have  
\begin{align*}
	\frac{1}{2\lambda}\E \{ D\Delta \mvert X \}
	& = \frac{1}{2\sigma_n^2} \binom{n-1}{n-k}^2 \sum_{i = 1}^n \bigl( f_1(X_i) - f_1(X_i') \bigr)^2, \\
	\frac{1}{\lambda}\E \{ |D| \Delta \mvert X \}
	& = \frac{1}{\sigma_n^2} \binom{n-1}{n-k}^2 \sum_{i = 1}^n \bigl( f_1(X_i) - f_1(X_i') \bigr) \bigl\lvert f_1(X_i) - f_1(X_i') \bigr\rvert. 
\end{align*}
Also, 
\begin{align*}
	\frac{1}{2\lambda}\E \{ D \Delta \} & = \E \{W^2\} = 1 - \E \{U^2\}, \quad \E \{|D| \Delta\} = 0. 
\end{align*}

Therefore, by the Cauchy inequality and \cref{lem3.3}, we have for $n \geq \max(2, k^2)$, 
\begin{align*}
	\MoveEqLeft \E \biggl\lvert \frac{1}{2\lambda} \E \{D \Delta \mvert X\} - 1 \biggr\rvert\\
	& \leq \E \biggl\lvert \frac{1}{2\lambda} \E \{D \Delta \mvert X\} - \frac{1}{2\lambda} \E \{D \Delta\} \biggr\rvert + \E \{U^2\} \\
	& \leq \frac{1}{2\sigma_n^2} \binom{n-1}{n-k}^2 \biggl( \Var \biggl\{ \sum_{i=1}^n \bigl( f_1(X_i) - f_1(X_i') \bigr)^2 \biggr\} \biggr)^{1/2} + \frac{(k-1)^2 \tau^2}{2(n-1) \sigma_1^2}\\
	& \leq \frac{2 \tau^2}{\sqrt{n}\sigma_1^2 } + \frac{(k-1) \tau^2}{\sqrt{n} \sigma_1^2}  \leq \frac{(k+1) \tau^2}{\sqrt{n} \sigma_1^2}, 
\end{align*}
where we used \cref{t31-sig} in the last line. 
Using the same argument, we have for $n \geq \max \{2, k^2\}$,  
\begin{align*}
	\MoveEqLeft \E \biggl\lvert \frac{1}{\lambda} \E \{|D| \Delta \mvert X\}  \biggr\rvert
	\\
	& \leq \frac{1}{\sigma_n^2} \binom{n-1}{n-k}^2 \biggl( \Var \biggl\{ \sum_{i=1}^n \bigl( f_1(X_i) - f_1(X_i') \bigr) \biggr\} \biggr)^{1/2}\\
	& \leq \frac{4 \tau^2}{\sqrt{n}\sigma_1^2 }. 
\end{align*}

Now we give the bounds for $U $ and $U^{(i)}$. We have two cases. 
For the case where $k = 1$, then it follows that 
$U = 0$ and $U^{(i)} = 0$. 
As for $k \geq 2$, 
noting that  $(n - 1)^{-1/2} \leq 2 n^{-1/2} $ for $n \geq 2$,
by \cref{lem3.3} and the Cauchy inequality, we have 
\begin{align*}
	\E \lvert U \rvert & \leq \frac{0.71 (k - 1) \tau}{(n - 1)^{1/2} \sigma_1} \leq \frac{2 (k - 1) \tau}{\sqrt{n}  \sigma_1}  , 
\end{align*}
and 
\begin{align*}
	\sum_{i = 1}^n \E \{ \lvert (\xi_i - \xi_i') (U - U^{(i)} )\rvert \}
	& \leq \frac{2.84 (k - 1) \tau}{(n - 1)^{1/2} \sigma_1} \leq \frac{6 (k - 1) \tau}{\sqrt{n}  \sigma_1}.
\end{align*}
By \cref{cor-nonlinear} and noting that $\sigma_1^2 \leq \E \{ \func( X_{\{\alpha\}} ; Y_{\{\alpha\}})^2 \} \leq \tau^{1/2}$, we have 
\begin{align*}
	\sup_{z \in \R} \bigl\lvert \P [ T \leq z ] - \Phi(z) \bigr\rvert 
	& \leq   \frac{(k+5)\tau^{2}}{\sqrt{n} \sigma_1^2 } + \frac{11(k - 1) \tau}{\sqrt{n}  \sigma_1}  \\
	& \leq \frac{12k \tau^{2}}{\sqrt{n} \sigma_1^2 } .
\end{align*}
This proves \cref{t3.1-ab}. 


\subsection{Proof of Theorem 2.2}%
\label{sub:p3.2}

We first prove a proposition for the Hoeffding decomposition. 
\begin{proposition}
	\label{lem-psiG}
	For $A \subset [n], B \subset [n]_2$ such that $(A, B) \neq (\emptyset, \emptyset)$, and for any $\tilde{A}, \tilde{B}$ such that $\tilde{A} \subset A$ and $\tilde{B} \subset B$ but $(\tilde{A}, \tilde{B}) \neq (A,B)$, we have 
	\begin{equ}
		\IE \bigl\{ f_{A, B}(X_{A}; Y_{B}) \bigm\vert X_{\tilde{A}}, Y_{\tilde{B}}\bigr\} = 0. 
        \label{eq-pp1}
	\end{equ}
\end{proposition}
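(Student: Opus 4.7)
The plan is to unfold the definition \eqref{eq-fAB} of $f_{A,B}$, push the outer conditional expectation through, and then recognize an inclusion--exclusion identity that forces the sum to vanish.

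First I would apply the tower property together with independence. For any $A' \subset A$, $B' \subset B$, write the inner term in \eqref{eq-fAB} as $h(X_{A'}, Y_{B'}) = \E\{f(X_{[k]};Y_{[k]_2}) \mid X_{A'}, Y_{B'}\}$. Because all coordinates of $X$ are i.i.d., all coordinates of $Y$ are i.i.d., and $X$ is independent of $Y$, the variables $X_{A' \setminus \tilde A}$ and $Y_{B' \setminus \tilde B}$ are independent of $(X_{\tilde A}, Y_{\tilde B})$. Integrating them out gives
\begin{equation*}
    \E\bigl\{ h(X_{A'}, Y_{B'}) \bigm\vert X_{\tilde A}, Y_{\tilde B} \bigr\}
    = \E\bigl\{ f(X_{[k]};Y_{[k]_2}) \bigm\vert X_{A' \cap \tilde A}, Y_{B' \cap \tilde B} \bigr\},
\end{equation*}
which I would justify by noting that the right-hand side is the conditional expectation of $h(X_{A'}, Y_{B'})$ with respect to the coarser $\sigma$-algebra generated by $(X_{A' \cap \tilde A}, Y_{B' \cap \tilde B})$ (a second application of the tower property).

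Next I would reindex. Denoting $A^\star = A' \cap \tilde A$ and $B^\star = B' \cap \tilde B$, every $A' \subset A$ with $A' \cap \tilde A = A^\star$ is uniquely of the form $A' = A^\star \cup S$ with $S \subset A \setminus \tilde A$, and similarly for $B'$ with $T \subset B \setminus \tilde B$. Grouping the terms in \eqref{eq-fAB} by $(A^\star, B^\star)$, the conditional expectation becomes
\begin{equation*}
    \sum_{A^\star \subset \tilde A} \sum_{B^\star \subset \tilde B} (-1)^{|A|+|B|-|A^\star|-|B^\star|} \,\E\{f \mid X_{A^\star}, Y_{B^\star}\} \Bigl(\sum_{S \subset A \setminus \tilde A} (-1)^{|S|}\Bigr) \Bigl(\sum_{T \subset B \setminus \tilde B} (-1)^{|T|}\Bigr).
\end{equation*}

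Since $(\tilde A, \tilde B) \subsetneq (A, B)$, at least one of $A \setminus \tilde A$ or $B \setminus \tilde B$ is nonempty. The standard identity $\sum_{S \subset C}(-1)^{|S|} = 0$ whenever $C \neq \emptyset$ then makes the corresponding factor vanish, giving the claim.

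The one place that requires care, and which I would flag as the main obstacle, is the first step: the argument of $h$ may involve indices $(i,j) \in B'$ with $i$ or $j$ outside $A'$, and we must be sure that the independence structure still licenses integrating $X_{A' \setminus \tilde A}$ and $Y_{B' \setminus \tilde B}$ out from inside the conditional expectation. This is where the full mutual independence between $X$ and $Y$, and between distinct entries of each, is essential. Once that step is in place, everything else is bookkeeping with signed subset sums.
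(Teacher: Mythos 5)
Your proof is correct, but it follows a genuinely different route from the paper's. The paper argues by induction on $|A|+|B|$: it regroups the defining sum \cref{eq-fAB} so that the terms indexed by pairs $(A',B')$ with $A'\subset\tilde A$, $B'\subset\tilde B$ recombine into $\E\{f\mid X_{\tilde A},Y_{\tilde B}\}$, which cancels against the conditional expectation of $\E\{f\mid X_{A},Y_{B}\}$ given $(X_{\tilde A},Y_{\tilde B})$, while the remaining terms $f_{A',B'}$ vanish under the conditioning by the induction hypothesis. You instead condition each term of \cref{eq-fAB} directly, using the mutual independence of the coordinates to collapse $\E\bigl\{\E\{f\mid X_{A'},Y_{B'}\}\bigm\vert X_{\tilde A},Y_{\tilde B}\bigr\}=\E\{f\mid X_{A'\cap\tilde A},Y_{B'\cap\tilde B}\}$, and then resum over the decomposition $A'=A^\star\cup S$, $B'=B^\star\cup T$, so that the alternating factors $\sum_{S\subset A\setminus\tilde A}(-1)^{|S|}$ and $\sum_{T\subset B\setminus\tilde B}(-1)^{|T|}$ force the whole expression to vanish once $(\tilde A,\tilde B)\subsetneq(A,B)$. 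Your version is self-contained and makes the cancellation mechanism completely explicit at the cost of some sign bookkeeping; the paper's induction is shorter to write but leaves implicit exactly the independence/tower step you isolate — it is needed both to identify $\E\bigl\{\E\{f\mid X_A,Y_B\}\bigm\vert X_{\tilde A},Y_{\tilde B}\bigr\}$ with $\E\{f\mid X_{\tilde A},Y_{\tilde B}\}$ and to apply the induction hypothesis to terms $f_{A',B'}$ with $(A',B')$ not contained in $(\tilde A,\tilde B)$, where conditioning on $(X_{\tilde A},Y_{\tilde B})$ must first be reduced to conditioning on $(X_{A'\cap\tilde A},Y_{B'\cap\tilde B})$. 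Your flagged point of care is the right one, and your justification of it (integrating out the coordinates independent of the conditioning variables, then the tower property) is sound.
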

\begin{proof}
If $|A| + |B| = 1$, then for $(\tilde{A}, \tilde{B}) = (\emptyset, \emptyset)$, by definition, 
\begin{align*}
	\IE \bigl\{ f_{A, B}(X_{A}; Y_{B}) \bigm\vert X_{\tilde{A}}, Y_{\tilde{B}}\bigr\}
	& = \E \bigl\{ f_{A, B}(X_{A}; Y_{B})\bigr\}\\
	& = \E \{ f(X_{[k]}; Y_{[k]_2}) \} - \E \{ f(X_{[k]}; Y_{[k]_2}) \} = 0.
\end{align*}
We prove the proposition by induction. Assume that \cref{eq-pp1} holds for $1 \leq |A| + |B| \leq  m$. 
Let $\mathcal{A}_{\tilde{A}, \tilde{B}} = \{ (A', B') : A' \subset \tilde{A} , B' \subset \tilde{B} , \}$ and let $\mathcal{A}_{\tilde{A},\tilde{B}}^c = \{ (A', B') : A' \subset A, B' \subset B, (A', B') \neq (A, B) \} \setminus \mathcal{A}_{\tilde{A}, \tilde{B}}^c$. 
Reordering \cref{eq-fAB} by the inclusive-exclusive formula we have 
\begin{align*}
	f_{A,B}(X_{A}; Y_{B})
	& =  \E \{ f(X_{[k]}; Y_{[k]_2}) \vert X_{A}, Y_{B} \}
	- \sum_{ |A'| + |B'| < |A| + |B| } f_{A', B'} (X_{A'} ; Y_{B'})\\
	& = \E \{ f(X_{[k]}; Y_{[k]_2}) \vert X_{A}, Y_{B} \}
	- \sum_{(A', B') \in  \mathcal{A}_{\tilde{A},\tilde{B}} } f_{A', B'} (X_{A'} ; Y_{B'}) \\
	& \quad - \sum_{(A', B') \in  \mathcal{A}_{\tilde{A},\tilde{B}}^c } f_{A', B'} (X_{A'} ; Y_{B'}) \\
	& = \E \{ f(X_{[k]}; Y_{[k]_2}) \vert X_{A}, Y_{B} \} - \E \{ f(X_{[k]}; Y_{[k]_2}) \vert X_{\tilde{A}}, Y_{\tilde{B}} \} \\
	& \quad - \sum_{(A', B') \in  \mathcal{A}_{\tilde{A},\tilde{B}}^c } f_{A', B'} (X_{A'} ; Y_{B'}).
\end{align*}
By the induction assumption, we have 
\begin{align*}
	\sum_{(A', B') \in  \mathcal{A}_{\tilde{A},\tilde{B}}^c } \E \{ f_{A', B'} (X_{A'} ; Y_{B'}) \vert X_{\tilde{A}}, Y_{\tilde{B}} \} = 0. 
\end{align*}
Then, the desired result follows. 
\end{proof}

Let 
\begin{align*}
	\mathcal{A}_{n,\ell} = \{ \alpha = (\alpha(1), \dots, \alpha(\ell)) : 1 \leq \alpha(1) \neq \dots \neq \alpha(\ell) \leq n \}. 
\end{align*}
Then, $\mathcal{I}_{n,\ell} \subset \mathcal{A}_{n, \ell}$.
For $A \subset [\ell]$ and $B \in [\ell]_2$ and $\alpha = (\alpha(1), \dots, \alpha(\ell)) \in \mathcal{A}_{n,\ell}$, write 
\begin{align*}
	\alpha (A) & =  ( \alpha(i) )_{ i \in A }, & \alpha (B) & =  \bigl( ( \alpha(i), \alpha(j))  \bigr)_{(i,j) \in B}, \\
	X_{\alpha (A)} & =  ( X_{i} )_{ i \in \alpha(A) }, & Y_{\alpha (B)} & =  ( Y_{ i, j} )_{ (i,j) \in \alpha(B)}. 
\end{align*}
Moreover, for any $\alpha \in \mathcal{I}_{n,\ell}$ and $f_{A,B} : \mathcal{X}^{|A|} \times \mathcal{Y}^{|B|} \to \R$, let  
\begin{align*}
	\tilde{S}_{n,\ell} (f_{A,B}) = \sum_{\alpha \in \mathcal{A}_{n,\ell}} f_{A,B}( X_{\alpha(A)}; Y_{\alpha(B)} ), 
\end{align*}
and similarly, $S_{n,\ell}(f_{A,B})$ can be represented as   
\begin{math}
	\sum_{\alpha \in \mathcal{I}_{n,\ell}} f_{A,B} ( X_{\alpha(A)}; Y_{\alpha(B)} ).  
\end{math}

Let $(Y_{1,1}', \dots, Y_{n-1, n}')$ be an independent copy of $Y = (Y_{1,1}, \dots, Y_{n-1,n})$. 
For any $(i,j) \in \mathcal{A}_{n,2}$, let $Y^{(i,j)} = (Y_{1,1}^{(i,j)}, \dots, Y_{n-1, n}^{(i,j)})$ with
\begin{align*}
	Y_{p, q}^{(i,j)} = 
	\begin{cases}
		Y_{p, q}  & \text{ if $\{p, q\} \neq \{i, j\}$,}\\
		Y_{p, q}' & \text{ if $\{p, q\} = \{i, j\}$,}
	\end{cases}
	\quad 
	\text{for }
	(p, q) \in \mathcal{I}_{n,2}.
\end{align*}
Then, it follows that for each $(i,j) \in \mathcal{A}_{n,2}$, $((X, Y), (X, Y^{(i,j)}))$ is an exchangeable pair. 
For any $B \subset [n]_2$, let  
$Y_B^{(i,j)} = (Y_{p,q}^{(i,j)})_{ (p,q) \in B}$. 
For any $A \subset [\ell]$, $B \subset [\ell]_2$, $\alpha = ( \alpha(1), \dots, \alpha(\ell) )\in \mathcal{I}_{n,\ell}$ and  $f_{A,B} : \mathcal{X}^{ |A| } \times \mathcal{Y}^{|B|} \to \R$, define 
\begin{align*}
	Y_{\alpha (B)}^{(i,j)} & =  ( Y_{\alpha(p), \alpha(q)}^{(i,j)} )_{ (p,q) \in B }, \\
	S_{n,\ell}^{(i,j)} (f_{A,B}) & =  \sum_{\alpha \in \mathcal{I}_{n,\ell}} f_{A,B} (X_{\alpha (A)}; Y_{\alpha(B)}^{(i,j)}), \\   
	\tilde{S}_{n,\ell}^{(i,j)} (f_{A,B}) & =  \sum_{\alpha \in \mathcal{A}_{n,\ell}} f_{A,B}( X_{\alpha(A)}, Y_{\alpha(B	)}^{(i,j)}) . 
\end{align*}


	Let $\func_{(\ell)}$ be defined as in \cref{eq-psiell}, and it follows that 
	\begin{align*}
		\func = \sum_{\ell = 0}^{k} \func_{(\ell)}, \quad \func_{(0)} = \E \{ \func(X_{[k]}; Y_{[k]_2}) \}, \quad S_{n,k} (\func_{(0)}) = \E \{ S_{n,k}(\func) \}.
	\end{align*}	
	Moreover, by
	assumption, as $\func$ has principal degree $d$, and it follows that $\func_{(\ell)} \equiv 0$ for $\ell = 1, \dots, d - 1$.  
	Let $\sigma_{n} = ( \Var \{ S_{n,k}(\func) \} )^{1/2}$ and $\sigma_{n,\ell} = (\Var \{ S_{n,k} (\func_{(\ell)}) \})^{1/2}$. 
	The next lemma estimates the upper and lower bounds of $\sigma_n^2$ and $\sigma_{n,d}^2$. 
The proof is similar to that of Lemma 4 of \citet{janson1991asymptotic}, and we omit the details. 
\begin{lemma}
	\label{lem-3.w}
	We have for each $(i,j) \in \mathcal{A}_{n,2}$ and $d \leq \ell \leq k$, 
	\begin{align}
		& \sigma_{n,\ell}^2  = \sum_{(A, B): v_{A,B} = \ell} \frac{n!(n-\ell)!\sigma_{A,B}^2}{ (n-k)!^2 (k-\ell)!^2\lvert \Aut(G_{A,B})\rvert } \leq C n^{2k-\ell}\tau^2 , \label{l3.w-aa} \\
		& \sigma_{n}^2  = \sum_{\ell = d}^{k}\sum_{(A,B): v_{A,B} = \ell} \frac{n!(n-\ell)!\sigma_{A,B}^2}{ (n-k)!^2 (k-\ell)!^2\lvert \Aut(G_{A,B})\rvert }  \leq C n^{2k-d}\tau^2 , \label{l3.w-bb}\\
		& \E \{ (S_{n,k}(\func_{(\ell)}) - S_{n,k}^{(i,j)}(\func_{(\ell)}))^2 \}  \leq C n^{2k - \ell - 2} \tau^2, \label{l3.w-cc}
	\end{align}
	and 
	\begin{equ}
		\sigma_{n}^2 \geq \sigma_{n,d}^2 \geq c n^{2k - d} \sigma_{\min}^2.
        \label{l3.w-dd}
	\end{equ}
	where $\lvert \Aut(G) \rvert$ is the number of the automorphisms of $G$, and  $c, C>0$ are some absolute constant.
\end{lemma}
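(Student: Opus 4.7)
The plan is to mimic Lemma~4 of \citet{janson1991asymptotic}, with \cref{lem-psiG} providing the orthogonality backbone. The argument proceeds in three steps.

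First, I would establish that for any $\alpha, \beta \in \mathcal{I}_{n,k}$ and any index pairs $(A, B), (A', B')$ in the Hoeffding decomposition, the covariance $\Cov(f_{A,B}(X_{\alpha(A)}; Y_{\alpha(B)}), f_{A',B'}(X_{\beta(A')}; Y_{\beta(B')}))$ vanishes unless the induced labeled subgraphs on $[n]$ agree, that is, $\alpha(A) = \beta(A')$ and $\alpha(B) = \beta(B')$. Indeed, whenever a strict containment holds on one side, conditioning that factor on the other's variables falls under the hypothesis of \cref{lem-psiG} and hence vanishes. This immediately yields the orthogonal decomposition $\sigma_n^2 = \sum_{\ell = d}^k \sigma_{n,\ell}^2$, reducing \eqref{l3.w-bb} to \eqref{l3.w-aa}.

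Second, for each fixed $(A, B)$ with $v_{A,B} = \ell$, I would count the diagonal pairs $(\alpha, \beta) \in \mathcal{I}_{n,k}^2$ whose induced labeled subgraphs on $[n]$ coincide. Choosing the $\ell$ host vertices in $\binom{n}{\ell}$ ways, realizing $G_{A,B}$ on them in $\ell! / |\Aut(G_{A,B})|$ distinct labeled ways, and independently extending $\alpha$ and $\beta$ to ordered $k$-indices in $\binom{n-\ell}{k-\ell}^2$ ways each, produces the coefficient $n!(n-\ell)! / ((n-k)!^2 (k-\ell)!^2 |\Aut(G_{A,B})|)$ in \eqref{l3.w-aa}. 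The estimate $\sigma_{A,B}^2 \leq C\tau^2$ is a direct consequence of the inclusion-exclusion definition \eqref{eq-fAB} and Jensen's inequality; combined with the asymptotic $n!(n-\ell)!/((n-k)!^2(k-\ell)!^2) = O(n^{2k-\ell})$, this yields the upper bound in \eqref{l3.w-aa}, and summing over $\ell \in \{d, \ldots, k\}$ gives \eqref{l3.w-bb}.

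Third, for the resampling estimate \eqref{l3.w-cc}, the key observation is that $f_{A,B}(X_{\alpha(A)}; Y_{\alpha(B)}) - f_{A,B}(X_{\alpha(A)}; Y_{\alpha(B)}^{(i,j)})$ vanishes unless $(i,j) \in \alpha(B)$, which restricts the outer sum to $O(n^{k-2})$ terms per $(A,B)$. Running the same orthogonality-plus-counting argument on the squared difference then reduces the coefficient by $n^{-2}$ compared to \eqref{l3.w-aa}, yielding $C n^{2k - \ell - 2} \tau^2$. Finally, \eqref{l3.w-dd} follows from the orthogonal inequality $\sigma_n^2 \geq \sigma_{n,d}^2$ together with retaining only one term $(A,B)$ with $\sigma_{A,B} = \sigma_{\min}$ in \eqref{l3.w-aa}. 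The main obstacle is the bookkeeping in the second step: obtaining an equality (rather than merely an upper bound) requires a precise match between ordered multi-indices in $\mathcal{I}_{n,k}$ and unordered labeled subgraphs of $K_n$, which is where the automorphism factor $|\Aut(G_{A,B})|$ enters, and one must be careful to avoid double-counting when the same labeled subgraph arises from different $(A,B)$.
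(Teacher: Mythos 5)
The paper gives no proof of this lemma at all---it simply defers to Lemma~4 of Janson and Nowicki (1991)---and your argument reconstructs exactly that route: orthogonality of the Hoeffding components via \cref{lem-psiG}, labelled-copy counting with the $\lvert\Aut(G_{A,B})\rvert$ factor producing the coefficient $n!(n-\ell)!/((n-k)!^2(k-\ell)!^2\lvert\Aut(G_{A,B})\rvert)$, the restriction to $(i,j)\in\alpha(B)$ which drops the order by $n^{-2}$ for \eqref{l3.w-cc}, and retention of a single principal term for \eqref{l3.w-dd}. This is essentially the same approach and all the bounds (which is what the paper actually uses downstream) come out at the correct orders; the one delicate point, which you yourself flag, is the bookkeeping of cross terms between different pairs $(A,B)\neq(A',B')$ realizing the same labelled copy of a principal graph, which only affects constants depending on $k$ and $d$ and is absorbed into $C$ and $c$.
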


For any $A \subset [k]$ and $B \subset [k]_2$, let
\begin{align*}
	\mu_{A,B} &\coloneqq \frac{1}{ \lvert \Aut(G_{A,B}) \rvert |B| }\binom{n - v_{A,B}}{n - k} , \\ 
	\nu_{A,B} &\coloneqq |B| \times \mu_{A,B} =  \frac{1}{ \lvert \Aut(G_{A,B}) \rvert }\binom{n - v_{A,B}}{n - k} ,  
\end{align*}
and for any $\alpha \in \mathcal{A}_{n,\ell}$ ($\ell = 1, \dots, k$), let 
\begin{align*}
	\xi_{\alpha(A,B)}^{(i,j)}	= f_{A,B}( X_{\alpha(A)}; Y_{\alpha(B)} ) - f_{A,B}(X_{\alpha(A)}; Y_{\alpha(B)}^{(i,j)}).
\end{align*}

Recall that $G_{A,B}$ is the graph generated by $(A,B)$. 
For any $(A_j, B_j)$ for $j  = 1, 2$, we simply write $v_j = v_{A_j, B_j}$ as the number of nodes of the graph $G_{A_j, B_j}$. Recall that $\mathcal{G}_{f,d} = \{ (A, B) : A \subset [d] , B \subset [d]_2, \sigma_{A,B} > 0, v_{A,B} = d \}$ and we similarly define $\mathcal{G}_{f,d +
	1} = \{
(A, B) : A \subset [d + 1] , B \subset [d + 1]_2, \sigma_{A,B} > 0, v_{A,B} = d + 1 \}$.
We have the following lemmas. 
\begin{lemma}
	\label{lem5.3}
	For all $(A_1, B_1)$, $(A_2, B_2) \in \mathcal{G}_{f,d}$  such that $G_{A_1, B_1}$ and $G_{A_2, B_2}$ are connected, we have 
	\begin{align*}
		\Var \Biggl\{ \sum_{(i,j) \in \mathcal{A}_{n,2}} \biggl(\sum_{\alpha_1 \in \mathcal{A}_{n,d}^{(i,j)}} \xi_{\alpha_1(A_1, B_1)}^{(i,j)}\biggr) \biggl(\sum_{\alpha_2 \in \mathcal{A}_{n,d}^{(i,j)}} \xi_{\alpha_2(A_2, B_2)}^{(i,j)}\biggr) \Biggr\} \leq C n^{2d - 1}
	\tau^{4}. 
	\end{align*}
\end{lemma}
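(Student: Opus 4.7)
The strategy is to expand the variance as a sum of covariances indexed by pairs of edges $(i_1, j_1), (i_2, j_2) \in \mathcal{A}_{n,2}$ and four index tuples $\alpha_1, \alpha_1', \alpha_2, \alpha_2' \in \mathcal{A}_{n,d}$, and then use the Hoeffding orthogonality of Proposition \ref{lem-psiG}, together with the antisymmetry of $\xi_{\alpha(A,B)}^{(i,j)}$ under the swap $Y_{i,j} \leftrightarrow Y_{i,j}'$, to discard all but configurations in which the four vertex tuples are heavily coupled. The connectedness of $G_{A_1, B_1}$ and $G_{A_2, B_2}$ will then upgrade each forced coincidence into a full collapse of degrees of freedom, leaving $O(n^{2d-1})$ effective terms.

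First I would observe that $\xi_{\alpha(A, B)}^{(i,j)} = f_{A,B}(X_{\alpha(A)}; Y_{\alpha(B)}) - f_{A,B}(X_{\alpha(A)}; Y_{\alpha(B)}^{(i,j)})$ is antisymmetric in $(Y_{i,j}, Y_{i,j}')$, hence $\E\{\xi_{\alpha(A,B)}^{(i,j)} \mid \mathcal{F}\} = 0$ for any $\sigma$-field $\mathcal{F}$ that is symmetric in this pair. Combined with Proposition \ref{lem-psiG} applied inside the functions $f_{A_j, B_j}$, this forces the covariance
\begin{align*}
\Cov\bigl( \xi_{\alpha_1(A_1, B_1)}^{(i_1, j_1)} \xi_{\alpha_1'(A_2, B_2)}^{(i_1, j_1)}, \ \xi_{\alpha_2(A_1, B_1)}^{(i_2, j_2)} \xi_{\alpha_2'(A_2, B_2)}^{(i_2, j_2)} \bigr)
\end{align*}
to vanish unless the union of vertex sets $\alpha_1(A_1) \cup V(\alpha_1(B_1)) \cup \alpha_1'(A_2) \cup V(\alpha_1'(B_2))$ on the first side overlaps nontrivially with its counterpart on the second side, and moreover both edges $(i_1, j_1)$ and $(i_2, j_2)$ are internal to the coupled configuration. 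Each $\xi$-factor is bounded in $L^4$ by $2\tau$ via the Hoeffding formula \cref{eq-fAB} and Jensen's inequality, so H\"older yields a per-configuration bound of $O(\tau^4)$.

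Second, I would count the surviving configurations. A naive count gives $O(n^2 \cdot n^{4(d-2)}) = O(n^{4d-6})$, which is far too large. But the orthogonality just described forces each of the four $\alpha$-tuples to be pinned to another tuple through the two switched edges. The connectedness of $G_{A_j, B_j}$ is the crucial ingredient: once any single vertex of $\alpha_s(A_j) \cup V(\alpha_s(B_j))$ is pinned, connectedness of $G_{A_j, B_j}$ propagates the pinning to all $d$ vertices of that tuple, so each coincidence forced by orthogonality collapses an entire tuple onto a previously determined one. A case analysis on whether $(i_1, j_1) = (i_2, j_2)$, shares an endpoint, or is disjoint from $(i_2, j_2)$ then shows that the number of free indices in any surviving configuration is at most $2d - 1$, which together with the $O(\tau^4)$ per-term bound gives the claim.

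The main obstacle is the combinatorial bookkeeping in the second step: verifying that the forced pinnings propagate through the connected graphs to yield exactly $n^{2d-1}$ free configurations (and not $n^{2d}$ or $n^{2d-2}$), with correct handling of each of the edge-overlap cases and of the symmetry factors $\lvert \Aut(G_{A_j, B_j}) \rvert$. The variance calculation in Lemma~4 of \citet{janson1991asymptotic}, which uses connectedness in the same spirit to bound $\Var\{ S_{n,k}(\func) \}$ in \cref{lem-3.w}, provides a useful template to follow.
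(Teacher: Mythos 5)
Your overall architecture matches the paper's proof: expand the variance into covariances indexed by $((i,j),(i',j'),\alpha_1,\alpha_2,\alpha_1',\alpha_2')$, kill most terms via Hoeffding orthogonality (\cref{lem-psiG}) and antisymmetry in $(Y_{i,j},Y_{i,j}')$, bound the survivors by $C\tau^4$, and count. But the step that actually produces the exponent $2d-1$ is not correct as you describe it. Your key mechanism --- that once a single vertex of a tuple is pinned, connectedness of $G_{A_j,B_j}$ ``propagates the pinning to all $d$ vertices of that tuple,'' so each forced coincidence collapses a whole tuple onto a previously determined one --- is false: connectedness is a property of the abstract index graph on $[d]$ and puts no constraint on which elements of $[n]$ the remaining $d-1$ coordinates of $\alpha$ take, so fixing one coordinate leaves the others free. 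Indeed, if such a full collapse were forced, far fewer than $n^{2d-1}$ terms would survive, whereas configurations with $\{\alpha_1\}=\{\alpha_2\}$, $\{\alpha_1'\}=\{\alpha_2'\}$ and the two sides sharing exactly one vertex genuinely survive and already contribute on the order of $n^{2d-1}$ terms. Conversely, the vanishing criterion you actually justify (nontrivial overlap of the two vertex unions, with the switched edges internal) is far too weak: it leaves on the order of $n^{4d-5}$ configurations and cannot give the stated bound.

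What the paper proves (\cref{lem-cov1}) is a quantitative threshold: writing $s=\lvert\{\alpha_1\}\cap\{\alpha_2\}\rvert\geq 2$ and $t=\lvert\{\alpha_1'\}\cap\{\alpha_2'\}\rvert\geq 2$ (both intersections contain the switched pair), the covariance vanishes whenever $\lvert(\{\alpha_1\}\cup\{\alpha_2\})\cap(\{\alpha_1'\}\cup\{\alpha_2'\})\rvert\leq 2d-(s+t)$; the surviving configurations for each $(s,t)$ then number at most $n^{(2d-s)+(2d-t)-(2d-s-t+1)}=n^{2d-1}$, and each is bounded by $C\tau^4$ via Cauchy--Schwarz, exactly as in \cref{l3.0-0C}. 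Connectedness enters not through any pinning propagation but in a boundary case of that covariance lemma: when the overlap equals $2d-(s+t)$ and $V_0\cap V_0'=\emptyset$, connectedness of $G_{A_1,B_1}$ guarantees an edge of $\alpha_1(B_1)$ joining the shared block $V_0$ to $V_1$, so that after conditioning on the remaining variables the orthogonality of \cref{lem-psiG} forces the conditional expectation to vanish. To complete your argument you would need to formulate and prove such a quantitative vanishing threshold (or an equivalent); as written, the counting step is a genuine gap.
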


\begin{lemma}
	\label{lem5.5}
	Assume that $k \geq d + 1$.
	For all $(A_1, B_1)$, $(A_2, B_2) \in \mathcal{G}_{f, d}\cup \mathcal{G}_{f, d + 1}$, we have 
	\begin{align*}
		\Var \Biggl\{ \sum_{(i,j) \in \mathcal{A}_{n,2}} \biggl( \sum_{\alpha_1 \in \mathcal{A}_{n,v_1}^{(i,j)}} \xi_{\alpha_1(A_1,B_1)}^{(i,j)} \biggr) \biggl\lvert \sum_{\alpha_2 \in \mathcal{A}_{n,v_2}^{(i,j)}} \xi_{\alpha_2(A_2,B_2)}^{(i,j)} \biggr\rvert \Biggr\}
		\leq C n^{2 \max \{v_1,v_2\} - 2} \tau^4. 
	\end{align*}
\end{lemma}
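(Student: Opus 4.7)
The strategy is to bound $\Var(Z) \leq \E\{Z^2\}$, where $Z$ denotes the random variable inside the variance, and to expand
$$\E\{Z^2\} = \sum_{(i_1,j_1),(i_2,j_2) \in \mathcal{A}_{n,2}} \E\bigl\{A_{i_1 j_1}|B_{i_1 j_1}|\,A_{i_2 j_2}|B_{i_2 j_2}|\bigr\},$$
with $A_{ij} = \sum_{\alpha_1 \in \mathcal{A}_{n,v_1}^{(i,j)}} \xi^{(i,j)}_{\alpha_1(A_1,B_1)}$ and $B_{ij}$ defined analogously. The key preliminary observation is that $\xi^{(i,j)}_{\alpha(A,B)}$ vanishes unless the pair $\{i,j\}$ appears as an edge of $\alpha(B)$, since replacing $Y_{i,j}$ by $Y'_{i,j}$ affects $f_{A,B}$ only through that coordinate; consequently each inner sum contains $O(n^{v_1-2})$ (respectively $O(n^{v_2-2})$) effectively nonzero terms, with the summation indices anchored to the edge $\{i,j\}$.

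To handle the absolute values I would apply Cauchy--Schwarz pointwise,
$$\E\bigl\{A_{i_1 j_1}|B_{i_1 j_1}|\,A_{i_2 j_2}|B_{i_2 j_2}|\bigr\} \leq \bigl(\E\{A_{i_1 j_1}^2 A_{i_2 j_2}^2\}\bigr)^{1/2}\bigl(\E\{B_{i_1 j_1}^2 B_{i_2 j_2}^2\}\bigr)^{1/2},$$
using $|B|^2 = B^2$ to eliminate the absolute values, and stratify the outer sum into the three cases $|\{i_1,j_1\}\cap\{i_2,j_2\}| \in \{0,1,2\}$, which contribute $O(n^4)$, $O(n^3)$, $O(n^2)$ edge-pairs respectively. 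The remaining task is to bound the two fourth-moment expressions $\E\{A_{i_1 j_1}^2 A_{i_2 j_2}^2\}$ and $\E\{B_{i_1 j_1}^2 B_{i_2 j_2}^2\}$ in each overlap regime.

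Each such expression is a quadruple sum of expectations of products of four $\xi$-terms. Expanding each $\xi$ as a difference of two $f_{A_s,B_s}$-evaluations and invoking the Hoeffding orthogonality of \cref{lem-psiG}, an expectation is zero unless the four multi-indices $(\alpha_1,\beta_1,\alpha_2,\beta_2)$ fit together so that every vertex and every edge appearing in one index set is ``covered'' by another, subject to the additional constraint that each $(\alpha_r,\beta_r)$ anchors the corresponding edge $\{i_r,j_r\}$. Counting the admissible quadruples case by case and combining with the bound $\|f_{A,B}\|_4 \leq C\tau$ (inherited from $\|f\|_4 \leq \tau$ via \cref{eq-fAB}), the product of the two square roots, summed over the three overlap cases, collapses to the target bound $C n^{2\max(v_1,v_2)-2}\tau^4$.

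The main obstacle I anticipate is precisely this combinatorial bookkeeping. In \cref{lem5.3}, the product $A_{ij} B_{ij}$ of two signed sums permits a direct Hoeffding-orthogonality cancellation between the $A$- and $B$-factors; here the absolute value precludes that route, forcing the Cauchy--Schwarz split and the need to work with fourth moments rather than second. The proof should otherwise parallel the template of \cref{lem5.3}, but with the additional care needed to track which of the four multi-indices supply the overlapping vertices that match the two anchor edges $\{i_1,j_1\}$ and $\{i_2,j_2\}$ against the Hoeffding-support graphs, and to verify that this delicate counting yields exactly the claimed exponent.
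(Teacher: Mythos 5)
Your opening reduction $\Var(Z)\leq \E\{Z^2\}$ followed by a \emph{pointwise} Cauchy--Schwarz bound on every cross term cannot reach the stated exponent, because it discards the one cancellation the lemma actually rests on: the exact vanishing of the covariance between terms anchored at two \emph{different} edges. Write $P_{i,j}=\sum_{\alpha_1\in\mathcal{A}_{n,v_1}^{(i,j)}}\xi^{(i,j)}_{\alpha_1(A_1,B_1)}$ and $Q_{i,j}=\sum_{\alpha_2\in\mathcal{A}_{n,v_2}^{(i,j)}}\xi^{(i,j)}_{\alpha_2(A_2,B_2)}$. The paper's proof first shows $\E\{P_{i,j}|Q_{i,j}|\}=0$ and $\Cov\{P_{i,j}|Q_{i,j}|,P_{i',j'}|Q_{i',j'}|\}=0$ whenever $\{i,j\}\neq\{i',j'\}$: the point is that $P_{i,j}$ is antisymmetric under swapping $Y_{i,j}\leftrightarrow Y_{i,j}'$ while $|Q_{i,j}|$ is symmetric under that swap, so the product has vanishing conditional expectation given everything outside the pair $(Y_{i,j},Y_{i,j}')$, with respect to which the factor indexed by the other edge is measurable. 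This collapses the variance to the $O(n^2)$ diagonal pairs. Your bound $\E\{P_{i_1,j_1}|Q_{i_1,j_1}|P_{i_2,j_2}|Q_{i_2,j_2}|\}\leq(\E\{P_{i_1,j_1}^2P_{i_2,j_2}^2\})^{1/2}(\E\{Q_{i_1,j_1}^2Q_{i_2,j_2}^2\})^{1/2}$ admits no such cancellation: these are expectations of products of squares, and for disjoint anchor edges they are genuinely of size $\E\{P^2\}\E\{P^2\}\asymp n^{2(v_1-2)}$ and $\asymp n^{2(v_2-2)}$ (each $\E\{P_{i,j}^2\}$ is bounded below by a positive constant times $n^{v_1-2}$ because $\sigma_{A_1,B_1}>0$ for $(A_1,B_1)\in\mathcal{G}_{f,d}\cup\mathcal{G}_{f,d+1}$). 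Hence the zero-overlap stratum alone contributes order $n^{4}\cdot n^{v_1-2}\cdot n^{v_2-2}=n^{v_1+v_2}$, which exceeds the target $n^{2\max\{v_1,v_2\}-2}$ by a factor $n^{2}$ when $v_1=v_2$, and the one-common-vertex stratum exceeds it by a factor $n$. No refinement of your quadruple-index Hoeffding counting can repair this, since after the pointwise Cauchy--Schwarz step the quantities being summed really are that large.

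The diagonal pairs are not where the difficulty lies: your full decoupling of $P$ from $|Q|$ there gives $n^{2}\cdot n^{v_1-2}\cdot n^{v_2-2}=n^{v_1+v_2-2}\leq n^{2\max\{v_1,v_2\}-2}$, an admissible exponent (granted the fourth-moment bounds you assume). Note, though, that even on the diagonal the paper does not fully separate the two factors: it expands only the signed sums, fixes $\alpha_1,\alpha_1'$, splits the double sum inside the absolute value into pairs $(\alpha_2,\alpha_2')$ missing some vertex $r\in(\{\alpha_1\}\setminus\{\alpha_1'\})\cup(\{\alpha_1'\}\setminus\{\alpha_1\})$ and the remainder $\mathcal{A}_2$, kills the former via $\E\{\xi^{(i,j)}_{\alpha_1(A_1,B_1)}\xi^{(i,j)}_{\alpha_1'(A_1,B_1)}\mid\mathcal{F}_r\}=0$ (orthogonality from \cref{lem-psiG}, with $\mathcal{F}_r=\sigma(X_p,Y_{p,q},Y_{p,q}':p,q\in[n]\setminus\{r\})$, the absolute value of that partial sum being $\mathcal{F}_r$-measurable), and only then applies Cauchy--Schwarz to the small remainder $\mathcal{A}_2$. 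So the missing idea in your proposal is the antisymmetry-in-$(Y_{i,j},Y_{i,j}')$ argument eliminating all off-diagonal edge pairs; without it, or some substitute cancellation across distinct anchor edges, the approach cannot yield the claimed $n^{2\max\{v_1,v_2\}-2}$ rate.
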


We are now ready to give the proof of \cref{thm-3.0}.
\begin{proof}
[Proof of \cref{thm-3.0}]
We assume that $n \geq \max \{k,2\}$ without loss of generality, otherwise the result is trivial.  
Recall that $f_{(d)}$ is defined in \cref{eq-psiell}.
Write $T = \sigma_{n}^{-1} (S_{n,k}(\func) - \E \{S_{n,k}(\func)\})$, and 
\begin{equ}
	W = \sigma_{n}^{-1} S_{n,k} (\func_{(d)}), \quad U = T - W = \sigma_{n}^{-1} \sum_{\ell = d + 1}^k S_{n,k}(\func_{(\ell)}). 
	\label{t3.0-04}
\end{equ}
Here, if $d + 1 > k$, then set $\sum_{\ell = d + 1}^k S_{n,k}(\func_{(\ell)}) = 0$.
With a slight abuse of notation, we write $(A,B) \in \mathcal{G}_{f,d}$ if $G_{A,B} \in \mathcal{G}_{f,d}$.
We have 
\begin{align*}
	W & =  \frac{1}{\sigma_n} \sum_{\alpha \in \mathcal{A}_{n,d}} \sum_{(A,B) \in \mathcal{G}_{f,d}}  \binom{n - d}{k - d} \frac{ \func_{A,B}(X_{\alpha(A)}; Y_{\alpha(B)})  }{ \lvert \Aut(G_{A,B}) \rvert }\\
	& = 
	\frac{1}{\sigma_n} \sum_{\alpha \in \mathcal{A}_{n,d}} \sum_{({A,B}) \in \mathcal{G}_{f,d}}  \binom{n - d}{k - d} \frac{ \func_{A,B	}(X_{\alpha(A)}; Y_{\alpha(B)})  }{ \lvert \Aut(G_{A,B}) \rvert }, 
\end{align*}
because by assumption, $\func_{A,B} \equiv 0$ for all $(A,B) \in \mathcal{G}_{f,d}$. 

For each $(i,j) \in \mathcal{A}_{n,2}$, let 
\begin{align*}
	W^{(i,j)} = \frac{1}{\sigma_{n}} S_{n,k}^{(i,j)} (\func_{(d)}), \quad 
	U^{(i,j)} = \sigma_{n}^{-1} \sum_{\ell = d + 1}^k S_{n,k}^{(i,j)} (\func_{(\ell)}). 
\end{align*}

Let $(I,J)$ be a random 2-fold index
uniformly chosen in $\mathcal{A}_{n,2}$, which is independent of all others. 
Then, $( (X, Y), (X, Y^{(I,J)}))$ is an exchangeable pair.
Let 
\begin{align*}
	\Delta & =  W - W^{(I,J)} =  \frac{1}{\sigma_{n}} \sum_{\alpha \in \mathcal{A}_{n,d}} \sum_{(A,B) \in \mathcal{G}_{f,d}}  \nu_{A,B} \xi_{\alpha(A,B	)}^{(I,J)}.
\end{align*}
Also, define 
\begin{align*}
	D = \frac{1}{\sigma_{n}} \sum_{\alpha \in \mathcal{A}_{n,d}} \sum_{(A,B) \in \mathcal{G}_{f,d}	}  \mu_{A,B} \xi_{\alpha(A,B)}^{(I,J)}.
\end{align*}
Then, we have $D$ is antisymmetric with respect to $(X, Y)$ and $(X, Y^{(I,J)})$.

Let $\mathcal{A}_{n,d}^{(i,j)} = \{ \alpha \in \mathcal{A}_{n,d} : \{i,j\} \subset \{\alpha\} \}$. Then, 
\begin{align*}
	\MoveEqLeft\E \{ D \mvert X, Y \}\\
	& = \frac{1}{n(n-1)\sigma_{n}} \sum_{(i,j) \in \mathcal{A}_{n,2}} \sum_{\alpha \in \mathcal{A}_{n,d}^{(i,j)}} \sum_{(A,B) \in \mathcal{G}_{f,d}} \mu_{A,B} \E \bigl\{ \xi_{\alpha(A,B)}^{(i,j)} \mvert X, Y\bigr\}.
\end{align*}
By \cref{eq-pp1},  
\begin{align*}
	\MoveEqLeft \E \{ \func_{A,B}(X_{\alpha(A)}; Y_{\alpha(B)}^{(i,j)}) \mvert X, Y \}\\
	& = 
	\E \{ \func_{A,B}(X_{\alpha(A)}; Y_{\alpha(B)}) \mvert X_{A}, Y_{B} \setminus\{ Y_{i, j} \} \} \\
	& =  
	\begin{cases}
		0 & \text{if $(i,j) \in B$, }\\
		\func_{A,B}(X_{\alpha(A)}; Y_{\alpha(B)}) & \text{otherwise}.
	\end{cases}
\end{align*}
Moreover, note that for $\alpha \in \mathcal{A}_{n,d}$,
\begin{align*}
	\sum_{(i,j) \in \mathcal{A}_{n,2}}  \IN{ (i,j) \in \alpha(B) } = 2 |B|, 
\end{align*}
and thus
\begin{align}
	\MoveEqLeft \E \{ D \mvert X, Y \}\nonumber  \\
	& =  \frac{1}{n(n-1) \sigma_{n}} \sum_{\alpha \in \mathcal{A}_{n,d}} \sum_{(A,B)\in \mathcal{G}_{f,d}} \mu_{A,B} \func_{A,B}(X_{\alpha(A)}; Y_{\alpha(B)}) \\
	& \quad \times \sum_{ (i, j) \in \mathcal{A}_{n,2} }  \IN { (i,j) \in  \alpha(B) } \nonumber \\
	& = \frac{2}{n(n-1)\sigma_{n}}\sum_{\alpha \in \mathcal{A}_{n,d}} \sum_{(A,B)\in \mathcal{G}_{f,d}} \nu_{A,B} \func_{A,B}(X_{\alpha(A)}; Y_{\alpha(B)})\nonumber \\
	& = \frac{2}{n(n-1)} W. 
    \label{t3.0-03}
\end{align}
 
Thus, \cref{eq-corcon} is satisfied with $\lambda = 2/(n (n - 1))$ and $R = 0$.
Moreover, by exchangeability, 
\begin{align} \label{t3.0-02}
	\E \{ D \Delta \} = 2 \E \{ D W \} = 2 \lambda \E \{W^2\} = 2 \lambda \sigma_{n,d}^2/\sigma_n^2 .
\end{align}
Then, we have   
\begin{align*}
	\MoveEqLeft\frac{1}{2\lambda} \E \{ D \Delta \mvert X, Y, Y' \}\\
	& = \frac{1}{4\sigma_{n}^{2}}\sum_{(A_1,B_1)\in \mathcal{G}_{f,d}} \sum_{(A_2,B_2)\in \mathcal{G}_{f,d}} \mu_{A_1,B_1} \nu_{A_2,B_2} \\
	& \quad \times \sum_{(i,j) \in \mathcal{A}_{n,2}}  
	\biggl(\sum_{\alpha \in \mathcal{A}_{n,d}^{(i,j)}}  \xi_{\alpha(A_1,B_1)}^{(i,j)}  \biggr) \biggl(\sum_{\alpha \in \mathcal{A}_{n,d}^{(i,j)}}
	\xi_{\alpha(A_2,B_2)}^{(i,j)}  \biggr) .
\end{align*}
Now, by the Cauchy inequality, \cref{t3.0-02} and \cref{lem-3.w,lem5.3}, we have 
\begin{align*}
	\MoveEqLeft \E \biggl\lvert \frac{1}{2\lambda} \E \{ D \Delta \mvert X, Y, Y' \} - 1 \biggr\rvert\\
	& \leq \E \biggl\lvert \frac{1}{2\lambda} \E \{ D \Delta \mvert X, Y, Y' \} - \frac{1}{2\lambda}\E \{D \Delta\}\biggr\rvert + \frac{ \sigma_n^2 - \sigma_{n,d}^2 }{\sigma_{n}^2}\\
	& \leq \frac{1}{4\sigma_{n}^2} \sum_{(A_1,B_1)\in \mathcal{G}_{f,d}} \sum_{(A_2,B_2)\in \mathcal{G}_{f,d}} \mu_{A_1,B_1} \nu_{A_2,B_2} \\
	& \quad \times \Biggl( \Var \Biggl\{ \sum_{(i,j) \in \mathcal{A}_{n,2}} \biggl( \sum_{\alpha \in \mathcal{A}_{n,d}^{(i,j)}} \xi_{\alpha(A_1,B_1)}
				^{(i,j)} \biggr) \biggl( \sum_{\alpha \in
	\mathcal{A}_{n}^{(i,j)}} \xi_{\alpha(A_2,B_2)}^{(i,j)} \biggr) \Biggr\} \Biggr)^{1/2}\\
	& \quad + \frac{ \sigma_n^2 - \sigma_{n,d}^2 }{\sigma_{n}^2}\\
	& \leq C n^{-1/2}. 
\end{align*}
Taking $D^* = |D|$, by \cref{lem5.5},
\begin{align*}
	\MoveEqLeft \frac{1}{\lambda} \E \bigl\lvert \E \{D^*\Delta  \mvert X , Y, Y'\} \bigr\rvert \\
	& = \frac{1}{\lambda} \E \bigl\lvert \E \{D^*\Delta \mvert X , Y, Y'\} \bigr\rvert  \\
	& \leq  \frac{1}{4\sigma_{n}^2} \sum_{(A_1,B_1)\in \mathcal{G}_{f,d}} \sum_{(A_2,B_2)\in \mathcal{G}_{f,d}} \mu_{A_1,B_1} \nu_{A_2,B_2} \\
	& \quad \times \Biggl( \Var \Biggl\{ \sum_{(i,j) \in \mathcal{A}_{n,2}} \biggl\lvert \sum_{\alpha \in \mathcal{A}_{n,d}^{(i,j)}} \xi_{\alpha(A_1,B_1)}
				^{(i,j)} \biggr\rvert \biggl( \sum_{\alpha \in
	\mathcal{A}_{n}^{(i,j)}} \xi_{\alpha(A_2,B_2)}^{(i,j)} \biggr) \Biggr\} \Biggr)^{1/2}\\
	& \leq C n^{-1}. 
\end{align*}

Now, by \cref{t3.0-04} and \cref{lem-3.w}, we have 
\begin{align*}
	\E|U|^2 & \leq C \sigma_{n,d}^{-2} (k - d) \sum_{\ell = d + 1}^{k} \E ( S_{n,k}^2(\func_{(\ell)})) \leq C n^{-1}, \\ 
	\E (U - U^{(i,j)})^2 & \leq C \sigma_{n,d}^{-2} (k - d) \sum_{\ell = d + 1}^k \E \{ ( S_{n,k}(\func_{(\ell)}) - S_{n,k}^{(i,j)}(\func_{(\ell)}))^2\}
	\leq C n^{-3}, \\
	\E (W - W^{(i,j)})^2 & \leq C \sigma_{n,d}^{-2}  \E \{ ( S_{n,k}(\func_{(d)}) - S_{n,k}^{(i,j)}(\func_{(d)}))^2\}\leq C n^{-2}. 
\end{align*}
Thus, 
\begin{align*}
	\E |U| & \leq C n^{-1/2}, \\
	\frac{1}{\lambda}\E \bigl\lvert \Delta (U - U^{(I,J)})\bigr\rvert
	& = \sum_{i \in \mathcal{I}_{n,2}} \E \{ \lvert (W - W^{(i,j)}) (U - U^{(i,j)})\rvert\}
	\\
	& \leq C n^{-1/2}. 
\end{align*}
Applying \cref{cor-nonlinear}, we obtain the desired result. 
\end{proof}

\subsection{Proof of \cref{thm-3.2}}%
\label{sec:proof_of_thm-3.2}
The proof of \cref{thm-3.2} is similar to that of \cref{thm-3.0}. Without loss of generality, we assume that $k \geq d + 1$, otherwise the proof is even simpler.

For any $A \subset [k]$ and $B \subset [k]_2$, recall that
\begin{align*}
	\mu_{A,B} & \coloneqq \frac{1}{ \lvert \Aut(G_{A,B}) \rvert |B| }\binom{n - v_{A,B}}{n - k} , \\
	\nu_{A,B} & \coloneqq |B| \mu_{A,B} =  \frac{1}{ \lvert \Aut(G_{A,B}) \rvert }\binom{n - v_{A,B}}{n - k} .  
\end{align*}
By \cref{lem-psiG}, we have there exists a Hoeffding decomposition of $g$ as follows:  
\begin{align*}
	g (y)= \sum_{B \subset [k]_2} g_{B}(y_B),  
\end{align*}
where $y = (y_{1,2}, \dots, y_{k - 1, k})$ and $y_{B} = (y_{i,j} : (i,j) \in B).$
Also, for any $B\subset [k]_2$ and $\alpha \in \mathcal{A}_{n,\ell}$ ($\ell = 1, \dots, k$), let 
\begin{align*}
	\eta_{\alpha(B)}^{(i,j)}	= g_{B}(  Y_{\alpha(B)} ) - g_{B}(Y_{\alpha(B)}^{(i,j)}).
\end{align*}
For any $B\in [k]_2$, let $V_B$ be the node set of the graph with edge set $B$. For any $r \in V_B$, let $B^{(r)} = \{ (i, j) : (i,j) \in B, i \neq r, j \neq r \}$. 
Recall that $\mathcal{G}_{f,d+1} = \{ (A,B) : A \subset [k], B\subset [k]_2	, v_{A,B} = d + 1, \sigma_{A,B} > 0 \}$ and $\tilde{\mathcal{G}}_{f,d} = \{ (A, B) \in \mathcal{G}_{f,d} : G_{A,B} \text{ is strongly connected.} \}$.

We need to apply the following lemma in the proof of \cref{thm-3.2}. 
\begin{lemma}
	\label{lem5.4}
	Assume that $k \geq d + 1$. 
	For all $(A_j, B_j) \in \tilde{\mathcal{G}}_{f,d} \cup \mathcal{G}_{f,d + 1}$ and let $v_j = v_{A_j, B_j}$ for $j = 1, 2$, we have 
	\begin{align*}
		\Var \biggl\{ \sum_{(i,j) \in \mathcal{A}_{n,2}} \biggl(\sum_{\alpha_1 \in \mathcal{A}_{n,v_1}^{(i,j)}} \eta_{\alpha_1(B_1)}^{(i,j)}\biggr) \biggl(\sum_{\alpha_2 \in \mathcal{A}_{n,v_2}^{(i,j)}} \eta_{\alpha_2(B_2)}^{(i,j)}\biggr)\biggr\} \leq C
		n^{2d - 2}
	\tau^{4}. 
	\end{align*}
\end{lemma}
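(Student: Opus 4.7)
The plan is to expand the variance as a quadruple sum over index tuples, then use the Hoeffding orthogonality of $g$ together with the strong-connectivity hypothesis to reduce the number of non-vanishing contributions to order $n^{2d-2}$. Set
\begin{align*}
S & \coloneqq \sum_{(i,j) \in \mathcal{A}_{n,2}} U_{(i,j)} V_{(i,j)},\\
U_{(i,j)} & \coloneqq \sum_{\alpha_1 \in \mathcal{A}_{n,v_1}^{(i,j)}} \eta_{\alpha_1(B_1)}^{(i,j)},\qquad V_{(i,j)} \coloneqq \sum_{\alpha_2 \in \mathcal{A}_{n,v_2}^{(i,j)}} \eta_{\alpha_2(B_2)}^{(i,j)},
\end{align*}
so that $\Var(S)$ decomposes as a sum over pairs $((i,j),(i',j')) \in \mathcal{A}_{n,2}^2$ and quadruples $(\alpha_1,\alpha_2,\alpha_1',\alpha_2')$ of index tuples. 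Two elementary observations drive everything that follows: $\eta_{\alpha(B)}^{(i,j)}$ vanishes unless $(i,j) \in \alpha(B)$, and it depends only on $Y_{\alpha(B)}$ and $Y'_{i,j}$; and by \cref{lem-psiG} applied to the edge-only Hoeffding decomposition of $g$, $\E\{g_B(Y_{\alpha(B)}) \mvert Y_C\} = 0$ whenever the edge set $C$ misses at least one edge of $\alpha(B)$.

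The next step is to use these orthogonality properties to identify which quadruples survive. Marginalising successively over the edge variables that appear in only one of the four image graphs $\alpha_1(B_1),\alpha_2(B_2),\alpha_1'(B_1),\alpha_2'(B_2)$, every edge of every image graph---except possibly the anchor edges $(i,j)$ and $(i',j')$---must appear in at least two of the four image graphs for the expectation $\E\{\eta_{\alpha_1(B_1)}^{(i,j)} \eta_{\alpha_2(B_2)}^{(i,j)} \eta_{\alpha_1'(B_1)}^{(i',j')} \eta_{\alpha_2'(B_2)}^{(i',j')}\}$ to be non-zero. This pairing constraint forces a large amount of edge sharing and hence vertex sharing between the four image graphs, and I would organise the rest of the proof as a vertex-counting problem.

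The strong connectivity of $G_{A_p,B_p} \in \tilde{\mathcal{G}}_{f,d}$ is the decisive ingredient. Because $G_{A_p,B_p}^{(r)}$ is connected for every vertex $r$, removing the anchor vertices associated with $(i,j)$ or $(i',j')$ still leaves each image graph connected, so any single ``free'' vertex (one used by exactly one of the four image graphs and otherwise unconstrained) propagates a matching condition across the whole of that graph and, via the pairing condition, across all four. A careful enumeration of the overlaps---how the anchor vertices of $(i,j)$ and $(i',j')$ coincide or differ, and how the remaining vertices are identified between the four copies---shows that at most $2d - 2$ vertex positions remain free; each surviving configuration contributes $O(\tau^4)$ by Cauchy--Schwarz and the uniform $L^4$ bound $\lVert g_B \rVert_4 \leq C\tau$, yielding the announced $O(n^{2d-2}\tau^4)$ bound.

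The main obstacle I expect is the case analysis when the two types of graphs are mixed, e.g.\ $(A_1,B_1) \in \tilde{\mathcal{G}}_{f,d}$ but $(A_2,B_2) \in \mathcal{G}_{f,d+1}$. Graphs in $\mathcal{G}_{f,d+1}$ carry an extra vertex and are not assumed strongly connected, so the vertex-counting must additionally invoke the Hoeffding vanishing of $g_{B_2}$ to force that extra vertex to be identified with a vertex of one of the other three graphs. Combining this identification with the strong-connectivity reduction on the $\tilde{\mathcal{G}}_{f,d}$ side---while also handling the subcases where the anchor pairs $(i,j)$ and $(i',j')$ overlap partially or entirely---is where the bulk of the bookkeeping lies.
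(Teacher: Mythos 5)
Your skeleton coincides with the paper's own route: expand the variance into covariances of quadruples exactly as in the proof of \cref{lem5.3}, kill terms via the Hoeffding orthogonality of \cref{lem-psiG}, let strong connectivity force one extra vertex identification, bound each surviving covariance by $C\tau^4$, and count. The gap is that the decisive step is only announced, never carried out. What you call ``a careful enumeration of the overlaps'' is precisely the content of the paper's \cref{lem-cova3}: the covariance of $\eta_{\alpha_1(B_1)}^{(i,j)}\eta_{\alpha_2(B_2)}^{(i,j)}$ and $\eta_{\alpha_1'(B_1)}^{(i',j')}\eta_{\alpha_2'(B_2)}^{(i',j')}$ vanishes unless the two unions of index sets share at least $v_1+v_2+\gamma_1+\gamma_2-(s+t)$ vertices, and the only point where strong connectivity enters is the borderline case in which $\{\alpha_1\}\cap\{\alpha_2\}$ and $\{\alpha_1'\}\cap\{\alpha_2'\}$ meet in a single vertex $r$: there one conditions on a suitable edge $\sigma$-field to make the two products conditionally independent, and connectedness of $G_{A_1,B_1}^{(r)}$ guarantees an edge of $\alpha_1(B_1)$ joining $(\{\alpha_1\}\cap\{\alpha_2\})\setminus\{r\}$ to the remaining vertices whose $Y$-variable is missing from the conditioning, so the conditional expectation vanishes by orthogonality. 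Your heuristic that ``any single free vertex propagates a matching condition across all four copies'' is a different, weaker mechanism and does not substitute for this conditioning argument; without the quantitative threshold you have nothing to count against.

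More seriously, the target you set for the enumeration is unattainable in exactly the cases you flag as the main obstacle. For graphs on $d+1$ vertices the quantity in \cref{lem5.4} is genuinely of order $n^{2d}$, not $n^{2d-2}$: take $d=2$, triangle counting in $\ER(p)$, and $B_1=B_2$ the triangle, so that the inner sums are a constant multiple of $(\xi_{ij}-\xi_{ij}')\sum_{m\neq i,j}(\xi_{im}-p)(\xi_{jm}-p)$, with $\xi_{ij}'$ the indicator built from the resampled $Y_{ij}'$; projecting the resulting statistic onto the orthogonal chaos components indexed by $4$-cycles already gives variance of order $n^4$, and the (non strongly connected) $2$-star is worse still. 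So no amount of bookkeeping can ``force the extra vertex of a $\mathcal{G}_{f,d+1}$ graph to be identified'': the mechanism of \cref{lem-cova3} combined with the \cref{lem5.3}-type count delivers $n^{v_1+v_2-\gamma_1-\gamma_2}$, which equals $n^{2d-2}$ only when both graphs lie in $\tilde{\mathcal{G}}_{f,d}$ --- the pairs that actually appear in $D\Delta$ in the proof of \cref{thm-3.2} --- while the $(d+1)$-vertex graphs are harmless in the application only because they carry the much smaller weights $\mu_{A,B}\nu_{A,B}$ (equivalently, the exponent should be read as $2\max\{v_1,v_2\}-2$ for strongly connected graphs). Hence, besides leaving the key covariance-vanishing lemma unproved, your concluding counting claim is false in the mixed and $(d+1)$-vertex cases as you state it.
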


\begin{proof}
[Proof of \cref{thm-3.2}] 
Again, write $T = \sigma_{n}^{-1} (S_{n,k}(\fung) - \E \{S_{n,k}(\fung)\})$, and let 
\begin{equ}
	W = \sigma_{n}^{-1} (S_{n,k} (\fung_{(d)}) + S_{n,k}(\fung_{(d + 1)}) ), \quad U = \sigma_{n}^{-1} \sum_{\ell = d + 2}^k S_{n,k}(\fung_{(\ell)}). 
	\label{t3.2-04}
\end{equ}
Here, if $d + 1 > k$, then set $\sum_{\ell = d + 1}^k S_{n,k}(\fung_{(\ell)}) = 0$. Then, $T = W + U$. Now we apply \cref{cor-nonlinear} again to prove the desired result. To this end, we need to construct an exchangeable pair. 
For each $(i,j) \in \mathcal{A}_{n,2}$, let 
\begin{align*}
	W^{(i,j)} = \frac{1}{\sigma_{n}} (S_{n,k}^{(i,j)} (\fung_{(d)}) + S_{n,k}^{(i,j)} (\fung_{(d + 1)}) ), \quad 
	U^{(i,j)} = \sigma_{n}^{-1} \sum_{\ell = d + 2}^k S_{n,k}^{(i,j)} (\fung_{(\ell)}). 
\end{align*}

By assumption, we have 
\begin{align*}
	W & =  \frac{1}{\sigma_n} \sum_{(A,B) \in \tilde{\mathcal{G}}_{f,d} \cup \mathcal{G}_{f,d+1}} \sum_{\alpha \in \mathcal{A}_{n,v(G)}}  \nu_{A,B} { \fung_{A,B}(X_{\alpha(A)}; Y_{\alpha(B)})  }.
\end{align*}

Let $(I,J)$ be a random 2-fold index
uniformly chosen in $\mathcal{A}_{n,2}$, which is independent of all others. 
Then, $( (X, Y), (X, Y^{(I,J)}))$ is an exchangeable pair.
Let 
\begin{align*}
	\Delta & =  W - W^{(I,J)}  =  
		   \frac{1}{\sigma_{n}}  \biggl(\sum_{(A,B) \in \tilde{\mathcal{G}}_{f,d}} \sum_{\alpha \in \mathcal{A}_{n,d}} \nu_{A,B} { \eta_{\alpha(B)}^{(I,J)} } \biggr). 
\end{align*}

Also, define 
\begin{align*}
	D & =  \frac{1}{\sigma_{n}}  \biggl(\sum_{(A,B) \in \tilde{\mathcal{G}}_{f,d}} \sum_{\alpha \in \mathcal{A}_{n,d}} \mu_{A,B} { \eta_{\alpha(B)}^{(I,J)} } \biggr).
\end{align*}
Then, $D$ is antisymmetric with respect to $(X, Y)$ and $(X, Y^{(I,J)})$.

Following a similar argument leading to \cref{t3.0-03}, 
\begin{align}
	\E \{ D \mvert X, Y \} = \frac{2}{n(n-1)} W. 
    \label{t3.2-03}
\end{align}
Thus, \cref{eq-corcon} is satisfied with $\lambda = 2/(n (n - 1))$ and $R = 0$.
Moreover, by exchangeability, 
\begin{align} \label{t3.2-02}
	\E \{ D \Delta \} = 2 \E \{ D W \} = 2 \lambda \E \{W^2\} = 2 \lambda (\sigma_{n,d}^2 + \sigma_{n,d+1}^2)/\sigma_n^2 .
\end{align}
Now, by the Cauchy inequality, \cref{t3.2-02} and \cref{lem-3.w,lem5.4}, we have 
\begin{align*}
	\MoveEqLeft \E \biggl\lvert \frac{1}{2\lambda} \E \{ D \Delta \mvert X, Y, Y' \} - 1 \biggr\rvert\\
	& \leq \E \biggl\lvert \frac{1}{2\lambda} \E \{ D \Delta \mvert X, Y, Y' \} - \frac{1}{2\lambda}\E \{D \Delta\}\biggr\rvert + \frac{ \sigma_n^2 - \sigma_{n,d}^2 - \sigma_{n,d + 1}^2 }{\sigma_{n}^2}\\
	& \leq C n^{-1}. 
\end{align*}
With $D^* = |D|$, and by \cref{lem5.5} again, 
\begin{align*}
	\frac{1}{\lambda} \E \bigl\lvert \E \{D^*\Delta  \mvert X , Y, Y'\} \bigr\rvert \leq C n^{-1}. 
\end{align*}

Now, by \cref{t3.2-04} and \cref{lem-3.w}, we have 
\begin{align*}
	\E|U|^2 & \leq C \sigma_{n}^{-2} (k - d) \sum_{\ell = d + 2}^{k} \E ( S_{n,k}^2(\fung_{(\ell)})) \leq C n^{-2}, \\ 
	\E (U - U^{(i,j)})^2 & \leq C \sigma_{n}^{-2} (k - d) \sum_{\ell = d + 2}^k \E \{ ( S_{n,k}(\fung_{(\ell)}) - S_{n,k}^{(i,j)}(\fung_{(\ell)}))^2\}
	\leq C n^{-4}, \\
	\E (W - W^{(i,j)})^2 & \leq C \sigma_{n}^{-2}  \Bigl\{ \| S_{n,k}(\fung_{(d)}) - S_{n,k}^{(i,j)}(\fung_{(d)}) \|_2^2 \\
						 & \hspace{2.5cm}+ \| S_{n,k}(\fung_{(d + 1)}) - S_{n,k}^{(i,j)}(\fung_{(d + 1)})\|_2^2\Bigr\}\leq C n^{-2}. 
\end{align*}
Thus, 
\begin{align*}
	\E |U| & \leq C n^{-1}, \\
	\frac{1}{\lambda}\E \bigl\lvert \Delta (U - U^{(I,J)})\bigr\rvert
		   & \leq C \sum_{(i,j) \in \mathcal{A}_{n,2}} \E \{ \lvert (W - W^{(i,j)}) (U - U^{(i,j)})\rvert\}
	\\
	& \leq C n^{-1}. 
\end{align*}
Applying \cref{cor-nonlinear}, we obtain the desired result. 
\end{proof}

\section{Proof of other results}%
\label{sec:6}
\subsection{Proof of Theorem 3.2}%
\label{sub:6.1}

As $f_F^{\inj}$ does not dependent on $X$ if $\kappa \equiv p$ for some $0 < p < 1$. 
Fix $F$. Define 
\begin{align*}
	g^{\inj} (Y) = f_F^{\inj}(X; Y)
\end{align*}
and by \cref{lem-psiG}, we have $g^{\inj}$ has the following decomposition: 
\begin{equ}
	g^{\inj}(Y) = \sum_{B \subset [k]_2} g_B^{\inj}(Y_B).
    \label{eq-yinj}
\end{equ}
By \cite[p. 361]{janson1991asymptotic}, we have 
\begin{align*}
	g^{\inj}_{ \{(1,2)\} }(y_{1,2}) = \frac{2 e(F) (v(F) - 2)!}{ \lvert \Aut(G) \rvert } p^{e(F) - 1}(y_{1,2} - p) \neq 0. 
\end{align*}
Therefore, by \cref{thm-3.2} with $d = 2$, we complete the proof.

\subsection{Proof of Theorem 3.3}%
\label{sub:6.2}

Again, let 
\begin{align*}
	g^{\ind}(Y) = f^{\ind}_F(X; Y), 
\end{align*}
and similar to \cref{eq-yinj}, we have 
\begin{align*}
	g^{\ind}(Y) = \sum_{B \subset [k]_2} g_B^{\ind}(Y_B). 
\end{align*}

Recall that $e(F)$ is the number of 2-stars in $F$ and $t(F)$ is the number of triangles in $F$. Let 
\begin{align*}
	\bar e(F) = \binom{v(F)}{2}^{-1} e(F), \quad \bar s(F) = \binom{v(F)}{3}^{-1} \frac{s(F)}{3}, \quad \bar t(F) = \binom{v(F)}{3}^{-1} t(F).
\end{align*}
Let
\begin{align*}
	N(F) = \frac{v(F)!}{ \lvert \Aut(F) \rvert	 } p^{e(F)} (1 - p)^{ \binom{v(F)}{2} - e(F) }.
\end{align*}
By \citet{janson1991asymptotic}, letting $B_1 = \{ (1,2) \}$, $B_2 = \{ (1,2), (1,3) \}$ and $B_3 = \{ (1,2), (1,3),(2,3) \}$, we have 
\begin{align*}
	g^{\ind}_{ B_1 } (y) & =   \frac{N(F)}{p(1 - p)} ( \bar e(F) - p ) (y - p), \\
	g^{\ind}_{ B_2 }(y_{12}, y_{13}) & =  \frac{N(F)}{p^2(1 - p)^2} (\bar s(F) - 2 p \bar e(F) + p^2) \\
												 & \quad  \times ( (y_{12} - p) (y_{13} - p) , \\
	g^{\ind}_{ B_3 }(y_{12}, y_{13}, y_{23}) & = \frac{N(F)}{p^3(1 - p)^3} (\bar t(F) - 3 p \bar s(F) + 3 p^2 \bar e(F) - p^3) \\
															   & \quad \times ( y_{12} - p ) (y_{13} - p) (y_{23} - p).
\end{align*}

We now consider the following three cases. 

\medskip 
{\bf\noindent Case 1.} If $e(F) \neq p \binom{v(F)}{2}$. 
In this case, we have 
$g^{\ind}_{B_1} \not\equiv 0$. Then, by \cref{thm-3.2}, we have \cref{eq-3.3-1} holds. 

\medskip 
{\bf\noindent Case 2.} If $\bar e(F) = p$ and $\bar s(F) \neq p^2$. 
In this case, we have 
\begin{align*}
	g^{\ind}_{B_1} \equiv 0, \quad g^{\ind}_{B_2} \not\equiv 0.
\end{align*}
However, the graph generated by $B_2$ is a 2-star, which is not strongly connected. Then, by \cref{thm-3.0}, we have \cref{eq-3.3-2} holds. 

\medskip 
{\bf\noindent Case 3.} If $\bar e(F) = p$, $\bar s(F) = p^2$ and $\bar t(F) \neq p^3$. 
In this case, we have 
\begin{align*}
	g^{\ind}_{B_1} \equiv 0, \quad g^{\ind}_{B_2} \equiv 0, \quad g^{\ind}_{B_3} \not\equiv 0	. 
\end{align*}
Because the graph generated by $B_3$ is a triangle, which is strongly connected. Then, by \cref{thm-3.2}, we have \cref{eq-3.3-1} holds.

\appendix 
\section{Proofs of some lemmas}%
\label{sec:proofs_of_some_lemmas}

\subsection{Proof of Lemma 5.1}%
\label{sub:proof_of_lemma_5_1}
\begin{proof}
[Proof of \cref{lem3.3}]
	We write $\{\alpha\} = \{ \alpha(1), \dots, \alpha(k) \} $ for any $\alpha = (\alpha(1), \dots, \alpha(k)) \in \mathcal{A}_{n,k}$.
 Also, write $r_{\alpha} = r( X_{\alpha(1)}, \dots, X_{\alpha(k)}; Y_{\alpha(1), \alpha(2)}, \dots, Y_{\alpha(k - 1), \alpha(k)} )$. 
 Now, observe that 
	\begin{equ}
	 \Var \biggl\{ \sum_{ \alpha \in \mathcal{I}_{n,k} } r_{\alpha} \biggr\}  = \frac{1}{\sigma_n^2}  \sum_{\alpha \in \mathcal{I}_{n,k}} \sum_{ \alpha' \in \mathcal{I}_{n,k} } \Cov \bigl( r_{\alpha}, r_{\alpha'}  \bigr) . 
        \label{l3.3-02}
	\end{equ}

	Note that if $\{\alpha\} \cap \{\alpha'\} = \emptyset$, then $r_{\alpha}$ and $r_{\alpha'}$ are independent, then clearly it follows that 
	\begin{equation}
		\Cov \bigl( r_{\alpha},
		r_{\alpha'} \bigr) = 0 
        \label{l3.3-03}
	\end{equation}
 if $\{\alpha\} \cap \{\alpha'\} = \emptyset$. 	
	If there exists $i \in \{1, \dots, n\}$ such that $\{\alpha\} \cap \{\alpha'\} = \{i\}$, then 
	\begin{equ}
		\Cov \bigl( r_{\alpha}, r_{\alpha'}) \bigr)
		 & = \E \Bigl\{ \Cov \bigl( r_{\alpha}, r_{\alpha'} \bigm\vert X_i \bigr) \Bigr\} + \Cov \bigl( \E \{ r_{\alpha} \vert X_i \} , \E \{ r_{\alpha'} \vert X_i\} \bigr).
        \label{l3.3-01}
	\end{equ}
	By independence, we have the first term of \cref{l3.3-01} is 0. For the second term, note that for any $i \in \{\alpha\}$, then $\E \{ r_{\alpha} \vert X_i \} = 0$, and thus the second term of \cref{l3.3-01} is also 0. Therefore, 
	\begin{equ}
		\Cov \bigl( r_{\alpha}, r_{\alpha'} \bigr) = 0, \text{ if $ \lvert \{\alpha\} \cap \{\alpha'\} \rvert = 1 $. }
        \label{l3.3-04}
	\end{equ}
	For any $\alpha$ and $\alpha'$ such that $\lvert \{\alpha\} \cap \{\alpha'\}\rvert \geq 2$, by the Cauchy inequality, we have 
	\begin{align*}
		\Cov \bigl( r_{\alpha}, r_{\alpha'} \bigr) \leq \Var \bigl(  r_{\alpha} \bigr) . 
	\end{align*}
	Recall that  $r_{\alpha}$ and $g(X_j)$  are orthogonal for every $j \in \{\alpha\}$.  
	By \cref{t3.1-01}, we have 
	\begin{align*}
		\Var \bigl(  r_{\alpha} \bigr) & =  \Var \bigl(  \func (X_{\alpha(1)}, \dots, X_{\alpha(k)}; Y_{\alpha(1),\alpha(2)}, \dots, Y_{\alpha(k-1),\alpha(k)} ) \bigr) \\
									   & \quad - \sum_{j \in \{\alpha\}} \E \{ f_1(X_j)^2 \} \\
									   & \leq \tau^{2}.
	\end{align*}
	Thus, it follows that 
	\begin{equ}
		\bigl\lvert \Cov \bigl(  r_{\alpha}, r_{\alpha'} \bigr)  \bigr\rvert& \leq \tau^{2}, \text{ if $ \lvert \{\alpha\} \cap \{\alpha'\}\rvert \geq 2 $. }
        \label{l3.3-05}
	\end{equ}
	Combining \cref{t31-sig,l3.3-02,l3.3-03,l3.3-04,l3.3-05}, we have 
	\begin{equ}
		\E \{U^2\} & \leq \frac{\tau^2}{\sigma_n^2} \sum_{\alpha \in \mathcal{I}_{n, k}} \sum_{\alpha' \in \mathcal{I}_{n,k}} \IN{ \lvert \{\alpha\} \cap \{\alpha'\} \rvert \geq 2 } \\
				   & \leq \frac{n\tau^2}{k^2\sigma_1^2}  \binom{n}{k}^{-1} \binom{k}{2} \binom{n-k}{k-2}\\ 
				   & \leq \frac{(k-1)^2 \tau^2}{2(n-1)\sigma_1^2}. 
        \label{l3.3-06}
	\end{equ}
	This proves \cref{l3.3-aa}. 

	Now we prove \cref{l3.3-bb}. Let $\mathcal{I}_{n,k}^{(i)} = \{ \alpha = \{ \alpha(1), \dots, \alpha(k) \} : \alpha(1) < \dots < \alpha(k), i \in \{\alpha\} \}$.  Note that 
	\begin{align*}
		U - U^{(i)}	 
		& =  \frac{1}{\sigma_n}\sum_{\alpha \in \mathcal{I}_{n,k}^{(i)} } r_{\{\alpha\}}^{(i)}. 
	\end{align*}
	where 
	\begin{align*}
		r_{\alpha}^{(i)} = r_{\alpha} -  r ( X_{\alpha(1)}^{(i)}, \dots, X_{\alpha(k)}^{(i)}; Y_{\alpha(1),\alpha(2)}, \dots, Y_{\alpha(k-1),\alpha(k)} ) .
	\end{align*}
	For each $\alpha$, by independence, we have 
	\begin{align*}
		\Var \bigl( r_{\alpha}^{(i)} \bigr)
		& = 2 \E \{  \Var \bigl( r_{\alpha} \bigm\vert X_j, j \in \{\alpha\} \setminus \{i\}, Y_{\alpha(1),\alpha(2)}, \dots, Y_{\alpha(k-1),\alpha(k)} \bigr) \} \\
		& \leq 2 \Var \bigl( r_{\alpha} \bigr) \leq 2 \tau^{2}.
	\end{align*}
	Similar to \cref{l3.3-06}, we have 
	\begin{align*}
		\E \{ ( U - U^{(i)} )^2 \} 
		& =  \frac{1}{\sigma_n^2}\sum_{\alpha \in \mathcal{I}_{n,k}^{(i)}} \sum_{\alpha' \in \mathcal{I}_{n,k}^{(i)}} \Cov \bigl( r_{\alpha}^{(i)}, r_{ \alpha' }^{(i)} \bigr) \\
		& \leq \frac{2 n \tau^2 }{k^2 \sigma_1^2} \binom{n}{k}^{-2} \sum_{\alpha \in \mathcal{I}_{n,k}^{(i)}} \sum_{\alpha' \in \mathcal{I}_{n,k}^{(i)}} \IN{ \lvert \{\alpha\} \cap \{\alpha'\} \rvert \geq 2 }  \\
		& \leq \frac{2n (k-1) \tau^{2}}{k^2 \sigma_1^2} \binom{n}{k}^{-2} \binom{n-1}{k-1}\binom{n-k}{k-2} \\
		& \leq \frac{2 (k - 1)^2 \tau^{2}}{n(n - 1) \sigma_1^2}.
	\end{align*}
	This completes the proof. 
\end{proof}

\subsection{Proof of Lemma 5.3}%
\label{sub:proof_of_lemma_4_3}

Recall that $\{\alpha\} = \{ \alpha(1), \dots, \alpha(\ell) \}$ for $\alpha \in \mathcal{A}_{n,\ell}$.
To prove \cref{lem5.3}, we need the following lemma. 
\begin{lemma}
	\label{lem-cov1}
	Let $(A_1, B_1), (A_2,B_2)\in \mathcal{G}_{f,d}$, $(i,j) , (i',j') \in \mathcal{A}_{n,2}$, $\alpha_1 , \alpha_2 \in \mathcal{A}_{n,d}^{(i,j)}$ and $\alpha_1' , \alpha_2' \in \mathcal{A}_{n,d}^{(i',j')}$. Let 
	\begin{align*}
		s = | \{\alpha_1\} \cap \{ \alpha_2 \} |, \quad t =  | \{\alpha_1'\} \cap \{ \alpha_2' \} | .
	\end{align*}
	If $|(\{ \alpha_1 \} \cup \{\alpha_2\}) \cap ( \{\alpha_1'\} \cap \{\alpha_2'\} )| \leq 2d - (s + t)$, then
	\begin{equ}
		\Cov \Bigl\{ \xi_{\alpha_1(A_1, B_1)}^{(i,j)} \xi_{\alpha_2(A_2, B_2)}^{(i,j)} , \xi_{\alpha_1'(A_1, B_1)}^{(i',j')} \xi_{\alpha_2'(A_2, B_2)}^{(i',j')}\Bigr\} = 0. 
        \label{lc1-a}
	\end{equ}
\end{lemma}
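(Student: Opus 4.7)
The plan is to exploit the Hoeffding orthogonality from \cref{lem-psiG}, which guarantees that $\E \{ f_{A, B}(X_{A}; Y_{B}) \mvert X_{\tilde{A}}, Y_{\tilde{B}} \} = 0$ whenever $(\tilde{A}, \tilde{B}) \subsetneq (A, B)$. The goal is to produce a ``free'' vertex $r \in V_\alpha \coloneqq \{\alpha_1\} \cup \{\alpha_2\}$ such that, after conditioning on every random variable except those incident to $r$, one of the factors of the unprimed product averages to zero while the primed product remains measurable with respect to the conditioning $\sigma$-field.

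First I would unpack the dependence of the random variables involved: $\xi_{\alpha(A,B)}^{(i,j)} = f_{A,B}(X_{\alpha(A)}; Y_{\alpha(B)}) - f_{A,B}(X_{\alpha(A)}; Y_{\alpha(B)}^{(i,j)})$ is measurable with respect to $(X_{\alpha(A)}, Y_{\alpha(B)}, Y_{i,j}')$. Consequently the unprimed product $Z \coloneqq \xi_{\alpha_1(A_1,B_1)}^{(i,j)} \xi_{\alpha_2(A_2,B_2)}^{(i,j)}$ is a function of $(X_{V_\alpha}, Y_{E_\alpha}, Y_{i,j}')$ with $|V_\alpha| = 2d - s$ and $E_\alpha \coloneqq \alpha_1(B_1) \cup \alpha_2(B_2)$, and similarly $Z' \coloneqq \xi_{\alpha_1'(A_1,B_1)}^{(i',j')} \xi_{\alpha_2'(A_2,B_2)}^{(i',j')}$ is supported on $V_\alpha' \coloneqq \{\alpha_1'\} \cup \{\alpha_2'\}$ together with $Y_{i',j'}'$.

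Second, I would combine the hypothesis $|V_\alpha \cap (\{\alpha_1'\} \cap \{\alpha_2'\})| \leq 2d - (s+t)$ with $|V_\alpha| = 2d - s$ to locate a vertex $r \in V_\alpha$ that belongs to at most one of $\{\alpha_1'\}, \{\alpha_2'\}$; hence $r$ lies in the support of at most one primed factor. By a case analysis on whether $r \in \alpha_\ell(A_\ell)$ or $r$ appears only as an endpoint of some edge in $\alpha_\ell(B_\ell)$, one can further arrange that $r$ corresponds to a strictly omitted coordinate of $(A_\ell, B_\ell)$ for some $\ell \in \{1,2\}$. Conditioning on the $\sigma$-field $\mathcal{F}$ generated by every random variable except $X_r$ and the $Y$-variables incident to $r$, the factor $\xi_{\alpha_\ell(A_\ell,B_\ell)}^{(i,j)}$ in $Z$ becomes an expectation of $f_{A_\ell,B_\ell}$ with respect to a strict sub-family of its inputs, which vanishes by \cref{lem-psiG}; since the other factor of $Z$ and the whole of $Z'$ are $\mathcal{F}$-measurable by the choice of $r$, this gives $\E \{Z Z'\} = \E \{ \E \{ Z \mvert \mathcal{F}\} \cdot Z' \} = 0$, and applying the same argument to $Z$ alone yields $\E \{Z\} \E \{Z'\} = 0$, so that $\Cov(Z, Z') = 0$ as required.

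The main obstacle will be the combinatorial case analysis underlying the selection of $r$: one must treat separately the cases where $r \in \{i,j\}$ so the swapped edge $(i,j)$ couples the two sides, where $r$ enters $(A_\ell, B_\ell)$ only through edge endpoints in $\alpha_\ell(B_\ell)$ so several $Y$-edges must be conditionally integrated out rather than a single $X_r$, and where the hypothesis forces $r$ into exactly one of $\{\alpha_1'\}, \{\alpha_2'\}$ rather than completely outside $V_\alpha'$. In each sub-case the mechanism is the same, but the bookkeeping must be done carefully to verify that \cref{lem-psiG} can be invoked with a genuinely strict sub-index.
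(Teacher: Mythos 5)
Your core device---integrate out one ``free'' vertex and invoke the orthogonality of \cref{lem-psiG}---is indeed the mechanism the paper uses in one of its cases, but as a complete argument the proposal has a genuine gap: the vertex your hypothesis actually yields does not have the properties your factorization needs, and in boundary configurations no vertex with those properties exists on either side. For the step $\E \{Z Z'\} = \E \{ \E \{ Z \mid \mathcal{F} \} \, Z' \} = 0$ you need $r$ to lie in exactly one of $\{\alpha_1\}, \{\alpha_2\}$ (if $r \in V_0 \coloneqq \{\alpha_1\} \cap \{\alpha_2\}$, both unprimed factors involve variables incident to $r$, so $\E \{Z \mid \mathcal{F}\}$ is a conditional expectation of a product, which \cref{lem-psiG} does not annihilate) and to lie outside $\{\alpha_1'\} \cup \{\alpha_2'\}$ entirely (otherwise $Z'$ is not $\mathcal{F}$-measurable); your selection only guarantees $r \notin \{\alpha_1'\} \cap \{\alpha_2'\}$, and you flag but do not resolve this. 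The obstruction is not bookkeeping: take $d = 3$, $(i,j) = (1,2)$, $\{\alpha_1\} = \{1,2,3\}$, $\{\alpha_2\} = \{1,2,4\}$, $(i',j') = (5,6)$, $\{\alpha_1'\} = \{3,5,6\}$, $\{\alpha_2'\} = \{4,5,6\}$, so $s = t = 2$ and the hypothesis holds (both in the stated form and in the union form $\lvert (\{\alpha_1\} \cup \{\alpha_2\}) \cap (\{\alpha_1'\} \cup \{\alpha_2'\}) \rvert \leq 2d - (s+t)$ that the paper actually uses in the proof of \cref{lem5.3}, here with equality). Every vertex of $\{\alpha_1\} \cup \{\alpha_2\}$ outside $V_0 = \{1,2\}$ belongs to $\{\alpha_1'\} \cup \{\alpha_2'\}$, and symmetrically, so no admissible free vertex exists on either side; the same degeneracy occurs when $s = t = d$, where $\{\alpha_1\} = \{\alpha_2\}$ and there is no vertex outside $V_0$ at all. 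This is exactly why the paper's proof is a case analysis: disjoint index sets are disposed of by plain independence; a strictly subcritical overlap is handled by the free-vertex conditioning you describe (done on the primed side); and the critical overlap with $V_0 \cap V_0' = \emptyset$ (the configuration above) is handled by a different argument, namely conditioning on $X$ and the $Y$-variables indexed by pairs inside $(\{\alpha_1\} \cup \{\alpha_2\} \cup \{\alpha_1'\} \cup \{\alpha_2'\}) \setminus (V_0 \cup V_0')$ to make the two products conditionally independent, and then killing the conditional expectation of the unprimed product by a further conditioning. Your plan contains no substitute for this last case.

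A second flaw is the closing claim that ``applying the same argument to $Z$ alone yields $\E\{Z\}\E\{Z'\} = 0$.'' When $s = d$ (so $\{\alpha_1\} = \{\alpha_2\}$), $\E\{Z\}$ is a second-moment-type quantity and is typically strictly positive, so one must instead show $\E\{Z'\} = 0$, which requires a free vertex on the primed side and again can fail in the critical-overlap configuration. The paper avoids this by choosing the free vertex on the primed side (so that $\E\{Z'\} = 0$ and $\E\{Z Z'\} = 0$ come from the same conditioning) or by invoking independence or conditional independence outright. So both the existence statement for the free vertex and the passage from $\E\{Z Z'\} = 0$ to $\Cov(Z, Z') = 0$ need to be reworked along the lines of the paper's case distinctions before the proposal becomes a proof.
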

\begin{proof}
[Proof of \cref{lem-cov1}]
	Let 
	\begin{equ}
		V_0 & =  \{\alpha_1\} \cap \{\alpha_2\}, & V_1 & = \{\alpha_1\} \setminus V_0, & V_2 & = \{\alpha_2\} \setminus V_0, & s & = \lvert V_0 \rvert, \\
		V_0' & =  \{\alpha_1'\} \cap \{\alpha_2'\}, & V_1' & = \{\alpha_1'\} \setminus V_0', & V_2' & = \{\alpha_2'\} \setminus V_0', 
		 & t &= \lvert V_0' \rvert. 
        \label{l3.0-05}
	\end{equ}
	Then, we have $V_1 \cap V_2 = \emptyset$, $V_1' \cap V_2' = \emptyset$, 
	$2 \leq s,t \leq d$. Without loss of generality, assume that $s \leq t$. 
	
If $2d - (s + t) = 0$, which is equivalent to $s = d, t = d$, then $\{\alpha_1\} = \{\alpha_2\}$ and $\{\alpha_1'\} = \{\alpha_2'\}$. If $\{a_1\} \cap \{a_1'\} = \emptyset$, then $(\xi_{\alpha_1(A_1, B_1)}^{(i,j)}, \xi_{\alpha_2(A_2, B_2)}^{(i,j)})$ and
	$(\xi_{\alpha_1'(A_1, B_1)}^{(i',j')}, \xi_{\alpha_2'(A_2, B_2)}^{(i',j')})$ are independent, which implies that \cref{lc1-a} holds.

If $2d - (s + t) > 0$ and  $ \lvert ( \{\alpha_1\} \cup \{ \alpha_2 \} ) \cap ( \{\alpha_1'\} \cup \{\alpha_2'\} ) \rvert
< 2d - (s + t)$, then there exists $r \in [n]$ such that $r \in (V_1'\cup V_2') \setminus( \{\alpha_1,\alpha_2\} )$. Now, assume that $r \in V_2' \setminus ( \{\alpha_1,\alpha_2\} )$ without loss of generality. Let 
	\begin{equ}
		\mathcal{F}_r = \sigma (X_p, Y_{p,q} , p , q \in [n] \setminus \{r\}) \vee \sigma(Y_{i',j'}').
        \label{l3.0-0F}
	\end{equ}
	Therefore, we have $\xi_{\alpha_1(A_1, B_1)}^{(i,j)}, \xi_{\alpha_2(A_2, B_2)}^{(i,j)},\xi_{\alpha_1'(A_1, B_1)}^{(i',j')} \in \mathcal{F}_r$. 
	Then, by \cref{eq-pp1},  
	\begin{align}
		& \E \{  \xi_{\alpha_2'(A_2, B_2)}^{(i',j')} \mvert \mathcal{F}_r \} \nonumber \\
		& =  \E \Bigl\{ f_{G_1} \bigl(X_{\alpha_2'(A_1, B_1)}; Y_{\alpha_2'(A_1, B_1)}\bigr) - f_{G_1} \bigl(X_{\alpha_2'(A_1, B_1)}; Y_{\alpha_2'(A_1, B_1)}^{(i',j')}	\bigr) \Bigm\vert \mathcal{F}_r \Bigr\}
		\nonumber \\
		& = 0. \label{l3.0-0bb}
	\end{align}
	Hence, 
	\begin{align*}
		\E \Bigl\{ \xi_{\alpha_1'(A_1, B_1)}^{(i',j')} \xi_{\alpha_2'(A_2, B_2)}^{(i',j')} \mvert \mathcal{F}_r \Bigr\} = 0,
		\intertext{which further implies that}\E \Bigl\{\xi_{\alpha_1'(A_1, B_1)}^{(i',j')} \xi_{\alpha_2'(A_2, B_2)}^{(i',j')}\Bigr\} = 0, 
	\end{align*}
	and
	\begin{equ}
		\MoveEqLeft \Cov \bigl\{ \xi_{\alpha_1(A_1, B_1)}^{(i,j)} \xi_{\alpha_2(A_2, B_2)}^{(i,j)},\xi_{\alpha_1'(A_1, B_1)}^{(i',j')} \xi_{\alpha_2'(A_2, B_2)}^{(i',j')} \bigr\}\\
		& = \E \{ \xi_{\alpha_1(A_1, B_1)}^{(i,j)} \xi_{\alpha_2(A_2, B_2)}^{(i,j)}\xi_{\alpha_1'(A_1, B_1)}^{(i',j')} \xi_{\alpha_2'(A_2, B_2)}^{(i',j')} \}\\
		& = \E \Bigl\{ \E \bigl\{  \xi_{\alpha_1(A_1, B_1)}^{(i,j)} \xi_{\alpha_2(A_2, B_2)}^{(i,j)}\xi_{\alpha_1'(A_1, B_1)}^{(i',j')} \xi_{\alpha_2'(A_2, B_2)}^{(i',j')} \mvert \mathcal{F}_r \bigr\} \Bigr\} \\
		& = \E \Bigl\{  \xi_{\alpha_1(A_1, B_1)}^{(i,j)} \xi_{\alpha_2(A_2, B_2)}^{(i,j)}\xi_{\alpha_1'(A_1, B_1)}^{(i',j')}  \E \bigl\{ \xi_{\alpha_2'(A_2, B_2)}^{(i',j')} \mvert \mathcal{F}_r \bigr\} \Bigr\}\\
		& = 0.
        \label{l3.0-01}
	\end{equ}
	If $2d - (s + t) > 0$ and  $\lvert ( \{\alpha_1\} \cup \{ \alpha_2 \} ) \cap ( \{\alpha_1'\} \cap \{\alpha_2'\} ) \rvert
	= 2d - (s + t)$, then either the following two conditions holds: (a) there exists $r \in V_1' \cup V_2'\setminus( \{\alpha_1\} \cup \{\alpha_2\} )$ or (b) $V_0 \cap V_0' = \emptyset$. If (a) holds, then following a similar argument that leading to
	\cref{l3.0-01}, we have 
	\cref{lc1-a} holds. 

	If (b) is true, letting $\mathcal{F} = \sigma ( X , \{Y_{p,q}: p, q \in V_1 \cup V_2 \cup V_1' \cup V_2'\}) $, we have conditional on $\mathcal{F}$, $(\xi_{\alpha_1(A_1, B_1)}^{(i,j)}, \xi_{\alpha_2(A_2, B_2)}^{(i,j)})$ is conditionally independent of $(\xi_{\alpha_1'(A_1, B_1)}
	^{(i',j')}, \xi_{\alpha_2'(A_2, B_2)}^{(i',j')})$, and thus, 
	\begin{multline*}
		\Cov \Bigl\{ \xi_{\alpha_1(A_1, B_1)}^{(i,j)} \xi_{\alpha_2(A_2, B_2)}^{(i,j)}, \xi_{\alpha_1'(A_1, B_1)}^{(i',j')} \xi_{\alpha_2'(A_2, B_2)}^{(i',j')} \Bigr\}\\ 
		= \Cov \biggl\{ \E \Bigl\{ \xi_{\alpha_1(A_1, B_1)}^{(i,j)} \xi_{\alpha_2(A_2, B_2)}^{(i,j)} \mvert \mathcal{F} \Bigr\}, \E \Bigl\{ \xi_{\alpha_1'(A_1, B_1)}^{(i',j')}\xi_{\alpha_2'(A_2, B_2)}^{(i',j')} \mvert \mathcal{F} \Bigr\} \biggr\}. 
	\end{multline*}
	Without loss of generality, we assume that $ V_1 \cup V_2 \cup V_1' \cup V_2' \neq \emptyset$, otherwise the argument is even simpler. Moreover, we may assume that $V_1 \neq \emptyset$.
	Let $\mathcal{F}_0 = \sigma( Y_{i,j}', Y_{p,q} : p, q \in V_0 )$, and we have $\xi_{\alpha_1(A_1, B_1)}^{(i,j)}$ and $\xi_{\alpha_2(A_2, B_2)}^{(i,j)}$ are conditionally independent given $\mathcal{F} \vee \mathcal{F}_0$. Moreover, by \cref{eq-pp1}, 
	\begin{math}
		\E \{ \xi_{\alpha_1(A_1, B_1)}^{(i,j)} \mvert \mathcal{F}\vee \mathcal{F}_0\} = \E \{ \xi_{\alpha_2(A_2, B_2)}^{(i,j)} \mvert \mathcal{F}\vee \mathcal{F}_0\}
		 = 0, 
	\end{math}
	and thus $\E \{ \xi_{\alpha_1(A_1, B_1)}^{(i,j)} \xi_{\alpha_2(A_2, B_2)}^{(i,j)} \mvert \mathcal{F} \} = 0$. Therefore, we have  under the condition (b),
	\begin{align}
		\text{
			$\Cov \{\xi_{\alpha_1(A_1, B_1)}^{(i,j)} \xi_{\alpha_2(A_2, B_2)}^{(i,j)}, \xi_{\alpha_1'(A_1, B_1)}^{(i',j')}\xi_{\alpha_2'(A_2, B_2)}^{(i',j')}\} = 0$. 
		}
		\label{l3.0-0b}
	\end{align}
	Combining \cref{l3.0-01,l3.0-0b} we prove that \cref{lc1-a} holds for $\lvert \{\alpha_1,\alpha_2\} \cap \{\alpha_1',\alpha_2'\} \rvert = 2d - (s + t)$.  This completes the proof. 
\end{proof}

\begin{proof}
	[Proof of \cref{lem5.3}]
	In this proof, we denote by $C$ a constant depending on $k$ and $d$, which may take different values in different places. 
	Note that $2 \leq s , t \leq d$, and 
	\begin{align}
		\MoveEqLeft \Var \Biggl\{ \sum_{(i,j)\in \mathcal{A}_{n,2}} \biggl(\sum_{\alpha_1 \in \mathcal{A}_{n,d}^{(i,j)} } \xi_{\alpha_1(A_1, B_1)}^{(i,j)}  \biggr) \biggl(\sum_{\alpha_2 \in \mathcal{A}_{n,d}^{(i,j)} } \xi_{\alpha_2(A_2, B_2)}^{(i,j)}  \biggr) \Biggr\}\nonumber \\
		& = \sum_{(i,j) \in \mathcal{A}_{n,2} \atop (i',j') \in \mathcal{A}_{n,2}} \sum_{\alpha_1 \in \mathcal{A}_{n, d}^{(i,j)} \atop \alpha_2 \in \mathcal{A}_{n,d}^{(i,j)}} \sum_{\alpha_1' \in \mathcal{A}_{n,d}^{(i',j')} \atop \alpha_2' \in \mathcal{A}_{n,d}^{(i',j')}}   
		\Cov \bigl\{ \xi_{\alpha_1(A_1, B_1)}^{(i,j)} \xi_{\alpha_2(A_2, B_2)}^{(i,j)}, \xi_{\alpha_1'(A_1, B_1)}^{(i',j')} \xi_{\alpha_2'(A_2, B_2)}^{(i',j')}  \bigr\} \nonumber \\
		& = \sum_{s, t = 2}^{d} \sum_{(i,j) \in \mathcal{A}_{n,2} \atop (i',j') \in \mathcal{A}_{n,2}} \sum_{\alpha_1 \in \mathcal{A}_{n,d}^{(i,j)} \atop \alpha_2 \in \mathcal{A}_{n,d}^{(i,j)}} \sum_{\alpha_1' \in \mathcal{A}_{n,d}^{(i',j')} \atop \alpha_2' \in \mathcal{A}_{n,d}^{(i',j')}}  
	 \\ & \quad \times 
		\Cov \bigl\{ \xi_{\alpha_1(A_1, B_1)}^{(i,j)} \xi_{\alpha_2(A_2, B_2)}^{(i,j)}, \xi_{\alpha_1'(A_1, B_1)}^{(i',j')} \xi_{\alpha_2'(A_2, B_2)}^{(i',j')}  \bigr\}
		 \IN{O_{s,t}},
        \label{l3.0-00}
	\end{align}
where $O_{s,t} = \{ \lvert \{ \alpha_1 \} \cap \{\alpha_2\} \rvert = s\} \cap \{ \lvert \{\alpha_1'\} \cap \{\alpha_2'\} \rvert = t \}$. 
If $\lvert ( \{\alpha_1\} \cup \{ \alpha_2 \} ) \cap ( \{\alpha_1'\} \cap \{\alpha_2'\} ) \rvert \leq 2d - (s + t)$, by \cref{lc1-a} in \cref{lem-cov1}, we have 
	\begin{align*}
		\Cov \bigl\{ \xi_{\alpha_1(A_1, B_1)}^{(i,j)} \xi_{\alpha_2(A_2, B_2)}^{(i,j)}, \xi_{\alpha_1'(A_1, B_1)}^{(i',j')} \xi_{\alpha_2'(A_2, B_2)}^{(i',j')}  \bigr\} = 0. 
	\end{align*}
	If $\lvert ( \{\alpha_1\} \cup \{ \alpha_2 \} ) \cap ( \{\alpha_1'\} \cap \{\alpha_2'\} ) \rvert > 2d - (s + t)$, then, recalling that $(\xi_{\alpha_1(A_1, B_1)}^{(i,j)}, \xi_{\alpha_2(A_2, B_2)}^{(i,j)}) \stackrel{d.}{=} (\xi_{\alpha_1'(A_1, B_1)}^{(i',j')}, \xi_{\alpha_2'(A_2, B_2)}^{(i',j')})$, we have  
	\begin{align}
		\MoveEqLeft \bigl\lvert \Cov \{ \xi_{\alpha_1(A_1, B_1)}^{(i,j)} \xi_{\alpha_2(A_2, B_2)}^{(i,j)}, \xi_{\alpha_1'(A_1, B_1)}^{(i',j')} \xi_{\alpha_2'(A_2, B_2)}^{(i',j')} \} \bigr\rvert \nonumber \\ 
		& \leq \E \{ (\xi_{\alpha_1(A_1, B_1)}^{(i,j)})^2 (\xi_{\alpha_2(A_2, B_2)}^{(i,j)})^2\} \nonumber \\
		& \leq C \bigl(\E \{  \func_{A_1,B_1}^4 (X_{ \alpha_1(A_1, B_1)}; Y_{\alpha_1(A_1, B_1) })  \}  + \E \{  \func_{A_1,B_1}^4 (X_{ \alpha_2(A_2, B_2)}; Y_{\alpha_2(A_2, B_2) })  \}\bigr)\nonumber \\	
		& \leq C \tau^4. 		
		\label{l3.0-0C}
	\end{align}
	Therefore, with 
	\begin{align*}
		O_1 = \{ \lvert ( \{\alpha_1\} \cup \{\alpha_2\} ) \cap (\{ \alpha_1' \} \cup \{\alpha_2'\}) \rvert > 2d - (s + t)	\}, 
	\end{align*}
	we have 
	\begin{align*}
		\MoveEqLeft\Var \Biggl\{ \sum_{(i,j)\in \mathcal{A}_{n,2}} \biggl( \sum_{\alpha \in \mathcal{A}_{n,d}^{(i,j)}} \xi_{\alpha(A_1,B_1)}^{(i,j)} \biggr) \biggl( \sum_{\alpha \in \mathcal{A}_{n,d}^{(i,j)}} \xi_{\alpha(A_1,B_1)}^{(i,j)} \biggr)\Biggr\} \\
		& \leq C \tau^4 \sum_{s, t = 0}^{d} \sum_{(i,j) \in \mathcal{A}_{n, 2} \atop (i',j') \in \mathcal{A}_{n,2}} \sum_{\alpha_1 \in \mathcal{A}_{n, d}^{(i,j)} \atop \alpha_2 \in \mathcal{A}_{n,d}^{(i,j)}} \sum_{\alpha_1' \in \mathcal{A}_{n,d}^{(i',j')} \atop \alpha_2' \in \mathcal{A}_{n,d}^{(i',j')}} 
		 \IN
		{ O_1 \cap O_{s,t}}  \\
		& \leq C \tau^4\sum_{s,t = 0}^{d}  n^{ (2d - s) + (2d - t) - (2d - s - t + 1) }\\
		& \leq C n^{2d - 1} \tau^4. \qedhere
	\end{align*}
\end{proof}

\begin{proof}
	[Proof of \cref{lem5.5}]
	If $k < d + 1$, then it follows that $\xi_{\alpha(G)} = 0$ for all $G \in \Gamma_{d + 1}$ and $\alpha \in \mathcal{A}_{n,d + 1}$.
	Therefore, we assume $k \geq d + 1$ without loss of generality. 

	Observe that 
	\begin{equ}
		\MoveEqLeft 
		\Var \Biggl\{ \sum_{(i,j) \in \mathcal{A}_{n,2}} \biggl( \sum_{\alpha_1 \in \mathcal{A}_{n,v_1}^{(i,j)}} \xi_{\alpha_1(A_1, B_1)}^{(i,j)} \biggr) \biggl\lvert \sum_{\alpha_2 \in \mathcal{A}_{n,v_2}^{(i,j)}} \xi_{\alpha_2(A_2, B_2)}^{(i,j)} \biggr\rvert \Biggr\}
		\\
		& = \sum_{(i,j) \in \mathcal{A}_{n,2}} \sum_{(i',j') \in \mathcal{A}_{n,2}}\Cov \Biggl\{ \biggl( \sum_{\alpha_1 \in \mathcal{A}_{n,v_1}^{(i,j)}} \xi_{\alpha_1(A_1, B_1)}^{(i,j)} \biggr) \biggl\lvert \sum_{\alpha_2 \in \mathcal{A}_{n,v_2}^{(i,j)}} \xi_{\alpha_2(A_2, B_2)}^{(i,j)} \biggr\rvert, \\
		& \hspace{5cm} \biggl( \sum_{\alpha_1' \in \mathcal{A}_{n,v_1}^{(i',j')}} \xi_{\alpha_1'(A_1, B_1)}^{(i',j')} \biggr) \biggl\lvert \sum_{\alpha_2' \in \mathcal{A}_{n,v_2}^{(i',j')}} \xi_{\alpha_2'(A_2, B_2)}^{(i',j')} \biggr\rvert \Biggr\}.
        \label{eq-A21}
	\end{equ}
	Letting
	\begin{align*}
		\mathcal{F}_1 = \sigma(X) \vee \sigma ( Y_{p,q}, Y_{p,q}' : \{p,q\} \neq \{i,j\} ), 
	\end{align*}
	and noting that 
	\begin{align*}
		\biggl( \sum_{\alpha_1 \in \mathcal{A}_{n,v_1}^{(i,j)}} \xi_{\alpha_1(A_1, B_1)}^{(i,j)} \biggr) \biggl\lvert \sum_{\alpha_2 \in \mathcal{A}_{n,v_2}^{(i,j)}} \xi_{\alpha_2(A_2, B_2)}^{(i,j)} \biggr\rvert
	\end{align*}
	is anti-symmetric with respect to $(Y_{ij}, Y_{ij}')$, we have 
	\begin{align*}
		\E \Biggl\{ \biggl( \sum_{\alpha_1 \in \mathcal{A}_{n,v_1}^{(i,j)}} \xi_{\alpha_1(A_1, B_1)}^{(i,j)} \biggr) \biggl\lvert \sum_{\alpha_2 \in \mathcal{A}_{n,v_2}^{(i,j)}} \xi_{\alpha_2(A_2, B_2)}^{(i,j)} \biggr\rvert \Biggr\} = 0. 
	\end{align*}
	Now, we consider the following two cases. First, if $\{i,j\} \neq \{i',j'\}$, we have 
	\begin{align*}
		\biggl( \sum_{\alpha_1' \in \mathcal{A}_{n,v_1}^{(i',j')}} \xi_{\alpha_1'(A_1, B_1)}^{(i',j')} \biggr) \biggl\lvert \sum_{\alpha_2' \in \mathcal{A}_{n,v_2}^{(i',j')}} \xi_{\alpha_2'(A_2, B_2)}^{(i',j')} \biggr\rvert \quad \text{is $\mathcal{F}_1$ measurable}
	\end{align*}
	and by anti-symmetry again, 
	\begin{align*}
		\E \Biggl\{ \biggl( \sum_{\alpha_1 \in \mathcal{A}_{n,v_1}^{(i,j)}} \xi_{\alpha_1(A_1, B_1)}^{(i,j)} \biggr) \biggl\lvert \sum_{\alpha_2 \in \mathcal{A}_{n,v_2}^{(i,j)}} \xi_{\alpha_2(A_2, B_2)}^{(i,j)} \biggr\rvert \Biggm\vert \mathcal{F}_1 \Biggr\} = 0.
	\end{align*}
	Therefore, 
	\begin{multline}
			\Cov \Biggl\{ \biggl( \sum_{\alpha_1 \in \mathcal{A}_{n,v_1}^{(i,j)}} \xi_{\alpha_1(A_1, B_1)}^{(i,j)} \biggr) \biggl\lvert \sum_{\alpha_2 \in \mathcal{A}_{n,v_2}^{(i,j)}} \xi_{\alpha_2(A_2, B_2)}^{(i,j)} \biggr\rvert, 
				\\
				 \quad \biggl( \sum_{\alpha_1' \in \mathcal{A}_{n,v_1}^{(i',j')}}
		\xi_{\alpha_1'(A_1, B_1)}^{(i',j')} \biggr) \biggl\lvert \sum_{\alpha_2' \in \mathcal{A}_{n,v_2}^{(i',j')}} \xi_{\alpha_2'(A_2, B_2)}^{(i',j')} \biggr\rvert \Biggr\} = 0
        \label{eqA23}
	\end{multline}
	for $\{i,j\} \neq \{i',j'\}$.

	It suffices to consider the case where $\{i,j\} = \{i',j'\}$. Observe that 
	\begin{equ}
		\MoveEqLeft
		\Cov \Biggl\{ \biggl( \sum_{\alpha_1 \in \mathcal{A}_{n,v_1}^{(i,j)}} \xi_{\alpha_1(A_1, B_1)}^{(i,j)} \biggr) \biggl\lvert \sum_{\alpha_2 \in \mathcal{A}_{n,v_2}^{(i,j)}} \xi_{\alpha_2(A_2, B_2)}^{(i,j)} \biggr\rvert, \biggl( \sum_{\alpha_1' \in \mathcal{A}_{n,v_1}^{(i,j)}}
		\xi_{\alpha_1'(A_1, B_1)}^{(i,j)} \biggr) \biggl\lvert \sum_{\alpha_2' \in \mathcal{A}_{n,v_2}^{(i,j)}} \xi_{\alpha_2'(A_2, B_2)}^{(i,j)} \biggr\rvert \Biggr\}\\
		& = \E \Biggl\{ \biggl( \sum_{\alpha_1 \in \mathcal{A}_{n,v_1}^{(i,j)}} \xi_{\alpha_1(A_1, B_1)}^{(i,j)} \biggr) \biggl( \sum_{\alpha_1' \in \mathcal{A}_{n,v_1}^{(i,j)}}
		\xi_{\alpha_1'(A_1, B_1)}^{(i,j)} \biggr) \biggl\lvert \biggl(\sum_{\alpha_2 \in \mathcal{A}_{n,v_2}^{(i,j)}} \xi_{\alpha_2(A_2, B_2)}^{(i,j)}\biggr) \biggl(\sum_{\alpha_2' \in \mathcal{A}_{n,v_2}^{(i,j)}} \xi_{\alpha_2'(A_2, B_2)}^{(i,j)}\biggr) \biggr\rvert \Biggr\}
		\\
		& = \sum_{\alpha_1 \in \mathcal{A}_{n,v_1}^{(i,j)}} \sum_{\alpha_1' \in \mathcal{A}_{n,v_1}^{(i,j)}}\E \Biggl\{   \xi_{\alpha_1(A_1, B_1)}^{(i,j)}   
		\xi_{\alpha_1'(A_1, B_1)}^{(i,j)}  \biggl\lvert \sum_{\alpha_2 \in \mathcal{A}_{n,v_2}^{(i,j)}} \sum_{\alpha_2' \in \mathcal{A}_{n,v_2}^{(i,j)}} \xi_{\alpha_2(A_2, B_2)}^{(i,j)}  \xi_{\alpha_2'(A_2, B_2)}^{(i,j)} \biggr\rvert \Biggr\}
		.
        \label{eqA22}
	\end{equ}
	Let $H_1 = \{\alpha_1\} \setminus \{\alpha_1'\}$ and $H_1' = \{\alpha_1'\}\setminus \{\alpha_1\}$. 
	Let $t = \lvert \alpha_1 \cap \alpha_1' \rvert$, and then we have $2 \leq t \leq v_1$. Now, as 
	\begin{align*}
		\sum_{\alpha_2 \in \mathcal{A}_{n,v_2}^{(i,j)}}\sum_{\alpha_2' \in \mathcal{A}_{n,v_2}^{(i,j)}} \xi_{\alpha_2(A_2, B_2)}^{(i,j)}  \xi_{\alpha_2'(A_2, B_2)}^{(i,j)} & =  \sum_{\alpha_2, \alpha_2' \in \mathcal{A}_1} \xi_{\alpha_2(A_2, B_2)}^{(i,j)} \xi_{\alpha_2'(A_2, B_2)}^{(i,j)}
		\\
		& \quad +
		\sum_{\alpha_2, \alpha_2' \in \mathcal{A}_2} \xi_{\alpha_2(A_2, B_2)}^{(i,j)} \xi_{\alpha_2'(A_2, B_2)}^{(i,j)}, 
	\end{align*}
	where $\mathcal{A}_1 = \{\alpha_2,\alpha_2' \in \mathcal{A}_{n,v_2}^{(i,j)}: (H_1 \cup H_1') \setminus \{ \alpha_2, \alpha_2' \} \neq \emptyset  \}$ and $\mathcal{A}_2 = \{\alpha_2,\alpha_2' \in
	\mathcal{A}_{n,v_2}^{(i,j)}: (H_1 \cup H_1') \setminus \{ \alpha_2, \alpha_2' \} = \emptyset\}$.
	If there exists $r \in (H_1 \cup H_1') \setminus \{\alpha_2, \alpha_2'\}$, letting $\mathcal{F}_r = \sigma(X_p, Y_{p,q}, Y_{p,q}' : p,q \in [n] \setminus \{r\})$, then we have 
	\begin{align*}
		\sum_{\alpha_2, \alpha_2' \in \mathcal{A}_1} \xi_{\alpha_2(A_2, B_2)}^{(i,j)} \xi_{\alpha_2'(A_2, B_2)}^{(i,j)} \in \mathcal{F}_r, 
	\end{align*}
	and by orthogonality, we have 
	\begin{align*}
		\IE \bigl\{ \xi_{\alpha_1(A_1, B_1)}^{(i,j)}   
		\xi_{\alpha_1'(A_1, B_1)}^{(i,j)} \vert \mathcal{F}_r \bigr\} = 0. 
	\end{align*}
	Therefore, we have 
	\begin{align*}
		\E \Biggl\{   \xi_{\alpha_1(A_1, B_1)}^{(i,j)}   
		\xi_{\alpha_1'(A_1, B_1)}^{(i,j)}  \biggl\lvert \sum_{\alpha_2, \alpha_2' \in \mathcal{A}_1}  \xi_{\alpha_2(A_2, B_2)}^{(i,j)}  \xi_{\alpha_2'(A_2, B_2)}^{(i,j)} \biggr\rvert \Biggr\} = 0. 
	\end{align*}
	Hence, by Cauchy's inequality, we have 
	\begin{align*}
		\MoveEqLeft 
		\Biggl\lvert \E \Biggl\{   \xi_{\alpha_1(A_1, B_1)}^{(i,j)}   
		\xi_{\alpha_1'(A_1, B_1)}^{(i,j)}  \biggl\lvert \sum_{\alpha_2, \alpha_2' \in \mathcal{A}_{n,v_2}^{(i,j)}}  \xi_{\alpha_2(A_2, B_2)}^{(i,j)}  \xi_{\alpha_2'(A_2, B_2)}^{(i,j)} \biggr\rvert \Biggr\} \Biggr\rvert\\
		& \leq \E \Biggl\{   \bigl\lvert \xi_{\alpha_1(A_1, B_1)}^{(i,j)}   
		\xi_{\alpha_1'(A_1, B_1)}^{(i,j)} \bigr\rvert  \biggl\lvert \sum_{\alpha_2, \alpha_2' \in \mathcal{A}_{2}}  \xi_{\alpha_2(A_2, B_2)}^{(i,j)}  \xi_{\alpha_2'(A_2, B_2)}^{(i,j)} \biggr\rvert \Biggr\}\\
		& \leq C \tau^2 \sqrt{\E \Biggl\{   \biggl\lvert \sum_{\alpha_2, \alpha_2' \in \mathcal{A}_{2}}  \xi_{\alpha_2(A_2, B_2)}^{(i,j)}  \xi_{\alpha_2'(A_2, B_2)}^{(i,j)} \biggr\rvert^2 \Biggr\}}.
	\end{align*}
	Following the similar argument in the proof of \cref{lem5.3}, and recalling that $\{\alpha_1 \cap \alpha_1'\} = t$ and $\lvert \mathcal{A}_2 \rvert \leq Cn^{2( t-2)}( n^{v_2 - v_1} \vee 1 )$, we have  
	\begin{align*}
		\E \Biggl\{   \biggl\lvert \sum_{\alpha_2, \alpha_2' \in \mathcal{A}_{2}}  \xi_{\alpha_2(A_2, B_2)}^{(i,j)}  \xi_{\alpha_2'(A_2, B_2)}^{(i,j)} \biggr\rvert^2 \Biggr\} & \leq C n^{2( t-2)}( n^{v_2 - v_1} \vee 1 )\tau^4. 
	\end{align*}
	Therefore, we have 
	\begin{align*}
		\Biggl\lvert \E \Biggl\{   \xi_{\alpha_1(A_1, B_1)}^{(i,j)}   
		\xi_{\alpha_1'(A_1, B_1)}^{(i,j)}  \biggl\lvert \sum_{\alpha_2, \alpha_2' \in \mathcal{A}_{n,v_2}^{(i,j)}}  \xi_{\alpha_2(A_2, B_2)}^{(i,j)}  \xi_{\alpha_2'(A_2, B_2)}^{(i,j)} \biggr\rvert \Biggr\} \Biggr\rvert\leq C n^{2( t-2)}( n^{v_2 - v_1} \vee 1 )\tau^4. 
	\end{align*}
	Substituting the foregoing inequality to \cref{eqA22}, we have 
	\begin{multline}
		\sum_{(i,j) \in \mathcal{A}_{n,2}}\Cov \Biggl\{ \biggl( \sum_{\alpha_1 \in \mathcal{A}_{n,v_1}^{(i,j)}} \xi_{\alpha_1(A_1, B_1)}^{(i,j)} \biggr) \biggl\lvert \sum_{\alpha_2 \in \mathcal{A}_{n,v_2}^{(i,j)}} \xi_{\alpha_2(A_2, B_2)}^{(i,j)} \biggr\rvert, \\
			\biggl( \sum_{\alpha_1' \in \mathcal{A}_{n,v_1}^{(i,j)}}
		\xi_{\alpha_1'(A_1, B_1)}^{(i,j)} \biggr) \biggl\lvert \sum_{\alpha_2' \in \mathcal{A}_{n,v_2}^{(i,j)}} \xi_{\alpha_2'(A_2, B_2)}^{(i,j)} \biggr\rvert \Biggr\}\\
		\leq C n^{2\max \{v_1,v_2\} - 2} \tau^4. 
		\label{eqA24}
	\end{multline}
	By \cref{eq-A21,eqA23,eqA24}, we complete the proof. 

\end{proof}

\subsection{Proof of \cref{lem5.4}}%
\label{sub:a3}

\cref{lem5.4} follows from a similar argument as that in the proof of \cref{lem5.3} and the following lemma. Let $\tilde{\mathcal{G}}_{f,\ell} = \{ (A, B) \in \mathcal{G}_{f,\ell} : G_{A,B} \text{ is strongly connected.} \}$ Now, as the function $g$ does not depend on $X$, we set
$A_m = \emptyset$ in the following lemma. With a slight abuse of notation, For $j = 1, 2$ and for $B_m \subset [k]_2$, let $G_m$ be the graph generated by $B_m$ and let $v_m$  be the number of nodes of $G_{m}$, and we write $B_m \in \mathcal{G}$ if $G_m \in \mathcal{G}$. 
\begin{lemma}
	\label{lem-cova3}
	Let $B_m\in \tilde{\mathcal{G}}_{f,d}
\cup \mathcal{G}_{f,d+ 1}$ for $m = 1, 2$. 
	Let $(i,j), (i',j') \in \mathcal{A}_{n,2}$, and let $\alpha_m \in \mathcal{A}_{n,v_m}^{(i,j)}$, $\alpha_m' \in \mathcal{A}_{n,v_m}^{(i',j')}$ for $m = 1, 2$. Let $s = \lvert \{\alpha_1\} \cap \{\alpha_2\} \rvert$ and $t = \lvert \{\alpha_1'\} \cap \{\alpha_2'\} \rvert$.
	For $m = 1, 2$, let
	$\gamma_m$ indicate that $B_m \in \tilde{\mathcal{G}}_{f,d} \cup \tilde{\mathcal{G}}_{f,d + 1}$. Then
	\begin{equ}
		\Cov \Bigl\{ \eta_{\alpha_1(B_1)}^{(i,j)} \eta_{\alpha_2(B_2)}^{(i,j)},\eta_{\alpha_1'( B_1)}^{(i',j')} \eta_{\alpha_2'(B_2)}^{(i',j')} \Bigr\} = 0
        \label{eq-a31}
	\end{equ}
	for $\lvert \{\alpha_1, \alpha_2\} \cap \{\alpha_1', \alpha_2'\} \rvert < v_1 + v_2 + \gamma_1 + \gamma_2 - (s + t)$.
\end{lemma}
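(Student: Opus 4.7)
The plan is to adapt the proof of \cref{lem-cov1}, accommodating both the variable vertex counts $v_m \in \{d, d+1\}$ and the strong-connectivity flags $\gamma_m$. Introduce the same vertex partition $V_0 = \{\alpha_1\} \cap \{\alpha_2\}$, $V_m = \{\alpha_m\} \setminus V_0$ for $m = 1, 2$, with primed analogues $V_0', V_1', V_2'$, where $|V_0| = s$ and $|V_0'| = t$. A direct count turns the hypothesis into the statement that the primed side has at least $s + 1 - \gamma_1 - \gamma_2$ vertices outside the unprimed side and, symmetrically, the unprimed side has at least $t + 1 - \gamma_1 - \gamma_2$ vertices outside the primed side.

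When $\gamma_1 = \gamma_2 = 0$, the hypothesis strictly strengthens that of \cref{lem-cov1} (with $2d$ replaced by $v_1 + v_2$), so the same conditioning works: assuming $s \leq t$ without loss of generality, pick $r \in (V_1' \cup V_2') \setminus (\{\alpha_1\} \cup \{\alpha_2\})$, condition on $\mathcal{F}_r = \sigma(X, Y_{p,q}, Y_{p,q}' : p, q \neq r)$, and invoke \cref{lem-psiG} to get $\E\{\eta_{\alpha_m'(B_m)}^{(i',j')} \mvert \mathcal{F}_r\} = 0$ for the unique $m$ with $r \in \{\alpha_m'\}$. The remaining three $\eta$-factors are $\mathcal{F}_r$-measurable, so the full expectation --- and hence the covariance --- vanishes. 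A mirror argument, taking a free vertex $r \in V_1 \cup V_2$ outside the primed side, handles $s > t$.

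The substantive new content is the regime $\gamma_1 + \gamma_2 \geq 1$, where the looser bound can trap every free vertex of the primed side inside $V_0'$, shared between $\{\alpha_1'\}$ and $\{\alpha_2'\}$, so that single-vertex conditioning no longer decouples the product $\eta_{\alpha_1'(B_1)}^{(i',j')} \eta_{\alpha_2'(B_2)}^{(i',j')}$. Strong connectivity enters here: for a strongly connected $B_m$ with $v_m \geq 3$, the subgraph $B_m^{(r)}$ is connected for every $r \in V(B_m)$, and hence the edge set $B_m \cap (\{r\} \times [k])$ of edges incident to $r$ is a strict subset of $B_m$. \cref{lem-psiG} then forces $\E\{g_{B_m}(Y_{\alpha_m'(B_m)}) \mvert Y_{\alpha_m'(r),\,\cdot}\} = 0$, a stronger orthogonality than mere connectedness supplies, and this is precisely what furnishes the extra unit of slack per $\gamma_m = 1$. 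I would pick a free vertex $r \in V_0'$ and apply this enhanced orthogonality (once per strongly connected factor) via nested conditioning --- first on $\mathcal{F}_r$, then integrating out $Y_{\alpha_m'(r),\,\cdot}$ using the connectedness of $B_m^{(r)}$ --- to drive the joint expectation of all four $\eta$-factors to zero.

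The main obstacle will be making the nested conditioning genuinely close the product expectation, not merely the individual factors: because both $\eta_{\alpha_1'(B_1)}^{(i',j')}$ and $\eta_{\alpha_2'(B_2)}^{(i',j')}$ depend on the shared variables $Y_{\alpha_m'(r),\,\cdot}$, their conditional independence given $\mathcal{F}_r$ fails in general, and the enhanced Hoeffding identity must be deployed at the right step in the iterated conditioning so that one factor is driven to zero in $L^1$ before the remaining integration is carried out. Once this mechanism is in place, the bookkeeping over the configurations $(s, t, \gamma_1, \gamma_2)$ and the primed/unprimed symmetry parallels that of \cref{lem-cov1}, and the small-$v_m$ edge case (strongly connected edge with $v_m = 2$) is absorbed into either the Lemma \ref{lem-cov1}-style argument or a direct check.
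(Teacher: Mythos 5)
Your skeleton agrees with the paper's up to the point where the new hypothesis actually matters: reuse the \cref{lem-cov1} machinery (pick a vertex of the primed side outside the unprimed side, condition on everything else, kill one factor with \cref{lem-psiG}) for low overlap, and invoke strong connectivity only to earn the extra $\gamma_1+\gamma_2$ units of overlap. The genuine gap is that the one configuration which requires strong connectivity is exactly the one you leave unresolved. At overlap $v_1+v_2-(s+t)+1$ with $\gamma_1=\gamma_2=1$, the remaining case (the paper's case (c)) is that $V_0\cap V_0'=\{r\}$ is a single vertex shared by the two intersections; there is then no vertex free of the other side to condition on, and, as you yourself concede, conditioning on $\mathcal{F}_r$ does not decouple the products. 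Saying the ``enhanced Hoeffding identity must be deployed at the right step in the iterated conditioning'' names the difficulty rather than resolving it, and your actual use of strong connectivity --- the edges of $B_m$ incident to $r$ form a proper subset of $B_m$, so the conditional expectation of a \emph{single} factor given those edge variables vanishes --- annihilates one $\eta$, not the expectation of the product of four dependent $\eta$'s, which is the whole issue. (Also, calling $r\in V_0'$ a ``free vertex'' is a misnomer: the relevant $r$ is the unique common vertex of $V_0$ and $V_0'$, not a vertex outside the other pair.) As written, the proposal only proves the lemma in the regimes already covered by the \cref{lem-cov1}-type argument, i.e.\ it never uses $\gamma_1+\gamma_2$.

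The paper closes this case by a two-stage conditioning absent from your sketch. First it conditions on $\mathcal{F}_2$, generated by all edge variables $Y_{p,q},Y_{p,q}'$ whose endpoints lie in $V_1\cup V_2\cup V_1'\cup V_2'$ or join such a vertex to $r$; given $\mathcal{F}_2$ the unprimed product $\eta_{\alpha_1(B_1)}^{(i,j)}\eta_{\alpha_2(B_2)}^{(i,j)}$ and the primed product are conditionally independent, so the covariance reduces to a covariance of conditional expectations. Then strong connectivity enters differently from your version: since $G_1^{(r)}$ is connected and $V_0\setminus\{r\}$ and $V_1$ are both nonempty, $B_1$ placed on $\alpha_1$ contains an edge joining $V_0\setminus\{r\}$ to $V_1$ which is not measured after further conditioning on the edges from $V_0\setminus\{r\}$ to $V_2\cup\{r\}$ (the paper's $\mathcal{F}_3$); \cref{lem-psiG} then gives $\E\{\eta_{\alpha_1(B_1)}^{(i,j)}\mid\mathcal{F}_2\vee\mathcal{F}_3\}=0$ while $\eta_{\alpha_2(B_2)}^{(i,j)}$ remains measurable, hence $\E\{\eta_{\alpha_1(B_1)}^{(i,j)}\eta_{\alpha_2(B_2)}^{(i,j)}\mid\mathcal{F}_2\}=0$ and \cref{eq-a31} follows. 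You would need to supply this (or an equivalent) decoupling step. Two further points to tidy up: when $\gamma_1+\gamma_2=1$ the hypothesis already admits overlap equal to $v_1+v_2-(s+t)$, which the paper handles at the threshold with plain connectivity (its cases (a)/(b), the analogue of case (b) in \cref{lem-cov1}), whereas your $\gamma_1=\gamma_2=0$ discussion covers only the strict sub-threshold regime; and since $g$ does not depend on $X$, independence of the two products needs only that the index sets share at most one vertex, a simplification the paper uses in the degenerate case $v_1+v_2=s+t$.
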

\begin{proof}
	The proof is similar to that of \cref{lem-cov1}.
	
	Let $V_0, V_0', V_1, V_1',V_2,V_2'$ be defined as in \cref{l3.0-05}. Note that if $G_B$ has isolated nodes, then $\eta_{\alpha(B)} = 0$ for all $\alpha \in \mathcal{A}_{n, v_{B}}$, where $v_{B}$ is the number of nodes of the graph generated by the index set $B$. If $v_1 + v_2 = s + t$, then it follows that $\{\alpha_1\} = \{\alpha_2\}$ and $\{\alpha_1'\}
	= \{\alpha_2'\}$. If $\lvert \{\alpha_1\} \cap \{\alpha_1'\} \rvert < 2$, then $\eta_{\alpha_1(B_1)}^{(i,j)} \eta_{\alpha_2(B_2)}^{(i,j)}$ and $\eta_{\alpha_1'(B_1)}^{(i',j')} \eta_{\alpha_2'(B_2)}^{(i',j')}$ are independent, which further implies that 
	\cref{eq-a31} holds. 

	Now we consider the case where $v_1 + v_2 > s + t$. 
	If $\lvert \{ \alpha_1 , \alpha_2 \} \cap \{\alpha_1', \alpha_2'\} \rvert < v_1 + v_2 - (s + t)$, then following the same argument as that leading to \cref{l3.0-01}, we have \cref{eq-a31} holds. 
	
	If $G_1$ is connected and $\lvert \{ \alpha_1 , \alpha_2 \} \cap \{\alpha_1', \alpha_2'\} \rvert = v_1 + v_2 - (s + t)$, then either the following two conditions holds: (a) there exists $r \in V_2'\setminus( \{\alpha_1\} \cup \{\alpha_2\} \cup V_0' \cup V_1')$ or (b) $V_0 \cap V_0' = \emptyset$. If (a)
	holds, then following a similar argument as before, we have 
	\cref{eq-a31} holds. Now we consider that the case where (b) holds. Let
	$H_1 = \{ (p,q) : p \in V_0, q \in V_1 \}$ and 
	\begin{align*}
		\mathcal{F}_1 = \sigma ( Y_{p,q}, Y_{p,q}',  : \mathcal{A}_{n,2} \setminus H_1	 ). 
	\end{align*}
	By orthogonality, we have $\E  \{ \eta_{\alpha_1(B_1)}^{(i,j)}  \vert \mathcal{F}_1  \}	= 0$.

	Note that $\eta_{\alpha_2(B_2)}, \eta_{\alpha_1'(B_1)}, \eta_{\alpha_2'(B_2)} \in \mathcal{F}_1$, we have 
	\begin{align*}
		\MoveEqLeft \E \Bigl\{ \eta_{\alpha_1(B_1)}^{(i,j)} \eta_{\alpha_2(B_2)}^{(i,j)} \Bigr\}\\
		& = \E \Bigl\{ \eta_{\alpha_2(B_2)}^{(i,j)} \E  \Bigl\{ \eta_{\alpha_1(B_1)}^{(i,j)}  \Bigm\vert \mathcal{F}_1 \Bigr\}\Bigr\} = 0, \\
		\MoveEqLeft
		\Cov \Bigl\{ \eta_{\alpha_1(B_1)}^{(i,j)} \eta_{\alpha_2(B_2)}^{(i,j)},\eta_{\alpha_1'(B_1)}^{(i',j')} \eta_{\alpha_2'(B_2)}^{(i',j')} \Bigr\}\\
		&  = \E \Bigl\{\E  \Bigl\{ \eta_{\alpha_1(B_1)}^{(i,j)} \eta_{\alpha_2(B_2)}^{(i,j)}\eta_{\alpha_1'(B_1)}^{(i',j')} \eta_{\alpha_2'(B_2)}^{(i',j')} \Bigm\vert \mathcal{F}_1 \Bigr\}\\
		& = \E \Bigl\{ \eta_{\alpha_2(B_2)}^{(i,j)}\eta_{\alpha_1'(B_1)}^{(i',j')} \eta_{\alpha_2'(B_2)}^{(i',j')} \E  \Bigl\{ \eta_{\alpha_1(B_1)}^{(i,j)}  \Bigm\vert \mathcal{F}_1  \Bigr\}\Bigr\}	= 0 . 
	\end{align*}
	This proves \cref{eq-a31} for the case where $\lvert \{ \alpha_1 , \alpha_2 \} \cap \{\alpha_1', \alpha_2'\} \rvert = v_1 + v_2 - (s + t)$.

	Now, we further assume that $\gamma_1 = \gamma_2 = 1$. If $G_1$ or $G_2$ is a graph containing one single edge, then the proof is even simpler. Without loss of generality, we now assume that $G_m^{(r)}$ is connected for every $r \in [n]$ for $m = 1, 2$. We then
	prove that \cref{eq-a31} holds when $\lvert \{ \alpha_1 , \alpha_2 \} \cap \{\alpha_1', \alpha_2'\} \rvert = v_1 + v_2 - (s + t) + 1$. Under this condition, additional to (a) and (b), there is still another
	event that may happen: (c) there exists $r \in [n]$ such that $\{r\} = V_0 \cap V_0'$. As the cases (a) and (b) have been discussed, we only need to prove that \cref{eq-a31} holds under (c).

	As $\{i,j\} \subset V_0$, we have $s \geq 2$, and $V_0 \setminus \{r\}$ is not empty. 
	Let 
	\begin{align*}
		\mathcal{F}_2 = \sigma\{ Y_{p,q}, Y_{p,q}' : p \in V_1 \cup V_2 \cup V_1' \cup V_2', q \in V_1 \cup V_2 \cup V_1' \cup V_2' \cup \{r\} \}.
	\end{align*}
	Then, conditional on $\mathcal{F}_2$, we have $\eta_{\alpha_1(B_1)}^{(i,j)} \eta_{\alpha_2(B_2)}^{(i,j)}$ and $\eta_{\alpha_1'(B_1)}^{(i',j')} \eta_{\alpha_2'(B_2)}^{(i',j')}$ are conditionally independent. Hence, 
	\begin{align*}
		\MoveEqLeft
		\Cov \Bigl\{ \eta_{\alpha_1(B_1)}^{(i,j)} \eta_{\alpha_2(B_2)}^{(i,j)},\eta_{\alpha_1'(B_1)}^{(i',j')} \eta_{\alpha_2'(B_2)}^{(i',j')} \Bigr\}\\
		& = \Cov \Bigl\{ \E \{\eta_{\alpha_1(B_1)}^{(i,j)} \eta_{\alpha_2(B_2)}^{(i,j)} \vert \mathcal{F}_2 \}, \E \{ \eta_{\alpha_1'(B_1)}^{(i',j')} \eta_{\alpha_2'(B_2)}^{(i',j')} \vert \mathcal{F}_2\} \Bigr\}. 
	\end{align*}
	Letting 
	\begin{align*}
		\mathcal{F}_3 = \sigma\{ Y_{p,q}, Y_{p,q} : p \in V_0 \setminus \{r\}, q \in V_2 \cup \{r\} \}. 
	\end{align*}
	Now, if $G_1^{(r)}$ is connected for every $r \in [n]$, there is at least one edge in $G_1$ connecting $V_0 \setminus \{r\}$ and $V_1$, and thus 
	\begin{align*}
		\E \{\eta_{\alpha_1(B_1)}^{(i,j)} \eta_{\alpha_2(B_2)}^{(i,j)} \vert \mathcal{F}_2 \vee \mathcal{F}_3  \} 
		& = \eta_{\alpha_2(B_2)}^{(i,j)}\E \{\eta_{\alpha_1(B_1)}^{(i,j)}  \vert \mathcal{F}_2 \vee \mathcal{F}_3 \} = 0, 
	\end{align*}
	where the last equality follows from orthogonality. 	 
	Noting that $\mathcal{F}_2 \subset \mathcal{F}_3$, then $\E \{\eta_{\alpha_1(B_1)}^{(i,j)} \eta_{\alpha_2(B_2)}^{(i,j)} \vert \mathcal{F}_2 \} = 0$ and thus 
	\cref{eq-a31} holds.

\end{proof}

\section*{Acknowledgements}
The research is supported by Singapore Ministry of Education Academic Research Fund MOE 2018-T2-076.

\setlength{\bibsep}{0.5ex}
\def\bibfont{\small}


\end{document}